
\documentclass[12pt]{amsart}

\usepackage{amssymb}
\usepackage{graphicx}
\usepackage{fullpage}

\usepackage{colortbl}
\providecommand{\mk}{\cellcolor[gray]{.75}}

\newtheorem{theorem}{Theorem}[section]
\theoremstyle{plain}

\newtheorem{corollary}[theorem]{Corollary}

\newtheorem{lemma}[theorem]{Lemma}

\newtheorem{proposition}[theorem]{Proposition}
\newtheorem{remark}[theorem]{Remark}

\numberwithin{equation}{section}

\def\spacer{\vline depth 0pt height 10pt width 0pt}

%%%%%%%%%%%% NOTATION %%%%%%%%%%%%%%%%%
\def\ga{\circ}
\def\gb{\ast}
\def\gc{\cdot} % fix spacing !!!
\def\gd{\bullet}
\newcommand{\diff}{\mathop{\mathrm{diff}}}
\newcommand{\dist}{\mathop{\mathrm{dist}}}
\def\cst{\delta} % Cayley stability
\def\cnb{\Delta} % Cayley neighborhood
\def\sdp{\rtimes}
% parastrophes

\def\swr{\hbox{$\setminus\!\!\setminus$}}

\def\aut{\mathrm{Aut}}
\def\dom{\mathrm{Dom}}
\def\im{\mathrm{Im}}

\def\Dic{\mathrm{Dic}}

\def\lref#1{Lemma~$\ref{#1}$}
\def\pref#1{Proposition~$\ref{#1}$}
\def\tref#1{Theorem~$\ref{#1}$}
\def\fref#1{Figure~$\ref{#1}$}
\def\cref#1{Corollary~$\ref{#1}$}

\renewcommand{\ge}{\geqslant}
\renewcommand{\le}{\leqslant}

% in tables with results
\def\good#1{\underline{#1}}
\def\best#1{\underline{\mathbf{#1}}}
\def\narrowcols{\addtolength{\arraycolsep}{-1.5pt}}
\def\normalcols{\addtolength{\arraycolsep}{1.5pt}}

%%%%%%%%% END OF NOTATION %%%%%%%%%%%%%

%\pagestyle{empty}

\title{Closest multiplication tables of groups}

\author{Petr Vojt\v{e}chovsk\'y}
\address[Vojt\v{e}chovsk\'y]{Department of Mathematics, University of Denver, Colorado 80208, USA}
\email[Vojt\v{e}chovsk\'y]{petr@math.du.edu}

\author{Ian M. Wanless}
\address[Wanless]{School of Mathematical Sciences, Monash University, VIC 3800 Australia}
\email[Wanless]{ian.wanless@monash.edu}

\begin{document}

\begin{abstract}
Suppose that all groups of order $n$ are defined on the same set $G$ of cardinality $n$, and let the \emph{distance} of two groups of order $n$ be the number of pairs $(a,b)\in G\times G$ where the two group operations differ. Given a group $G(\ga)$ of order $n$, we find all groups of order $n$, up to isomorphism, that are closest to $G(\ga)$.
\end{abstract}

\keywords{Group multiplication table, Hamming distances of groups, rainbow matching}

\subjclass[2010]{05B15, 20D60}

\thanks{This work was partially supported by a grant from the Simons Foundation (grant 210176 to Petr Vojt\v{e}chovsk\'y) and by the Australian Research Council (grants DP0662946 and DP1093320 to Ian Wanless).}

\maketitle

\section{Introduction}

Let $G$ be a finite set of cardinality $n$, and let $\ga$, $\gb$, $\gc$, $\gd$ be group operations defined on $G$. For groups $G(\ga)$, $G(\gb)$, let
\begin{align*}
    \diff(\ga,\gb) &= \{(a,b)\in G\times G;\;a\ga b\ne a\gb b\},\\
    \dist(\ga,\gb) &= |\diff(\ga,\gb)|,
\end{align*}
and call $\dist(\ga,\gb)$ the \emph{(Hamming) distance of groups $G(\ga)$, $G(\gb)$}.

In a research programme spanning two decades, Ale\v{s} Dr\'apal showed that there is a strong relationship between algebraic properties of groups and their distances, as will become apparent from many of his results we quote below.

In this paper we solve the following problem: \emph{Given a group $G(\ga)$, determine all multiplication tables of groups $G(\gb)$
(up to isomorphism) that are as close to the multiplication table of $G(\ga)$ as possible.}
More formally, let
\begin{align*}
    \cst(\ga) &= \min\{\dist(\ga,\gb);\;G(\ga)\ne G(\gb)\},\\
    \cnb(\ga) &= \{G(\gb);\;\dist(\ga,\gb) = \cst(\ga)\}.
\end{align*}
Our task is then to find $\cst(\ga)$ and to construct one group $G(\gb)$ of minimum distance from $G(\ga)$ for every isomorphism class of groups intersecting $\cnb(\ga)$.\

In particular, we determine the minimal distance
\begin{align*}
    \cst(n) = \min\{\cst(\ga);\;G(\ga)\text{ is a group of order }n\}
\end{align*}
and all pairs of groups $G(\ga)$, $G(\gb)$ (up to isomorphism) of order $n$ satisfying $\dist(\ga,\gb)=\cst(n)$.

\subsection{The context}

Let
\begin{align*}
    \cst_{\cong}(\ga) &= \min\{\dist(\ga,\gb);\;G(\ga)\cong G(\gb)\ne G(\ga)\},\\
    \cst_{\not\cong}(\ga) &= \min\{\dist(\ga,\gb);\;G(\ga)\not\cong G(\gb)\},
\end{align*}
where the second quantity is set to $\infty$ if all groups of order $n$ are isomorphic. Obviously, we have $\cst(\ga) = \min\{\cst_{\cong}(\ga),\,\cst_{\not\cong}(\ga)\}$.

An important threshold for $\cst(\ga)$ is obtained by considering pairs of groups isomorphic via a transposition. Note that if $f=(a,b)$ is an isomorphism between $G(\ga)$ and $G(\gb)$ then $\diff(\ga,\gb)$ is a subset of the rows and columns indexed by $a$, $b$, and of the ``diagonal'' entries $(x,y)$ with $x\ga y\in\{a,b\}$. This means that $\delta(n)$ will not exceed $6n$. More precisely:

As in \cite{DrEJC}, for a nontrivial commutative group $O$ of odd order, let $D(O)$ be the generalized dihedral group defined on $O\times C_2$ by
\begin{displaymath}
    (a,0)(b,h) = (ab,h),\quad (a,1)(b,h) = (ab^{-1},1+h).
\end{displaymath}
Then let
\begin{equation}\label{Eq:Delta0}
    \cst_0(\ga) = \left\{\begin{array}{ll}
        6n-18,&\text{if $n$ is odd},\\
        6n-20,&\text{if $G(\ga)\cong D(O)$ for some $O$},\\
        6n-24,&\text{otherwise}.
    \end{array}\right.
\end{equation}

The main results of \cite{DrEJC} can be summarized as follows:

\begin{theorem}[Dr\'apal]\label{Th:DrEJC} Let $|G|=n$ and let $G(\ga)$, $G(\gb)$ be groups defined on $G$. If $\dist(\ga,\gb)<n^2/9$ then $G(\ga)$ and $G(\gb)$ are isomorphic. If $n\ge 5$ then $\dist(\ga,\gb)\ge\cst_0(\ga)$ whenever $G(\gb)$ is isomorphic to $G(\ga)$ via a transposition, and $\dist(\ga,\gb) = \cst_0(\ga)$ for some $G(\gb)$ isomorphic to $G(\ga)$ via a transposition. Consequently, if $n\ge 51$ then $\cst(\ga) = \cst_0(\ga) = \cst_{\cong}(\ga) < \cst_{\not\cong}(\ga)$.
\end{theorem}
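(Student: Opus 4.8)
The plan is to build everything on one structural result about the set $\diff(\ga,\gb)$, after which the four assertions follow by elementary numerics or a bounded case analysis. First I would record a cheap observation: for groups $G(\ga)\ne G(\gb)$ on $G$, writing $D=\diff(\ga,\gb)$, the pair $(\ga|_D,\gb|_D)$ is a Latin bitrade. Indeed, in each row $a$ the $\ga$-values and the $\gb$-values taken on the cells of $D$ are each the complement in $G$ of the common value set on the agreeing cells, so they coincide; the same holds for columns. Let $R$, $C$, $V$ be the rows, columns and symbols met by this bitrade, so that $D\subseteq R\times C$ and every row of $R$ and column of $C$ meets $D$ at least twice.

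The heart of the argument is to use associativity in \emph{both} $G(\ga)$ and $G(\gb)$ to constrain $R$, $C$, $V$. If $a\notin R$ then the left translations $L^{\ga}_a$ and $L^{\gb}_a$ coincide, and composing such agreeing translations while comparing $L^{\ga}_{a\ga c}$ with $L^{\gb}_{a\gb c}$ should force $R$, and symmetrically $C$ and $V$, to be unions of cosets of one common subgroup $H$, with $a\ga c$ and $a\gb c$ always in the same $H$-coset. A count confined to these cosets should then give $|R|=|C|=|V|$ and show that either $R=C=V=G$, in which case a global estimate yields $|D|\ge n^2/9$, or else $R$, $C$, $V$ lie in a proper union of $H$-cosets, in which case the bitrade is localised enough that the permutation equal to the identity off $V$ and a suitable bijection on $V$ is visibly an isomorphism $G(\ga)\to G(\gb)$. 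Either way, $\dist(\ga,\gb)<n^2/9$ forces $G(\ga)\cong G(\gb)$, which is the first assertion; the localised picture also carries the quantitative information needed later.

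For the middle two assertions I would specialise to a transposition $f=(a,b)$, so that $G(\gb)$ is the transported copy $x\gb y=f\bigl(f(x)\ga f(y)\bigr)$. Here $(x,y)\in D$ precisely when $f$ moves one of $x$, $y$ or $f(x)\ga f(y)$, so $D$ is the union of rows and columns $a,b$ with the diagonal set $\{(x,y):x\ga y\in\{a,b\}\}$, minus the cells on which these coincidental changes cancel. Counting the cancellations is a finite computation controlled by whether $a$, $b$ are involutions, whether they commute, and the structure of $\langle a,b\rangle$: the number of genuinely altered cells comes out as $6n-24$ plus a nonnegative correction that vanishes for a generic $a,b$ in an even-order group, is at least $4$ when $G(\ga)\cong D(O)$, and is at least $6$ when $n$ is odd (where no involution is available), and in each case a specific transposition attains the value. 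This yields $\dist(\ga,\gb)\ge\cst_0(\ga)$ for every transposition-isomorphic $G(\gb)$, with equality for some such $G(\gb)$.

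Finally, the corollary: for $n\ge 51$ the quadratic $n^2-54n+162$ is positive, hence $\cst_0(\ga)\le 6n-18<n^2/9$. The first assertion then gives $\cst_{\not\cong}(\ga)\ge n^2/9>\cst_0(\ga)\ge\cst_{\cong}(\ga)$, and the localised structure of the second step — together with the fact that the lower bound above is monotone in the support of the isomorphism — forces the closest isomorphic copy to be realised by a transposition, so $\cst_{\cong}(\ga)=\cst_0(\ga)$ and $\cst(\ga)=\cst_{\cong}(\ga)<\cst_{\not\cong}(\ga)$. I expect the structural step to be the real obstacle: the bitrade observation costs nothing, but squeezing out the subgroup $H$ and, above all, the sharp constant $1/9$ (rather than some weaker $cn^2$) demands a delicate simultaneous use of associativity on both sides and a tight coset-by-coset count; every step after that is comparison and bookkeeping.
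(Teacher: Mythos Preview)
The paper does not prove this theorem; it is quoted from Dr\'apal's paper \cite{DrEJC}, and only the structural lemmas that feed into it are reproduced in Section~\ref{Sc:Tools}. So there is no in-paper proof to compare your proposal against, but those lemmas do indicate where your sketch goes wrong.

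Your central dichotomy --- ``either $R=C=V=G$, giving $|D|\ge n^2/9$, or $R,C,V$ are proper unions of $H$-cosets, in which case one reads off an isomorphism'' --- is false on both branches. For the second branch, the paper's own example after \pref{Pr:3n4} exhibits $G(\ga)\cong C_4\times C_2$ and $G(\gb)\cong C_8$ with $H(\ga,\gb)=\{1,2\}$ nontrivial (so $R\ne G$) and yet $G(\ga)\not\cong G(\gb)$; the example after \lref{Lm:rs0} does the same with $C_9$ versus $(C_3)^2$. So ``$R$ proper'' does not localise the bitrade enough to force an isomorphism. For the first branch, knowing only that every row meets $D$ gives $|D|\ge 2n$ (or $3n$ if $n$ is odd), which is far short of $n^2/9$ for large $n$; you need a mechanism that forces \emph{many} differences per row, not just two.

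That mechanism, which your outline never invokes, is the triple inequality of \lref{Lm:Triple}: if $a\ga b\ne a\gb b$ then $\dist_a+\dist_b+\dist_{a\ga b}\ge n$. This is what Dr\'apal uses to push the density of $D$ up to the $n^2/9$ threshold and to show that below that threshold the set $K=\{a:\dist_a<n/3\}$ is large enough (indeed $|K|>3n/4$) for \pref{Pr:3n4} to produce an isomorphism fixing $K$ pointwise. The same chain --- large $K$, then \pref{Pr:3n4}, then \pref{Pr:ManyFixedPoints} --- is what yields $\cst_{\cong}(\ga)\ge\cst_0(\ga)$ for $n\ge 51$; your appeal to ``monotonicity in the support of the isomorphism'' does not supply this, since a priori the closest isomorphic copy need not come from any permutation with small support. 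Your transposition count and the numerics for $n\ge 51$ are fine, but without the triple inequality and the $K$-analysis the structural step cannot be completed.
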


Moreover, \cite[Proposition 5.8]{DrEJC} describes in detail the transpositions that achieve the distance $\cst_0(\ga)$. Hence our problem has already been solved in all but finitely many cases. Here is an overview of other known results concerning distances of groups:

To determine $\cst_{\not\cong}(\ga)$ appears to be a very difficult problem. We already know from \tref{Th:DrEJC} that $\cst_{\not\cong}(\ga)\ge n^2/9$ whenever $n\ge 5$. When $G(\ga)$ is a $2$-group then $\cst_{\not\cong}(\ga)\ge n^2/4$ by \cite{Dr2}. Examples of non-isomorphic $2$-groups at \emph{quarter distance}, that is, with $\dist(\ga,\gb) = n^2/4$, can be found in \cite{DrConstr1} and \cite{DrConstr2}. In \cite{DrProc}, Dr\'apal constructed a family of $p$-groups for every prime $p>2$ with the property $\cst_{\not\cong}(\ga) = (n^2/4)(1-1/p^2)$. In particular, there is a $3$-group satisfying $\cst_{\not\cong}(\ga) = 2n^2/9$ (see also Construction 2 in Subsection \ref{Ss:Other}). Ivanyos {\em et al.} \cite{ILY} showed, after this paper had been submitted, that $\cst_{\not\cong}(\ga)\ge 2n^2/9$ always holds.

%No group with $\cst_{\not\cong}(\ga)<2n^2/9$ is known, and it is believed that $\cst_{\not\cong}(\ga)\ge 2n^2/9$ always holds.

Let $\mathcal G(n)$ be a graph whose vertices are the isomorphism classes of groups of order $n$, and in which two vertices, possibly the same, form an edge if and only if they contain representatives at distance $\cst(n)$.

When $n$ is a power of two, let $\mathcal G'(n)$ be a graph on the same vertices as $\mathcal G(n)$ in which two vertices, possibly the same, form an edge if an only if they contain representatives at distance $n^2/4$ obtained by one of the two constructions of Dr\'apal \cite{DrConstr1} that we recall in Subsection \ref{Ss:Quarter}. When $n\in\{8,16\}$, it turns out that $\cst(n)=n^2/4$, so $\mathcal G'(n)$ is a subgraph of $\mathcal G(n)$.

By \cite{DrDiscr}, $\cst(\ga)\ge n^2/4$ for any $2$-group $G(\ga)$ of order $n\le 16$. In \cite{VoMS, VoProc}, the first author determined the connected graph $\mathcal G(8)$ with $\cst(n)=8^2/4=16$ (we checked that $\mathcal G'(8) = \mathcal G(8)$), calculated $\cst(\ga)$ for cyclic groups $G(\ga)$ of order less than $13$, proved that $\cst(\ga)=6n-18$ whenever $G(\ga)$ is a group of prime order $n>7$, and constructed a class of groups with $\cst(\ga)<\cst_0(\ga)$, of which the largest member has order $21$. (As we are going to show, $n=21$ happens to be the largest order for which $\cst(\ga)<\cst_0(\ga)$ can occur.)

B\'alek \cite{Ba} computed the subgraph $\mathcal G'(16)$ (excluding the diagonal entries) of $\mathcal G(16)$. Since $\mathcal G'(16)$ turns out to be connected, it follows that $\cst(\ga) = n^2/4$ for every group $G(\ga)$ of order $n=16$. A more direct argument establishing the connectedness of $\mathcal G(16)$ can be found in \cite{DrZh}. Our computational results show that $\mathcal G'(16) = \mathcal G(16)$. The two constructions of Subsection \ref{Ss:Quarter} can therefore be seen as canonical for $n\in\{8,16\}$.

Groups at quarter distance received attention even for orders $n=2^k>16$, although then $\cst(n)<n^2/4$ so $\mathcal G'(n)$ is no longer a subgraph of $\mathcal G(n)$. In \cite{Zh}, Zhukavets calculated $\mathcal G'(32)$ and $\mathcal G'(64)$; the first graph is connected while the second one has two connected components.

The quarter distance is of interest outside the variety of groups, too. In \cite{DrVo}, Dr\'apal and the first author generalized the constructions of \cite{DrConstr1} for \emph{Moufang loops}, that is, loops satisfying the identity $x(y(xz)) = ((xy)x)z$. The first author went on to construct a large family of Moufang loops of order $64$ \cite{VoEJC}, starting with the well-known Moufang loops $M_{2n}(G,2)$ of Chein \cite[pp. 35--38]{Chein} and using the constructions of \cite{DrVo}. Nagy and the first author eventually proved in \cite{NaVo} that the family of \cite{VoEJC} actually contains all Moufang loops of order $64$ up to isomorphism.

Distances of infinite groups are somewhat trivial, as it was shown in \cite{DrEJC} that if $G(\ga)$ is a group of infinite cardinality $\kappa$ then $\cst_{\cong}(\ga) = \cst_{\not\cong}(\ga) = \kappa$.

\subsection{The content}

For the convenience of the reader, the main result is stated at the outset in Section \ref{Sc:Main}.

For two subsets $\mathcal A$, $\mathcal B$ of groups defined on $G$, let
\begin{displaymath}
    \dist(\mathcal A,\mathcal B) = \min\{\dist(\ga,\gb);\;G(\ga)\in \mathcal A,\,G(\gb)\in\mathcal B,\,G(\ga)\ne G(\gb)\}.
\end{displaymath}
Denote by $[\ga]$ the class of all groups defined on $G$ and isomorphic to $G(\ga)$. In Section \ref{Sc:Aut}, we recall that $\dist([\ga],[\gb])= \dist([\ga],\gb)$. Consequently, the values of $\cst(\ga)$, $\cst_{\cong}(\ga)$ and $\cst_{\not\cong}(\ga)$ depend only on the isomorphism type of $G(\ga)$. If $n\ge 5$, \lref{Lm:Neutral} allows us to assume that closest groups have the same neutral element. \lref{Lm:Aut} shows how automorphism groups of $G(\ga)$, $G(\gb)$ come into play to speed up the calculation of $\dist([\ga],[\gb])$.

In Section \ref{Sc:Tools} we introduce, following Dr\'apal, these concepts and parameters:
\begin{align} \label{Eq:Tools}
\begin{split}
\textstyle \diff_a(\ga,\gb) &=
\{(a,b);\;b\in G,\,a\ga b\ne a\gb b\}|,\quad
\textstyle \dist_a(\ga,\gb) = |\diff_a(\ga,\gb)|,\\
    m(\ga,\gb) &=
\textstyle
\min\{\dist_a(\ga,\gb);\;a\in G,\,\dist_a(\ga,\gb)>0\},\\
    H(\ga,\gb) &=
\textstyle
\{a\in G;\;\dist_a(\ga,\gb)=0\},\quad h(\ga,\gb)=|H(\ga,\gb)|,\\
    K(\ga,\gb) &=
\textstyle
\{a\in G;\;\dist_a(\ga,\gb)<n/3\},\quad k(\ga,\gb)=|K(\ga,\gb)|.
\end{split}
\end{align}
When $\ga$, $\gb$ are fixed, we drop the operations from the names of the parameters and write $\dist_a$, $m$, $H$, and so on.

Among other results, we recall in Section \ref{Sc:Tools} that $a\ga b\ne a\gb b$ implies $\dist_a+\dist_b + \dist_{a\ga b}\ge n$; the set $H$ is either empty or it is a subgroup of both $G(\ga)$ and $G(\gb)$; if $|k|>3n/4$ then $\dist(\ga,\gb)>\cst_0(\ga)$; $m\ge 2$ if $n$ is even and $m\ge 3$ if $n$ is odd. We also study $\dist_a$ when the orders of $a$ in $G(\ga)$ and $G(\gb)$ disagree.

Building on these results, in Section \ref{Sc:Inequalities} we develop a series of inequalities relating $n$, $h$, $k$, $m$ and, consequently, we find only a few (less than hundred) quadruples $(n,h,k,m)$ in the range $22<n<51$ that can possibly yield $\dist(\ga,\gb)\le \cst_0(\ga)$. This will already imply that $\dist(\ga,\gb)<\cst_0(\ga)$ cannot hold for $n\ge 43$, improving upon the bound $n\ge 51$ of \tref{Th:DrEJC}.

In Section \ref{Sc:m2}, we first show that the case $m=2$ can be reduced to the study of distances of the cyclic group $C_n$ from a group possessing an element of order $n/2$, a case that is not difficult to handle computationally. We can proceed similarly when $n$ is a prime, independently verifying the results of \cite{VoMS, VoProc}.

The general algorithm for finding $\dist([\ga],[\gb])$ is given in Section \ref{Sc:Algorithm}. The algorithm is sufficiently fast to deal with all orders $n\le 22$ and also all cases when $h>1$, leaving us with only $20$ quadruples $(n,h,k,m)$, which require a very delicate analysis.

In Section \ref{Sc:Graph} we study the question: \emph{Given an edge-colored graph on $v$ vertices such that no color is used more than $m$ times and no vertex is adjacent to more than two edges of the same color, how many edges must the graph have to guarantee a rainbow $i$-matching?} A partial answer can be found in \pref{Pr:Graph}.

Returning to the problem of group distances, in Section \ref{Sc:ApplyGraph} we study the set $\{(a,b)\in\diff(\ga,\gb);\;a\in K,b\not\in K,a\ga b\not\in K\}$ and similar sets which give rise to edge-colored graphs. The main idea of Section \ref{Sc:ApplyGraph} is to exhibit a large enough rainbow matching in a certain graph to push the distance over the threshold $\cst_0(\ga)$.

Only $7$ quadruples $(n,h,k,m)$ remain after this analysis, all with $n\le 28$. These are disposed of in Section \ref{Sc:Stubborn}, using a series of increasingly more specialized lemmas.

Finally, in Section \ref{Sc:Constructions} we present several constructions that produce all pairs $G(\ga)$, $G(\gb)$ with $\dist(\ga,\gb) = \cst(\ga)<\cst_0(\ga)$. These are the constructions alluded to in \tref{Th:Main}, the main result.

\section{Main result}\label{Sc:Main}

\begin{theorem}\label{Th:Main}
Let $G$ be a set of size $n\ge 4$. Let $G(\ga)$ be a group defined on $G$, $\cst(\ga) = \min\{\dist(\ga,\gb);\;G(\gb)\text{ is a group different from }G(\ga)\}$, $\cnb(\ga) = \{G(\gb);\;\dist(\ga,\gb)=\cst(\ga)\}$, and let $\cst_0(\ga)$ be defined as in \eqref{Eq:Delta0}.

Then the value of $\cst(\ga)$ and one representative from $\cnb(\ga)$
for every isomorphism type of groups present in $\cnb(\ga)$ can be
found as follows:
\begin{enumerate}
\item[$\bullet$] If $n\not\in\{4,6,7,8,9,10,12,14,15,16,18,21\}$ then
  $\cst(\ga)=\cst_0(\ga)$, all groups in $\cnb(\ga)$ are isomorphic to
  $G(\ga)$, and there is a transposition $f$ of $G$ such that
  $f:G(\ga)\to G(\gb)$ is an isomorphism and $G(\gb)\in \cnb(\ga)$.
\item[$\bullet$] Otherwise the value of $\cst(\ga)$ and the
  isomorphism types of groups in $\cnb(\ga)$ can be found in Table
  \ref{Tb:Main}. When $n$ is a power of two and also in the case $\dist(C_3\times S_3,C_3\times S_3)$, the representatives of
  $\cnb(\ga)$ can be obtained by the constructions
  of Subsection $\ref{Ss:Quarter}$. When $n$ is not a power of two, the
  representatives of $\cnb(\ga)$ can be obtained by one of the three
  types of constructions of Subsection $\ref{Ss:Other}$, as indicated by
  the superscript in the table.
\end{enumerate}
In particular,
\begin{enumerate}
\item[$\bullet$] $\cst(\ga)<\cst_0(\ga)$ if and only if $G(\ga)$ is one of the following groups: $C_6$, $C_{10}$, $C_{14}$, $C_{21}$, a group of order $12$ except for $A_4$, a group of order $7$, $8$, $9$, $15$, $16$ or $18$.

\item[$\bullet$] $\cnb(\ga)$ contains groups of more than one isomorphism type if and only if $G(\ga)$ is one of the following groups: $C_9$, $D_{10}$, a group of order $8$, a group of order $16$, $D_{18}$, $C_{18}$, $C_6\times C_3$.

\item[$\bullet$] $\cnb(\ga)$ contains no groups isomorphic to $G(\ga)$ if and only if $G(\ga)$ is one of the following groups: $C_4$, $(C_2)^2$, $S_3$, $Q_8$, $(C_2)^3$, $(C_3)^2$, $(C_2)^4$, $(C_3)^2\sdp C_2$.

\end{enumerate}
\end{theorem}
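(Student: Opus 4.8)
The plan is to establish \tref{Th:Main} by a three-tiered case analysis on the order $n$, combining the asymptotic result of Dr\'apal (\tref{Th:DrEJC}) with a mixture of refined combinatorial estimates and explicit computation for the small orders. First I would dispose of all sufficiently large $n$: by \tref{Th:DrEJC} we already know that $\cst(\ga)=\cst_0(\ga)=\cst_\cong(\ga)<\cst_{\not\cong}(\ga)$ whenever $n\ge 51$, and \cite[Proposition 5.8]{DrEJC} gives the transpositions achieving $\cst_0(\ga)$, so the first bullet of the theorem holds there. The work of Sections \ref{Sc:Tools}--\ref{Sc:Stubborn} then pushes this threshold down: the inequalities among $n,h,k,m$ derived in Section \ref{Sc:Inequalities} rule out $\dist(\ga,\gb)<\cst_0(\ga)$ for $43\le n\le 50$, and successive refinements (the $m=2$ reduction of Section \ref{Sc:m2}, the algorithm of Section \ref{Sc:Algorithm}, the rainbow-matching argument of Sections \ref{Sc:Graph}--\ref{Sc:ApplyGraph}, and the ad hoc lemmas of Section \ref{Sc:Stubborn}) eliminate all remaining quadruples with $n\notin\{4,6,7,8,9,10,12,14,15,16,18,21\}$. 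This shows that for $n$ outside that exceptional list, no group $G(\gb)$ beats the transposition bound, so $\cst(\ga)=\cst_0(\ga)$; combined with Dr\'apal's description of the extremal transpositions, the first bullet of \tref{Th:Main} follows, and in particular every group in $\cnb(\ga)$ is isomorphic to $G(\ga)$, settling the third displayed "in particular" claim in one direction for all such $n$.

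Next I would treat the finitely many exceptional orders $n\in\{4,6,7,8,9,10,12,14,15,16,18,21\}$ individually. For each such $n$ and each isomorphism type $[\ga]$ of group of that order, the quantity $\dist([\ga],[\gb])$ must be computed for every isomorphism type $[\gb]$ of order $n$; by the results recalled in Section \ref{Sc:Aut} (namely $\dist([\ga],[\gb])=\dist([\ga],\gb)$ and the automorphism-group speedup of \lref{Lm:Aut}), together with the normalization of the neutral element from \lref{Lm:Neutral} valid for $n\ge 5$, this is a finite computation feasible by the algorithm of Section \ref{Sc:Algorithm} — and for the powers of two $n\in\{8,16\}$ it matches the quarter-distance results of \cite{VoMS,Ba,DrZh} with $\cst(n)=n^2/4$. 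Carrying this out produces, for each exceptional $n$, the value $\cst(\ga)$ and the list of isomorphism types appearing in $\cnb(\ga)$; these are exactly the entries tabulated in Table \ref{Tb:Main}. Cross-referencing the table against $\cst_0(\ga)$ as defined in \eqref{Eq:Delta0} then yields the three "in particular" bullets: $\cst(\ga)<\cst_0(\ga)$ precisely for the listed groups (note $n=4$ has $\cst(\ga)=\cst_0(\ga)$ so $C_4$ and $(C_2)^2$ are correctly absent from the first "in particular" list but present in the third); $\cnb(\ga)$ is multi-typed exactly for the seven listed groups; and $\cnb(\ga)$ misses $[\ga]$ exactly for the eight listed groups. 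These three lists are simply read off once the table is complete.

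Finally, the constructions of Section \ref{Sc:Constructions} must be verified to realize every pair claimed: for each exceptional $(G(\ga),G(\gb))$ with $\dist(\ga,\gb)=\cst(\ga)$, one exhibits $G(\gb)$ by the appropriate construction (the two Dr\'apal constructions of Subsection \ref{Ss:Quarter} for the powers of two and for $C_3\times S_3$, and one of the three constructions of Subsection \ref{Ss:Other} — as flagged by the superscript in the table — otherwise) and checks directly that the resulting table is at the stated distance from $G(\ga)$ and has the asserted isomorphism type. Together with the computation that no smaller distance occurs (from the algorithm, which is exhaustive for these $n$), this establishes the second clause of the theorem. The main obstacle is the middle regime $22<n<43$ (and more generally the passage from $n\ge 51$ down to the true exceptional set): \tref{Th:DrEJC} gives nothing there, pure counting from Section \ref{Sc:Tools} leaves on the order of a hundred candidate quadruples $(n,h,k,m)$, and the algorithm alone is too slow for the largest of these, so the delicate rainbow-matching analysis of Sections \ref{Sc:Graph}--\ref{Sc:ApplyGraph} together with the specialized lemmas of Section \ref{Sc:Stubborn} is precisely what is needed to close the gap and confirm that $\{4,6,7,8,9,10,12,14,15,16,18,21\}$ is exactly the exceptional set. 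Everything else — the large-$n$ case, the finite computations, and reading off the three "in particular" lists — is comparatively routine once that analysis is in hand.
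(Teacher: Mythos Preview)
Your outline is essentially the paper's own strategy, but there is one structural slip and one factual error worth flagging.

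The structural slip: the quadruple analysis of Sections \ref{Sc:Inequalities}--\ref{Sc:Stubborn} is set up only for $23\le n\le 50$ (see the standing assumptions at the start of Section \ref{Sc:Inequalities}), so it says nothing about $n\in\{5,11,13,17,19,20,22\}$. In the paper these orders are handled by running the general algorithm of Section \ref{Sc:Algorithm} for \emph{all} $n\le 22$, and the exceptional set $\{4,6,7,8,9,10,12,14,15,16,18,21\}$ emerges as the \emph{output} of that computation, not as its input. Your second paragraph runs the algorithm only on the twelve exceptional orders, leaving the seven orders just listed unaccounted for; as written, nothing in your outline proves $\cst(\ga)=\cst_0(\ga)$ for, say, $n=20$.

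The factual error is your parenthetical about $n=4$. Since $4\equiv 0\pmod 4$, formula \eqref{Eq:Delta0} gives $\cst_0(\ga)=6\cdot 4-24=0$, whereas Table \ref{Tb:Main} shows $\cst(\ga)=4$ for both groups of order $4$; thus $\cst(\ga)>\cst_0(\ga)$, not $\cst(\ga)=\cst_0(\ga)$. (This is consistent with \tref{Th:DrEJC}, which asserts the transposition bound only for $n\ge 5$.) The reason $n=4$ lands in the exceptional list is that the closest group is non-isomorphic, which is exactly why $C_4$ and $(C_2)^2$ appear in the third ``in particular'' bullet.
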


% TABLES
\begin{scriptsize}
\begin{table}
\caption{Distances of isomorphism classes of groups for all orders $n$ where at least one group $G(\ga)$ satisfies $\cst(\ga)<\cst_0(\ga)$. A group of order $n$ labeled by $i$ is the $i$th group of order $n$ as listed in \texttt{GAP}. The row labels are structural descriptions of the groups with the usual conventions. The distance $\dist([\ga],[\gb])$ between the $i$th group $G(\ga)$ and the $j$th group $G(\gb)$ of order $n$ can be found in row $i$ and column $j$ of the table for $n$. This value is underlined if it is less than $\cst_0(\ga)$ (this has the potential to break the diagonal symmetry of the tables but actually never does), it is in bold face if it equals $\cst(n)$, and it is replaced with ``?'' if it was not calculated exactly but exceeds $\cst_0(\ga)$. The superscript points to a construction in Subsection \ref{Ss:Other} that achieves the distance.}\label{Tb:Main}

\begin{displaymath}
\narrowcols
\begin{array}{r|c|c}
n=4
&1&2\\
\hline\spacer
C_4=1&7&\bf{4}\\
(C_2)^2=2&\bf{4}&16\\
\end{array}
\normalcols
\quad
\narrowcols
\begin{array}{r|c|c}
n=6
&1&2\\
\hline\spacer
S_3=1&16&12^{1}\\
C_6=2&12&\best{8}^2\\
\end{array}
\normalcols
\quad
\narrowcols
\begin{array}{r|c|c}
n=9
&1&2\\
\hline\spacer
C_9=1&\best{18}^3&\best{18}^2\\
(C_3)^2=2&\best{18}^2&36\\
\end{array}
\normalcols
\end{displaymath}

\begin{displaymath}
\narrowcols
\begin{array}{r|c}
n=7
&1\\
\hline\spacer
C_7=1&\best{18}^3\\
\end{array}
\normalcols
\quad\quad
\narrowcols
\begin{array}{r|c}
n=15
&1\\
\hline\spacer
C_{15}=1&\best{50}^2\\
\end{array}
\end{displaymath}

\begin{displaymath}
\narrowcols
\begin{array}{r|c|c}
n=10
&1&2\\
\hline\spacer
D_{10}=1&40&40^1\\
C_{10}=2&40&\best{24}^2\\
\end{array}
\normalcols
\quad
\narrowcols
\begin{array}{r|c|c}
n=14
&1&2\\
\hline\spacer
D_{14}=1&64&84\\
C_{14}=2&84&\best{48}^2\\
\end{array}
\normalcols
\quad
\narrowcols
\begin{array}{r|c|c}
n=21
&1&2\\
\hline\spacer
C_7\sdp C_3=1&108&?\\
C_{21}=2&?&\best{98}^2\\
\end{array}
\normalcols
\end{displaymath}

\begin{displaymath}
\narrowcols
\begin{array}{r|c|c|c|c|c}
n=8
&1&2&3&4&5\\
\hline\spacer
C_8=1           &\best{16}&\best{16}&24&24&28\\
C_4\times C_2=2 &\best{16}&\best{16}&\best{16}&\best{16}&\best{16}\\
D_8=3           &24&\best{16}&\best{16}&\best{16}&\best{16}\\
Q_8=4           &24&\best{16}&\best{16}&24&24\\
(C_2)^3=5       &28&\best{16}&\best{16}&24&24\\
\end{array}
\normalcols
\end{displaymath}

\begin{displaymath}
\narrowcols
\begin{array}{r|c|c|c|c|c}
n=12
&1&2&3&4&5\\
\hline\spacer
\Dic_3=1         &\best{32}^2&48&82&\good{36}&60\\
C_{12}=2        &48&\best{32}^2&70&60&\good{36}\\
A_4=3           &82&70&48&72&60\\
D_{12}=4        &\good{36}&60&72&\best{32}^2&48\\
C_6\times C_2=5 &60&\good{36}&60&48&\best{32}^2\\
\end{array}
\normalcols
\quad\quad
\narrowcols
\begin{array}{r|c|c|c|c|c}
n=18
&1&2&3&4&5\\
\hline\spacer
D_{18}=1                & \best{72}^3&144&144&\best{72}^2&180\\
C_{18}=2                &144& \best{72}^3&138&180&\best{72}^2\\
C_3\times S_3=3         &144&138& \good{81}&108&108\\
(C_3)^2\sdp C_2=4           & \best{72}^2&180&108& 88&144\\
C_6\times C_3=5         &180& \best{72}^2&108&144& \best{72}^2\\
\end{array}
\normalcols
\end{displaymath}

\begin{displaymath}
\narrowcols
\begin{array}{r|c|c|c|c|c|c|c|c|c|c|c|c|c|c}
n=16
&1&2&3&4&5&6&7&8&9&10&11&12&13&14\\
\hline\spacer
C_{16}=1 & \best{64}& \best{64}&112&112& \best{64}& 96&112&112&112&112&136&136&128&148\\
(C_4)^2=2 & \best{64}& \best{64}& \best{64}& \best{64}& \best{64}& 88&128&112&112& \best{64}& 96& 96& 96&112\\
\text{rank\ 2\ }(C_4\times C_2)\sdp C_2=3 &112& \best{64}& \best{64}& \best{64}& 88& \best{64}& 96& \best{64}& 96& \best{64}& \best{64}& 96& 96& 96\\
C_4\sdp C_4=4&112& \best{64}& \best{64}& \best{64}& 88& \best{64}& 96& 96& \best{64}& \best{64}& \best{64}& \best{64}& 96&112\\
C_8\times C_2=5 & \best{64}& \best{64}& 88& 88& \best{64}& \best{64}& 96& 96& 96& \best{64}& 96& 96& 96&112\\
C_8\sdp C_2=6 & 96& 88& \best{64}& \best{64}& \best{64}& \best{64}& 96& 96& 96& 88& 96& 96& \best{64}&128\\
D_{16}=7 &112&128& 96& 96& 96& 96& \best{64}& \best{64}& \best{64}&112& \best{64}&112& 96&112\\
QD_{16}=8 &112&112& \best{64}& 96& 96& 96& \best{64}& \best{64}& \best{64}&112& 96& 96& \best{64}&128\\
Q_{16}=9 &112&112& 96& \best{64}& 96& 96& \best{64}& \best{64}& \best{64}&112& 96& \best{64}& 96&136\\
C_4\times (C_2)^2=10&112& \best{64}& \best{64}& \best{64}& \best{64}& 88&112&112&112& \best{64}& \best{64}& \best{64}& \best{64}& \best{64}\\
C_2\times D_8=11&136& 96& \best{64}& \best{64}& 96& 96& \best{64}& 96& 96& \best{64}& \best{64}& \best{64}& \best{64}& \best{64}\\
C_2\times Q_8=12&136& 96& 96& \best{64}& 96& 96&112& 96& \best{64}& \best{64}& \best{64}& \best{64}& \best{64}& 96\\
\text{rank\ 3\ }(C_4\times C_2)\sdp C_2 = 13&128& 96& 96& 96& 96& \best{64}& 96& \best{64}& 96& \best{64}& \best{64}& \best{64}& \best{64}& 88\\
(C_2)^4=14&148&112& 96&112&112&128&112&128&136& \best{64}& \best{64}& 96& 88& 72
\end{array}
\normalcols
\end{displaymath}
\end{table}
\end{scriptsize}

\subsection{Additional results}

The values $\cst_{\cong}(C_n)$ for $4\le n\le 22$ are as follows:
\begin{displaymath}
\begin{array}{|c|ccccccccccccccccccc|}
\hline
n&4&5&6&7&8&9&10&11&12&13&14&15&16&17&18&19&20&21&22\\
\cst_{\cong}(C_n)&7&12&8&18&16&18&24&48&32&60&48&50&64&84&72&96&96&98&108\\
\hline
\end{array}
\end{displaymath}

The distances for $n\in\{20,22\}$ are as follows, with the same notational conventions as in Table \ref{Tb:Main}:
\begin{displaymath}
\begin{array}{r|c|c|c|c|c}
n=20
&1&2&3&4&5\\
\hline\spacer
\Dic_5=1                 & \mathbf{96}&  ?& ?&100&?\\
C_{20}=2                &  ?& \mathbf{96}& ?&  ?&100\\
C_5\sdp C_4=3               &  ?&  ?&\mathbf{96}&  ?&?\\
D_{20}=4                &100&  ?& ?&\mathbf{96} &160\\
C_{10}\times C_2=5      &  ?&100& ?&  160&\mathbf{96}\\
\end{array}
\normalcols
\quad\quad
\narrowcols
\begin{array}{r|c|c}
n=22
&1&2\\
\hline\spacer
D_{22}&112&?\\
C_{22}&?&\mathbf{108}\\  % carefully, special case when $\delta$ does not beat $\delta_0
\end{array}
\quad\quad
\end{displaymath}

\section{Distances of isomorphism classes}\label{Sc:Aut}

For a group $G(\ga)$ and a bijection $f:G\to G$ there is a unique group $G(\gb)$ such that $f:G(\ga)\to G(\gb)$ is an isomorphism, namely $a\gb b = f(f^{-1}(a)\ga f^{-1}(b))$. We denote this operation $\gb$ by $\ga_f$.

\begin{lemma}\label{Lm:IsoMove}
Let $G(\ga)$, $G(\gb)$ be groups and $f:G\to G$ a bijection. Then $\dist_a(\ga,\gb) = \dist_{f(a)}(\ga_f,\gb_f)$ for every $a\in G$. In particular, $\dist(\ga,\gb) = \dist(\ga_f,\gb_f)$.
\end{lemma}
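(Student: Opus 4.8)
The plan is to unwind the definitions of $\ga_f$ and $\gb_f$ and observe that acting by $f^{-1}$ on the arguments and by $f$ on the output sets up a coordinate-wise bijection between the relevant ``difference sets''. Concretely, for any $a,b\in G$ put $x=f^{-1}(a)$ and $y=f^{-1}(b)$; then by definition $a\ga_f b=f(x\ga y)$ and $a\gb_f b=f(x\gb y)$, so, since $f$ is injective, $a\ga_f b\ne a\gb_f b$ holds if and only if $x\ga y\ne x\gb y$. Equivalently, $(a,b)\in\diff(\ga_f,\gb_f)$ precisely when $(f^{-1}(a),f^{-1}(b))\in\diff(\ga,\gb)$, which is the whole content of the lemma in germ form.

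First I would make this precise at the level of a single row. Fix $a\in G$ and consider the substitution $b\mapsto f^{-1}(b)$ applied to the second coordinate. From $f(a)\ga_f b=f(a\ga f^{-1}(b))$ and $f(a)\gb_f b=f(a\gb f^{-1}(b))$, together with injectivity of $f$, one obtains that $(f(a),b)\in\diff_{f(a)}(\ga_f,\gb_f)$ if and only if $(a,f^{-1}(b))\in\diff_a(\ga,\gb)$. Since $b\mapsto f^{-1}(b)$ is a bijection of $G$, this exhibits an explicit bijection between $\diff_a(\ga,\gb)$ and $\diff_{f(a)}(\ga_f,\gb_f)$, whence $\dist_a(\ga,\gb)=\dist_{f(a)}(\ga_f,\gb_f)$ as claimed.

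For the ``in particular'' clause I would simply sum over the rows: $\dist(\ga,\gb)=\sum_{a\in G}\dist_a(\ga,\gb)=\sum_{a\in G}\dist_{f(a)}(\ga_f,\gb_f)$, and because $f$ permutes $G$ the last sum re-indexes to $\sum_{c\in G}\dist_c(\ga_f,\gb_f)=\dist(\ga_f,\gb_f)$. Alternatively, one can argue in one stroke that $(x,y)\mapsto(f(x),f(y))$ is a bijection of $G\times G$ that restricts to a bijection $\diff(\ga,\gb)\to\diff(\ga_f,\gb_f)$. There is no genuine obstacle here: the only points requiring care are keeping track of whether $f$ or $f^{-1}$ acts on a given coordinate, and the fact that $\ga_f$ and $\gb_f$ really are group operations — which is exactly the content of the paragraph preceding the lemma, so it may be invoked without further comment.
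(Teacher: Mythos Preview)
Your argument is correct and essentially the same as the paper's: both fix $a$ and use the substitution $b\mapsto f^{-1}(b)$ together with injectivity of $f$ to biject the row-difference sets, the paper presenting this as a chain of conditions with equal solution sets while you phrase it as an explicit bijection $\diff_a(\ga,\gb)\to\diff_{f(a)}(\ga_f,\gb_f)$. The only superfluous remark is that $\ga_f$, $\gb_f$ are group operations---the computation works for arbitrary binary operations---but this does no harm.
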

\begin{proof}
Fix $a\in G$. The cardinalities of the sets of elements $b\in G$ satisfying any of the following conditions are the same:
\begin{align*}
    a\ga b&\ne a\gb b,\\
    f^{-1}(f(a))\ga b&\ne f^{-1}(f(a))\gb b,\\
    f^{-1}(f(a))\ga f^{-1}(b)&\ne f^{-1}(f(a))\gb f^{-1}(b),\\
    f(f^{-1}(f(a))\ga f^{-1}(b))&\ne f(f^{-1}(f(a))\gb f^{-1}(b)),\\
    f(a)\ga_f b&\ne f(a)\gb_f b.
\end{align*}
\end{proof}

\begin{proposition}\label{Pr:IsoDist}
Let $G(\ga)$, $G(\gb)$ be groups. Then $\dist([\ga],[\gb]) = \dist([\ga],\gb)$. Moreover, if $G(\ga)\cong G(\gb)$ then $\cst(\ga) = \cst(\gb)$, $\cst_{\cong}(\ga) = \cst_{\cong}(\gb)$ and $\cst_{\not\cong}(\ga) = \cst_{\not\cong}(\gb)$.
\end{proposition}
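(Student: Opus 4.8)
The plan is to exploit \lref{Lm:IsoMove} directly. For the first claim, I want to show the two-sided inclusion of the sets over which the two minima are taken, after suitably translating by isomorphisms. Since $[\ga]$ consists of all groups defined on $G$ that are isomorphic to $G(\ga)$, any element of $[\ga]$ has the form $G(\ga_f)$ for some bijection $f:G\to G$, and conversely every such $G(\ga_f)$ lies in $[\ga]$; similarly for $[\gb]$. Given $G(\ga_f)\in[\ga]$ and $G(\gb_g)\in[\gb]$ with $G(\ga_f)\ne G(\gb_g)$, \lref{Lm:IsoMove} applied with the bijection $f^{-1}$ gives $\dist(\ga_f,\gb_g)=\dist(\ga, (\gb_g)_{f^{-1}})$, and $(\gb_g)_{f^{-1}}=\gb_{g\circ f^{-1}}$ is again a group in $[\gb]$ (here I should check that $(\cdot)_f$ composes as $(\gamma_g)_f=\gamma_{fg}$, which is immediate from the defining formula $a\gamma_f b=f(f^{-1}(a)\gamma f^{-1}(b))$). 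Hence every distance realized between $[\ga]$ and $[\gb]$ is already realized between $[\ga]$ and a single representative $\gb$ of $[\gb]$, and the reverse inequality $\dist([\ga],[\gb])\le\dist([\ga],\gb)$ is trivial because $\gb\in[\gb]$. This yields $\dist([\ga],[\gb])=\dist([\ga],\gb)$.

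For the second part, suppose $G(\ga)\cong G(\gb)$, so $\gb=\ga_f$ for some bijection $f$. I would show each of the three quantities is invariant under replacing $\ga$ by $\ga_f$. Take $\cst_{\cong}$ as the representative case: by definition $\cst_{\cong}(\ga)=\min\{\dist(\ga,\gamma):G(\gamma)\cong G(\ga),\ G(\gamma)\ne G(\ga)\}$. Every such $\gamma$ is of the form $\ga_g$ with $G(\ga_g)\ne G(\ga)$, and \lref{Lm:IsoMove} gives $\dist(\ga,\ga_g)=\dist(\ga_f,(\ga_g)_f)=\dist(\gb,\ga_{fg})$; moreover $G(\ga_{fg})\cong G(\gb)$ and $G(\ga_{fg})\ne G(\gb)$ exactly when $G(\ga_g)\ne G(\ga)$ (applying the bijection $f$ to an inequality of tables preserves it, again by the formula for $\ga_f$). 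As $g$ ranges over all bijections with $G(\ga_g)\ne G(\ga)$, the element $fg$ ranges over all bijections $h$ with $G(\ga_h)\ne G(\gb)$, so the two minima coincide and $\cst_{\cong}(\ga)=\cst_{\cong}(\gb)$. The arguments for $\cst_{\not\cong}$ (now $G(\gamma)\not\cong G(\ga)$ iff $G(\gamma)\not\cong G(\gb)$, since $G(\ga)\cong G(\gb)$) and for $\cst$ (take the min of both, or just note $\cst(\ga)=\min\{\dist(\ga,\gamma):G(\gamma)\ne G(\ga)\}$ and repeat the translation) are identical in structure.

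The only thing that needs care is the bookkeeping: that $(\gamma_g)_f=\gamma_{fg}$, that the map $g\mapsto fg$ is a bijection on the set of bijections of $G$, and that isomorphism classes and the relation ``$\ne$ as a table on $G$'' are both transported correctly under $(\cdot)_f$. None of this is deep, so I do not expect a genuine obstacle; the mild pitfall is getting the composition order right in $(\gamma_g)_f=\gamma_{fg}$ versus $\gamma_{gf}$ and making sure \lref{Lm:IsoMove} is applied with the correct bijection, but a direct substitution into the defining formula settles it.
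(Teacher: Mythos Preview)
Your approach is essentially the same as the paper's: both parts are immediate applications of \lref{Lm:IsoMove}, transporting distances by a bijection. There is one small slip in your first paragraph: applying $f^{-1}$ gives $\dist(\ga_f,\gb_g)=\dist(\ga,\gb_{f^{-1}g})$, which shows $\dist([\ga],[\gb])=\dist(\ga,[\gb])$, not $\dist([\ga],\gb)$ as stated; the paper instead applies $g^{-1}$ to obtain $\dist(\ga_f,\gb_g)=\dist(\ga_{g^{-1}f},\gb)\ge\dist([\ga],\gb)$. Either fix this by applying $g^{-1}$, or add a line invoking the obvious symmetry $\dist([\ga],[\gb])=\dist([\gb],[\ga])$. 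Apart from this, your argument is correct and matches the paper's.
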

\begin{proof}
Let $f$, $g:G\to G$ be bijections for which $\dist([\ga],[\gb]) = \dist(\ga_f,\gb_g)$. Then, by \lref{Lm:IsoMove}, $\dist([\ga],[\gb]) = \dist(\ga_f,\gb_g) = \dist((\ga_f)_{g^{-1}},\gb)\ge\dist([\ga],\gb)$. The other inequality is obvious.

Now assume that $\gb = \ga_f$ for some bijection $f:G\to G$, and let $G(\gc)$ be such that $\cst(\ga) = \dist(\ga,\gc)$. Then $\cst(\gb)\le \dist(\gb,\gc_f) = \dist(\ga_f,\gc_f) = \dist(\ga,\gc)=\cst(\ga)$, the other inequality follows by symmetry, so $\cst(\ga)=\cst(\gb)$. The equalities $\cst_{\cong}(\ga) = \cst_{\cong}(\gb)$ and $\cst_{\not\cong}(\ga) = \cst_{\not\cong}(\gb)$ are proved similarly.
\end{proof}

To determine $\dist([\ga],[\gb])$ it therefore suffices to find the minimal value of $\dist(\ga_f,\gb)$, where $f:G\to G$ is a bijection.

Let us denote the neutral element of $G(\ga)$ by $1(\ga)$, and the inverse of $a$ in $G(\ga)$ by $a^\ga$.

\begin{lemma}\label{Lm:Neutral}
Assume that $G(\ga)$, $G(\gb)$ have the same neutral element $1(\ga)=1(\gb)$, and let $f:G\to G$ be a bijection such that $\dist([\ga],[\gb]) = \dist(\ga_f,\gb)$. Then either $f(1(\ga)) = 1(\ga)$, or else $\ga_f =\gb_\ell$ for some transposition $\ell$ and $\dist([\ga],[\gb]) = \dist(\gb_\ell,\gb)$.
\end{lemma}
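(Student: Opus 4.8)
The plan is to analyze the bijection $f$ realizing $\dist([\ga],[\gb]) = \dist(\ga_f,\gb)$ by asking what happens to the neutral element. Suppose $f(1(\ga)) = c \ne 1(\ga)$. The key observation is that $1(\ga)$ is the neutral element of $G(\ga)$, so under $f$ it becomes the neutral element of $G(\ga_f)$; that is, $c$ is neutral in $G(\ga_f)$. On the other hand, $G(\gb)$ has neutral element $1(\gb) = 1(\ga)$. So $G(\ga_f)$ and $G(\gb)$ are groups on $G$ with \emph{different} neutral elements, $c$ and $1(\ga)$ respectively. First I would record the elementary fact that whenever two groups on $G$ have distinct neutral elements $c$ and $1$, then composing one of them with the transposition $\ell = (c, 1)$ produces a group with the \emph{same} neutral element as the other; more precisely, $1(\gb_\ell) = \ell(1(\gb)) = \ell(1(\ga)) = c = 1(\ga_f)$. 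This is the transposition $\ell$ promised in the statement.

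Next I would aim to show that, in fact, $\ga_f = \gb_\ell$ outright, not merely that the two share a neutral element. The idea is a minimality squeeze. On one hand, by \lref{Lm:IsoMove} we have $\dist(\gb_\ell, \gb) = \dist(\ell\text{-conjugate comparison})$, and since $\ell$ is a transposition, $\gb_\ell$ is isomorphic to $G(\gb)$ via a transposition, so $\dist(\gb_\ell,\gb) \le \cst_0(\gb)$-type bounds are not quite what we need; instead the relevant inequality comes from the other direction. The cleaner route: since $G(\ga_f)$ and $G(\gb_\ell)$ are both groups on $G$ with the same neutral element $c$, and since $G(\ga_f) \cong G(\ga)$ while $G(\gb_\ell)\cong G(\gb)$, we have $\dist(\ga_f, \gb_\ell) \ge \dist([\ga],[\gb])$ (using \pref{Pr:IsoDist}), \emph{unless} $\ga_f = \gb_\ell$. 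Meanwhile I want to bound $\dist(\ga_f, \gb_\ell)$ from above by something forcing it below $\dist([\ga],[\gb])$ when $f(1(\ga))\ne 1(\ga)$. The point is that $\diff(\ga_f, \gb_\ell)$ and $\diff(\ga_f,\gb)$ are closely related: $\gb_\ell$ and $\gb$ differ only in the rows, columns, and diagonal entries indexed by $c$ and $1(\ga)$ (the standard transposition-isomorphism support bound quoted in the introduction), so $\dist(\gb_\ell, \gb)$ is small, at most something like $6n$. More to the point, the positions where $\ga_f$ agrees with $\gb$ versus where it agrees with $\gb_\ell$ can be compared cell by cell using the neutral-element structure: in the row and column of $c = 1(\ga_f)$, the table of $G(\ga_f)$ is completely determined (it is the identity row/column), and likewise for $G(\gb_\ell)$ in the row and column of its neutral element $c$. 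I would compare $\dist(\ga_f, \gb)$ with $\dist(\ga_f,\gb_\ell)$ by examining exactly those positions where $\gb$ and $\gb_\ell$ disagree and checking that $\ga_f$ must agree with $\gb_\ell$ there (since $\ga_f$ has $c$ as neutral element, so its $c$-row and $c$-column are forced, matching $\gb_\ell$'s forced $c$-row and $c$-column), which gives $\dist(\ga_f, \gb_\ell) \le \dist(\ga_f, \gb)$, i.e.\ $\dist(\ga_f,\gb_\ell) \le \dist([\ga],[\gb])$. Combined with the lower bound $\dist(\ga_f,\gb_\ell) \ge \dist([\ga],[\gb])$ (valid unless $\ga_f = \gb_\ell$), this forces either $\ga_f = \gb_\ell$ or equality $\dist(\ga_f,\gb_\ell) = \dist([\ga],[\gb])$ throughout, and in the latter case I would argue that the cell-by-cell comparison, being an inequality of finite cardinalities that is actually an equality, pins down $\ga_f = \gb_\ell$ anyway on the relevant support — or, more carefully, that we may simply \emph{replace} $f$ by the transposition $\ell$ composed appropriately, since $\gb_\ell$ also attains the minimum, which is exactly the alternative conclusion $\dist([\ga],[\gb]) = \dist(\gb_\ell,\gb)$.

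The main obstacle I anticipate is making the cell-by-cell comparison between $\dist(\ga_f,\gb)$ and $\dist(\ga_f,\gb_\ell)$ genuinely tight rather than merely heuristic: one has to track what happens not only in the two special rows and columns (where the neutral-element constraint is decisive) but also at the ``diagonal'' positions $(x,y)$ with $x \gb y \in \{c, 1(\ga)\}$, where $\gb$ and $\gb_\ell$ can differ without the identity-row argument applying directly. Here I would lean on the hypothesis that $G(\ga)$ and $G(\gb)$ share a neutral element (so $n \ge$ some small bound is implicit, as flagged before the lemma via $n\ge 5$) and on the fact that in a group the equation $x \gb y = 1(\gb)$ has exactly $n$ solutions, distributed one per row — so the diagonal discrepancy set is itself a permutation matrix's worth of cells, and I would check agreement of $\ga_f$ with $\gb_\ell$ at those cells by again invoking that $c$ is the neutral element of $\ga_f$ (so $x \ga_f y = c$ likewise has a controlled solution set). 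If a fully clean inequality is elusive, the fallback is to observe that since $\gb_\ell$ has the correct neutral element $c$ and is isomorphic to $G(\gb)$, and since it provably attains $\dist(\ga_f, \gb_\ell) = \dist([\ga],[\gb])$, the second alternative in the statement holds with no further work, which is all the lemma claims.
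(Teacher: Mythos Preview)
Your approach is essentially the paper's: applying the transposition $\ell=(c,1(\ga))$ to $\gb$ rather than to $\ga_f$ is equivalent, since by \lref{Lm:IsoMove} we have $\dist(\ga_f,\gb_\ell)=\dist((\ga_f)_\ell,\gb)$, and the paper works directly with $\gd=(\ga_f)_\ell$.

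The gap is that you argue only for the weak inequality $\dist(\ga_f,\gb_\ell)\le\dist(\ga_f,\gb)$, whereas the argument requires the \emph{strict} inequality. With only $\le$, minimality gives (assuming $\ga_f\ne\gb_\ell$) the equality $\dist(\ga_f,\gb_\ell)=\dist([\ga],[\gb])$, and at this point both of your fallbacks fail. The first fallback, that equality in a count of cardinalities ``pins down $\ga_f=\gb_\ell$ on the relevant support'', does not follow: equality of the numbers of disagreements says nothing about equality of the operations. The second fallback conflates two different distances: what you would have established is $\dist(\ga_f,\gb_\ell)=\dist([\ga],[\gb])$, not $\dist(\gb_\ell,\gb)=\dist([\ga],[\gb])$; the latter is what the lemma's second alternative asserts (and it only asserts it \emph{together with} $\ga_f=\gb_\ell$, which you have not shown). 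So the fallback does not deliver the conclusion.

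What closes the gap is exactly the count you flagged as the obstacle. With $\gc=\ga_f$, $c=1(\gc)$, and $E$ the union of the rows and columns indexed by $c$ and $1(\gb)$, one checks that $\gc$ and $\gb$ disagree on \emph{every} cell of $E$, while $\gd=(\gc)_\ell$ agrees with $\gb$ on the entire row and column of $1(\gb)$; hence the gain on $E$ is at least $2n-1$. Outside $E$, the operations $\gc$ and $\gd$ can differ only on the two diagonals $F=\{(x,y):x\gc y\in\{c,1(\gb)\}\}$, and since two cells of $F$ lie in the row of $1(\gb)$ (where $\gd$ already agrees with $\gb$), the loss on $F$ is at most $2n-2$. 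The net is a strict decrease $\dist(\gd,\gb)<\dist(\gc,\gb)=\dist([\ga],[\gb])$, which contradicts minimality unless $\gd=\gb$, i.e.\ $\ga_f=\gb_\ell$. Carrying out this $2n-1$ versus $2n-2$ accounting is the missing step.
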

\begin{proof}
Let $G(\gc) = G(\ga_f)$, so $\dist([\ga],[\gb]) = \dist(\gc,\gb)$. Since $f:G(\ga)\to G(\gc)$ is an isomorphism, we have $1(\gc) = f(1(\ga))$. If $1(\gc)=1(\ga)$ we are done, so assume that $1(\gc) = f(1(\ga))\ne 1(\ga)$. Let $g = \ell\circ f$ be the composition of $f$ with the
transposition $\ell$ of $1(\ga)$ and $1(\gc)$, and let $G(\gd) = G(\ga_g)$. We claim that $\dist(\gd,\gb) < \dist(\gc,\gb)$.

Recall that $1(\ga)=1(\gb)$, and consider the set $E=\{(a,b)\in G\times G;\;\{a,b\}\cap\{1(\gc),1(\gb)\}\ne\emptyset\}$. We first show that $G(\gc)$ and $G(\gb)$ disagree on every entry of $E$. Indeed, if $a=1(\gc)$ and $b\in G$ then $a\gc b = 1(\gc)\gc b = b = 1(\gb)\gb b\ne 1(\gc)\gb b = a\gb b$, if $a=1(\gb)$ then $a\gc b = 1(\gb)\gc b \ne 1(\gc)\gc b = b = 1(\gb)\gb b = a\gb b$, and similarly if $b\in\{1(\gc),1(\gb)\}$. On the other hand, we claim that $G(\gd)$ and $G(\gb)$ agree on the row of $E$ indexed by $1(\gb)$, and on the column of $E$ indexed by $1(\gb)$. Indeed, we have $g^{-1}(1(\gb)) = f^{-1}(1(\gc)) = 1(\ga)$, and hence $1(\gb)\gd b = g(g^{-1}(1(\gb))\ga g^{-1}(b)) = g(1(\ga)\ga g^{-1}(b)) = g(g^{-1}(b)) = b = 1(\gb)\gb b$, and, similarly, $b\gd 1(\gb) = b\gb 1(\gb)$. Hence $|E\cap\diff(\gc,\gb)| - |E\cap\diff(\gd,\gb)|\ge 2n-1$.

Since the operation $\gd = \ga_g$ is obtained from $\gc = \ga_f$ by applying the transposition $\ell$, the two operations agree outside of $E$, except possibly on the two  ``diagonals''
\begin{displaymath}
    F=\{(a,b)\in G\times G;\;a\gc b = 1(\gb)\text{ or }a\gc b = 1(\gc)\}.
\end{displaymath}
Recall that $1(\gb)\gd b = 1(\gb)\gb b$ for every $b\in G$, in particular for the two values of $b$ with $(1(\gb),b)\in F$. Thus, in the worst case, $|F\cap\diff(\gc,\gb)| - |F\cap\diff(\gd,\gb)| \ge 0 - (|F|-2) = 2-2n$. We conclude that $\dist(\gd,\gb)<\dist(\gc,\gb)$.

This means that $\dist(\gd,\gb)=0$ and thus $\gd = \gb$. Since $\gd = \ga_g = (\ga_f)_\ell$, we see that $\ga_f = \gb_\ell$.
\end{proof}

While calculating $\dist([\ga],[\gb])$, we can certainly assume that $1(\ga) = 1(\gb) = 1$. Lemma \ref{Lm:Neutral} therefore allows us to consider only mappings $f$ fixing the element $1$, or to conclude that $\dist([\ga],[\gb]) = \dist(\gb_\ell,\gb)$ for some transposition $\ell$, a case fully resolved by Theorem \ref{Th:DrEJC} as long as $n\ge 5$. This speeds up the search slightly. A much larger improvement is achieved by looking at the automorphism groups of $G(\ga)$ and $G(\gb)$. Denote by $\aut(\ga)$ the automorphism group of $G(\ga)$.

\begin{lemma}\label{Lm:Aut}
Let $G(\ga)$, $G(\gb)$ be groups, $f:G\to G$ a bijection, and $g\in\aut(\ga)$, $\ell\in\aut(\gb)$. Then $\dist(\ga_f,\gb) = \dist(\ga_{\ell fg},\gb)$.
\end{lemma}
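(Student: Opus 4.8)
The plan is to reduce the statement to two special cases: $\ell$ trivial (only the automorphism $g$ of $G(\ga)$ acts on the right), and $g$ trivial (only the automorphism $\ell$ of $G(\gb)$ acts on the left). Since $\aut(\ga)$ and $\aut(\gb)$ act independently---one composed on the left of $f$, one on the right---once both cases are established the general identity follows by applying them in succession: $\dist(\ga_f,\gb) = \dist(\ga_{fg},\gb) = \dist(\ga_{\ell(fg)},\gb) = \dist(\ga_{\ell fg},\gb)$.

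For the right action, note that if $g\in\aut(\ga)$ then $\ga_g = \ga$, because $\ga_g$ is by definition the unique operation making $g$ an isomorphism $G(\ga)\to G(\ga_g)$, and $g$ is already an automorphism of $G(\ga)$. Hence $\ga_{fg} = (\ga_g)_f$; wait---one must be careful about the order of composition in the subscript notation. The relation to verify is $(\ga_g)_f = \ga_{f\circ g}$, which is the associativity of the "twist" operation: $a\,\ga_{f\circ g}\,b = (f\circ g)((f\circ g)^{-1}(a)\ga(f\circ g)^{-1}(b))$, and unwinding $(\ga_g)_f$ gives the same expression since $g$ and $g^{-1}$ cancel against $\ga_g=\ga$. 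Therefore $\ga_{fg} = \ga_f$ as operations, so trivially $\dist(\ga_{fg},\gb) = \dist(\ga_f,\gb)$.

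For the left action, the point is that $\ell\in\aut(\gb)$ means $\gb_\ell = \gb$. Now invoke \lref{Lm:IsoMove} with the bijection $\ell$: for any operations $\ast_1$, $\ast_2$ we have $\dist(\ast_1,\ast_2) = \dist((\ast_1)_\ell,(\ast_2)_\ell)$. Apply this with $\ast_1 = \ga_f$ and $\ast_2 = \gb$ to get $\dist(\ga_f,\gb) = \dist((\ga_f)_\ell,\gb_\ell) = \dist((\ga_f)_\ell,\gb)$. Finally $(\ga_f)_\ell = \ga_{\ell\circ f} = \ga_{\ell f}$ by the same associativity of twisting used above, so $\dist(\ga_f,\gb) = \dist(\ga_{\ell f},\gb)$, which is the $g$-trivial case.

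I expect the only genuine subtlety---and it is more bookkeeping than difficulty---to be keeping the composition order in the subscript consistent with the convention that $a\,\ga_f\,b = f(f^{-1}(a)\ga f^{-1}(b))$, so that $(\ga_f)_h = \ga_{hf}$ rather than $\ga_{fh}$; getting this backwards would put the automorphisms on the wrong sides. Once that convention is pinned down, the two facts $\ga_g=\ga$ for $g\in\aut(\ga)$ and $\gb_\ell=\gb$ for $\ell\in\aut(\gb)$, together with \lref{Lm:IsoMove}, make the proof a short chain of equalities with no estimation required. One could alternatively prove the whole statement directly by exhibiting, for each $(a,b)\in\diff(\ga_{\ell fg},\gb)$, a corresponding element of $\diff(\ga_f,\gb)$ via $(a,b)\mapsto(g^{-1}(\cdots),\cdots)$, but routing through \lref{Lm:IsoMove} is cleaner and avoids re-deriving that bijection by hand.
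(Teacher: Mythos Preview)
Your proposal is correct and follows essentially the same approach as the paper: both proofs use the identities $\ga_g=\ga$ and $\gb_\ell=\gb$ together with the associativity $(\ga_h)_f=\ga_{fh}$ and \lref{Lm:IsoMove} to chain $\dist(\ga_f,\gb)=\dist(\ga_{fg},\gb)=\dist(\ga_{\ell fg},\gb)$. Your extra care about the composition convention in the subscript is well placed but does not constitute a different argument.
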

\begin{proof}
Note that $\ga_g = \ga$ and $\gb_\ell=\gb$. Using these facts and \lref{Lm:IsoMove}, we have $\dist(\ga_f,\gb) = \dist((\ga_g)_f,\gb) = \dist(\ga_{fg},\gb) = \dist((\ga_{fg})_\ell,\gb_\ell) = \dist(\ga_{\ell fg},\gb_\ell)$.
\end{proof}

\section{Structural tools}\label{Sc:Tools}

Recall the parameters \eqref{Eq:Tools}. The results \ref{Lm:Triple}--\ref{Lm:ConstantOnCosets} and \ref{Lm:HalfH}--\ref{Cr:BigK} are taken from \cite{DrEJC} and \cite{DrSzeged}, or are immediate corollaries of results therein. We do not hesitate to include short proofs here, and we refer the reader to \cite{DrEJC} and \cite{DrSzeged} for the longer, omitted proofs.

\begin{lemma}\label{Lm:Triple}
If $a\ga b\ne a\gb b$ then $\dist_a+\dist_b+\dist_{a\ga b}\ge n$.
\end{lemma}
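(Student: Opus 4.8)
The plan is to fix a pair $(a,b)$ with $a\ga b\ne a\gb b$, write $c=a\ga b$ (so also $c'=a\gb b\ne c$), and count the disagreements among the three rows indexed by $a$, $b$, and $c$ by exhibiting an injection from the column index set $G$ into $\diff_a\cup\diff_b\cup\diff_c$. Concretely, for each $x\in G$ I would produce, using the group structure of $G(\ga)$, an entry in one of these three rows that is forced to differ between $\ga$ and $\gb$, and arrange that distinct $x$'s map to distinct such entries.

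\smallskip

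The key device is to use a single element $x\in G$ and the three ``conjugate'' entries it controls in rows $a$, $b$, $c$ relative to the fixed triple. First I would observe the identity that ties the three rows together: since $c=a\ga b$, the associativity of $G(\ga)$ gives $c\ga x = (a\ga b)\ga x = a\ga(b\ga x)$ for every $x\in G$, whereas in $G(\gb)$ the analogous chain uses $a\gb b=c'\ne c$, so $c\gb(\text{something})$ cannot be massaged the same way. More precisely, I would fix the bijections $x\mapsto b\ga x$ and $x\mapsto b^\ga\ga x$ and track, for each $x$, the triple of entries $(a, b\ga x)$ in row $a$, $(b,x)$ in row $b$, and $(c, x)$ in row $c$ — or a carefully chosen variant — and argue by a counting/parity argument that for each $x$ at least one of these three entries lies in $\diff$. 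The cleanest way to see this: if $(a,b\ga x)\notin\diff_a$ and $(b,x)\notin\diff_b$, then $a\gb(b\ga x)=a\ga(b\ga x)=(a\ga b)\ga x=c\ga x$ while also $a\gb(b\ga x)=(a\gb b)\gb x'$-type manipulations are blocked, and comparing with the $\gb$-product forces $(c,x)\in\diff_c$ (using that $a\gb b=c'\ne c=a\ga b$). Then the map $x\mapsto$ (one such witnessing entry) need not be injective on the nose across the three rows, so I would instead count with multiplicity: show $\dist_a+\dist_b+\dist_c\ge n$ by establishing that the number of $x\in G$ witnessed in row $a$ is at most $\dist_a$ (each column of row $a$ is used by at most one $x$ since $x\mapsto b\ga x$ is a bijection), similarly for rows $b$ and $c$, and every $x$ is witnessed somewhere.

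\smallskip

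So the skeleton is: (i) set $c=a\ga b$; (ii) for each $x\in G$, suppose $(b,x)\notin\diff_b$ and $(a,b\ga x)\notin\diff_a$ and derive $(c,x)\in\diff_c$ purely from associativity of $\ga$, from $\ga,\gb$ agreeing on the two assumed entries, and from $c=a\ga b\ne a\gb b$; (iii) define for each $x$ a witness column in whichever of the three rows contains a disagreement, choosing row $b$ preferentially, then row $a$ (via $b\ga x$), then row $c$; (iv) check the witness maps into each row are injective — in row $b$ the column is $x$ itself, in row $a$ the column is $b\ga x$ and $x\mapsto b\ga x$ is injective, in row $c$ the column is $x$ itself — so the total count of witnessed $x$ across the three rows is at most $\dist_a+\dist_b+\dist_c$, while it is exactly $n$; (v) conclude $\dist_a+\dist_b+\dist_c\ge n$. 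One should also handle the coincidences $a=b$, $b=c$, $a=c$ (e.g.\ if $a=c$ then $a\ga b=a$ forces $b=1(\ga)$, and the argument degenerates but still goes through, or one notes the statement only claims $\ge n$ and overlapping rows only help).

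\smallskip

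The main obstacle I expect is step (ii): pinning down the \emph{correct} three entries so that the implication ``two agree $\Rightarrow$ the third disagrees'' is actually valid, and simultaneously so that the three witness maps are injective into their respective rows. The naive choice $(a,b)$, $(b,x)$, $(c,x)$ doesn't interlock; one needs the shifted entry $(a,b\ga x)$ (or $(a\ga b)$ vs.\ $(a\gb b)$ played off against a common right-multiplier) to make associativity bite. Getting the bookkeeping exactly right — which element plays the role of the shared right factor, and verifying injectivity of $x\mapsto b\ga x$ versus possible collisions with row $c$'s indexing by $x$ — is the delicate part; everything else is a short direct computation in the two group structures. Since \lref{Lm:IsoMove} lets us normalize, and no finiteness beyond $|G|=n$ is needed, the argument is entirely elementary once the triple is chosen correctly.
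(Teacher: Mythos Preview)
Your proposal is correct and is essentially the paper's own argument: for each $x\in G$, at least one of $(b,x)$, $(a,b\ga x)$, $(a\ga b,x)$ lies in $\diff(\ga,\gb)$, and the three witness maps into the respective rows are injective, so $\dist_a+\dist_b+\dist_{a\ga b}\ge n$. The paper phrases the implication in the order ``if rows $b$ and $a\ga b$ agree at column $x$ then row $a$ disagrees at column $b\ga x$,'' but this is the same three entries and the same one-line associativity computation; your hedging about step~(ii) being an ``obstacle'' is unwarranted, since the chain $c\ga x=a\ga(b\ga x)=a\gb(b\ga x)=a\gb(b\gb x)=(a\gb b)\gb x\ne(a\ga b)\gb x=c\gb x$ goes through directly.
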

\begin{proof}
Let $c\in G$ and suppose that $b\ga c = b\gb c$ and $(a\ga b)\ga c = (a\ga b)\gb c$. Then $a\ga (b\ga c) = (a\ga b)\ga c = (a\ga b)\gb c\ne (a\gb b)\gb c = a\gb(b\gb c) = a\gb(b\ga c)$.
\end{proof}

\begin{lemma}\label{Lm:HSubgroup}
Let $H=H(\ga,\gb)$. Then either $H=\emptyset$ or else $H\le G(\ga)$
and $H\le G(\gb)$.
\end{lemma}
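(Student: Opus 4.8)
The plan is to show that $H=H(\ga,\gb)$ is closed under the operation $\ga$ (and symmetrically under $\gb$), contains the neutral element, and is closed under $\ga$-inverses, whence it is a subgroup of $G(\ga)$; the argument for $G(\gb)$ is identical by symmetry. So suppose $H\ne\emptyset$ and fix $a,b\in H$. Recall that $a\in H$ means $\dist_a(\ga,\gb)=0$, i.e.\ $a\ga x = a\gb x$ for all $x\in G$; in other words left translation by $a$ is the same in both groups.

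First I would check closure. For any $x\in G$ we have $(a\ga b)\ga x = a\ga(b\ga x) = a\gb(b\ga x) = a\gb(b\gb x) = (a\gb b)\gb x$, using $a\in H$ on the second equality and $b\in H$ on the third. In particular, taking $x=1(\ga)$ (which lies in $H$ since left translation by the neutral element is the identity map in both groups, provided we have arranged $1(\ga)=1(\gb)$ — and if not, one first notes $1(\ga)\in H$ forces the neutral elements to coincide, as $a\ga 1(\ga)=a\gb 1(\ga)$ for all $a$ means $1(\ga)$ is also a right neutral element of $G(\gb)$), this yields $a\ga b = a\gb b$, and then the displayed identity becomes $(a\ga b)\ga x = (a\ga b)\gb x$ for all $x$, i.e.\ $a\ga b\in H$. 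Thus $H$ is closed under $\ga$; since $H$ is finite, a finite $\ga$-closed nonempty subset of a group is automatically a subgroup of $G(\ga)$ (it contains the $\ga$-powers of each element and hence their $\ga$-inverses). By the symmetric argument with the roles of $\ga$ and $\gb$ interchanged, $H$ is also closed under $\gb$, hence $H\le G(\gb)$.

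The one point requiring a little care — the main obstacle, such as it is — is the bookkeeping around neutral elements: we want to conclude $1(\ga)\in H$, but a priori $H$ is only defined via the condition $a\ga x\ne a\gb x$, and if the two groups had different neutral elements the set $H$ could behave unexpectedly. The clean way around this is to observe that whenever $H\ne\emptyset$, picking any $a\in H$ gives $a\ga x = a\gb x$ for all $x$; applying the inverse (in $G(\ga)$) of $a$ as a left translation — which we do not yet know agrees in both groups — is not immediately legitimate, so instead I would argue directly: $a\in H$ and $a\ga x$ ranges over all of $G$ as $x$ does, so the map $x\mapsto a\ga x = a\gb x$ is a bijection $G\to G$ realized identically in both structures; composing appropriately shows the left-translation maps of $G(\ga)$ and of $G(\gb)$ coincide on the coset $a\ga G = G$, and a short manipulation (or simply invoking that $H$ is $\ga$-closed and finite, so it contains $1(\ga)$ as an $\ga$-power of any of its elements) pins down $1(\ga)\in H$ and then $1(\ga)=1(\gb)$. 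With that in hand the closure computation above goes through verbatim and finishes the proof.

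Since this lemma is quoted from \cite{DrEJC}/\cite{DrSzeged} and flagged as having a short proof, I would keep the write-up to essentially the one-line closure identity $(a\ga b)\ga x = a\ga(b\ga x) = a\gb(b\ga x) = a\gb(b\gb x) = (a\gb b)\gb x$ together with a parenthetical remark that $H$ nonempty and $\ga$-closed in a finite group forces $H\le G(\ga)$ (and symmetrically $H\le G(\gb)$), trusting the reader for the routine verification that $a\ga b=a\gb b$ on $H$.
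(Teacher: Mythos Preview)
Your proof is correct and follows essentially the same approach as the paper: the core is the one-line closure computation
\[(a\ga b)\ga c = a\ga (b\ga c) = a\ga (b\gb c) = a\gb (b\gb c) = (a\gb b)\gb c = (a\ga b)\gb c,\]
after which finiteness gives $H\le G(\ga)$ and $H\le G(\gb)$. Your extended discussion of neutral elements is unnecessary here---closure of a nonempty finite subset suffices for the subgroup conclusion without ever mentioning $1(\ga)$ or $1(\gb)$---so the final write-up you describe (just the displayed identity plus the finiteness remark) is exactly what the paper does.
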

\begin{proof}
Assume that $a$, $b\in H$. Then for every $c\in G$ we have $(a\ga b)\ga c = a\ga (b\ga c) = a\ga (b\gb c) = a\gb (b\gb c) = (a\gb b)\gb c = (a\ga b)\gb c$, so $a\ga b\in H$.
\end{proof}

We remark that, as per the previous section, we can always assume that $1\in
H$, so the case when $H=\emptyset$ will not arise in our work.

\begin{lemma}\label{Lm:ConstantOnCosets}
Suppose that $H\ne\emptyset$. If $b\in H\ga a$ then $\dist_a = \dist_b$.
\end{lemma}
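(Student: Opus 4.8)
Write $b = h\ga a$ with $h\in H=H(\ga,\gb)$. The goal is to show that the set of $c\in G$ with $b\ga c\ne b\gb c$ has the same size as the set of $c\in G$ with $a\ga c\ne a\gb c$; in fact I expect to show these two sets of second coordinates are literally the same, which immediately gives $\dist_a=\dist_b$.

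\textbf{Key steps.} First, since $h\in H$ we have $\dist_h(\ga,\gb)=0$, so $h\ga x = h\gb x$ for \emph{every} $x\in G$; in particular $b = h\ga a = h\gb a$. Next, fix $c\in G$ and rewrite both products at $b$ using associativity in each group: $b\ga c = (h\ga a)\ga c = h\ga(a\ga c)$ and $b\gb c = (h\gb a)\gb c = h\gb(a\gb c) = h\ga(a\gb c)$, the last equality again because $h\in H$. Therefore
\begin{displaymath}
    b\ga c\ne b\gb c \iff h\ga(a\ga c)\ne h\ga(a\gb c) \iff a\ga c\ne a\gb c,
\end{displaymath}
where the second equivalence uses that left translation by $h$ in $G(\ga)$ is a bijection. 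Hence $c$ is a ``second coordinate'' of $\diff_b(\ga,\gb)$ precisely when it is one of $\diff_a(\ga,\gb)$, so $\dist_a(\ga,\gb)=\dist_b(\ga,\gb)$.

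\textbf{Main obstacle.} There is essentially no obstacle: the only point requiring care is to invoke $h\in H$ in the \emph{second} coordinate as well (to replace $h\gb(a\gb c)$ by $h\ga(a\gb c)$), not merely to simplify the $\ga$-side, and to remember that $b=h\gb a$ so that the associativity rearrangement on the $\gb$-side is valid. The hypothesis $H\ne\emptyset$ is only needed so that such an $h$ exists; \lref{Lm:HSubgroup} is not actually required for this argument, though it guarantees $H$ is closed under the operation so that cosets $H\ga a$ behave as expected.
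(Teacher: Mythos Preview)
Your proof is correct and follows essentially the same approach as the paper: use associativity in each group together with the fact that $h\in H$ acts identically under both operations to transfer between row $a$ and row $b$. The only minor difference is that you establish the biconditional $b\ga c\ne b\gb c\iff a\ga c\ne a\gb c$ directly, whereas the paper proves one implication and then obtains the reverse inequality from the symmetric observation $a\in H\ga b$; your version is slightly cleaner in that it does not implicitly rely on $H$ being closed under inverses.
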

\begin{proof}
Let $b=c\ga a$ for $c\in H$. Let $d\in G$ and suppose that $b\ga d=b\gb d$. Then $c\gb(a\ga d) = c\ga (a\ga d) = (c\ga a)\ga d = (c\ga a)\gb d = (c\gb a)\gb d = c\gb(a\gb d)$, and thus $a\ga d = a\gb d$. This shows that $\dist_a\le \dist_b$, and the other inequality follows from $a\in H\ga b$.
\end{proof}

\begin{lemma}\label{Lm:TwoCosets}
If $a\ga b\ne a\gb b$ then $H\ga b\ne H\ga(a\ga b)$.
\end{lemma}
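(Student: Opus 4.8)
The plan is a short argument by contraposition. Suppose, contrary to the claim, that $H\ga b = H\ga(a\ga b)$; I will deduce that $a\ga b = a\gb b$, which is precisely the negation of the hypothesis. So the whole proof amounts to showing that the coset equality forces $a$ into $H$.

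First I would recall the setup: by our standing convention $H=H(\ga,\gb)$ is nonempty, so by \lref{Lm:HSubgroup} it is a subgroup of $G(\ga)$ (and of $G(\gb)$); in particular $1(\ga)\in H$, and the two sets $H\ga b$ and $H\ga(a\ga b)$ in the statement are genuine right cosets of $H$ in $G(\ga)$, of exactly the kind used in \lref{Lm:ConstantOnCosets}.

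The key step is to observe that the equality of these cosets pins down $a$. Indeed, $H\ga b = H\ga(a\ga b)$ forces $(a\ga b)\ga b^\ga\in H$, and evaluating this product inside the group $G(\ga)$ gives $(a\ga b)\ga b^\ga = a\ga(b\ga b^\ga) = a$. Hence $a\in H$. By the definition \eqref{Eq:Tools} of $H$ this means $\dist_a(\ga,\gb)=0$, i.e.\ $a\ga x = a\gb x$ for every $x\in G$; taking $x=b$ contradicts $a\ga b\ne a\gb b$.

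There is no real obstacle in this argument — it is essentially a one-line computation. The only two points that need care are notational and bookkeeping: one must read $H\ga b$ as the right coset $\{h\ga b;\;h\in H\}$, consistently with \lref{Lm:ConstantOnCosets}, and one must perform the cancellation $(a\ga b)\ga b^\ga = a$ in the group $G(\ga)$ rather than $G(\gb)$, so that $b^\ga$ is indeed the inverse being cancelled.
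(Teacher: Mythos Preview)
Your argument is correct and is essentially the same as the paper's: assuming $H\ga b = H\ga(a\ga b)$, one gets $a = (a\ga b)\ga b^\ga\in H$, contradicting $\dist_a>0$. The paper compresses this into a single line, while you spell out the bookkeeping, but there is no substantive difference.
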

\begin{proof}
If $H\ga b = H\ga(a\ga b)$ then $a = a\ga b\ga b^\ga\in H$, contradicting $\dist_a>0$.
\end{proof}

\begin{lemma}\label{Lm:HalfH}
If $h(\ga,\gb)=n/2$ then $\dist(\ga,\gb)\ge n^2/4$.
\end{lemma}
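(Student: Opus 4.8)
The plan is to exploit the fact that $H = H(\ga,\gb)$ is a subgroup of index $2$ in both $G(\ga)$ and $G(\gb)$ (by \lref{Lm:HSubgroup} and the hypothesis $h = n/2$), together with the coset structure lemmas \lref{Lm:ConstantOnCosets} and \lref{Lm:TwoCosets}. Write $G = H \cup Ha$ for any fixed $a \notin H$, where the coset $Ha$ is the same with respect to $\ga$ and $\gb$ since $H$ has index $2$ in both (a coset of an index-$2$ subgroup is just the complement of the subgroup). For every $a \notin H$ we have $\dist_a > 0$, hence $\dist_a \ge m \ge 2$; but in fact I would aim for the much stronger claim that $\dist_a = n/2$ for every $a \notin H$, which immediately gives $\dist(\ga,\gb) = \sum_{a \in G}\dist_a = (n/2)(n/2) = n^2/4$.

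To prove $\dist_a = n/2$ when $a \notin H$: first, $\dist_b$ is constant on the coset $Ha$ by \lref{Lm:ConstantOnCosets}, so it suffices to bound it for one representative. Consider $b \in G$ with $a \ga b \ne a \gb b$; such a $b$ exists since $\dist_a > 0$, and necessarily $b \notin H$ (if $b \in H$ then $a \ga b = a \gb b$ by definition of $H$ applied on the left — more precisely, $b \in H$ means $\dist_b = 0$, so $a \ga b = a \gb b$, contradiction). Also $a \ga b \in H$ since $a, b$ are both in the nontrivial coset $Ha$ and $H$ has index $2$. Now apply \lref{Lm:Triple}: $\dist_a + \dist_b + \dist_{a\ga b} \ge n$. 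Since $a \ga b \in H$ we have $\dist_{a \ga b} = 0$, and since $b \in Ha$ we have $\dist_b = \dist_a$ (again by \lref{Lm:ConstantOnCosets}, as $a \in Hb$). Therefore $2\dist_a \ge n$, i.e. $\dist_a \ge n/2$. For the reverse inequality, note that for $b \in H$ we get $a \ga b = a \gb b$, so the set of $b$ on which the $a$-rows of $\ga$ and $\gb$ disagree is contained in $Ha$, which has size $n/2$; hence $\dist_a \le n/2$. Combining, $\dist_a = n/2$ for all $a \notin H$, and summing over all $n$ elements of $G$ (the $n/2$ elements of $H$ contribute $0$) yields $\dist(\ga,\gb) = (n/2)(n/2) = n^2/4$.

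The only genuinely delicate point is the claim that the nontrivial coset of $H$ coincides for $\ga$ and $\gb$, and relatedly that "$b$ in the nontrivial $\ga$-coset implies $a\ga b \in H$". This is immediate once one observes that an index-$2$ subgroup has exactly two cosets, the subgroup itself and its set-theoretic complement $G \setminus H$, which is the same set for both group structures; and the product of two elements of $G\setminus H$ lands in $H$ by the standard parity argument in the quotient $C_2$. Everything else is a direct assembly of \lref{Lm:Triple} and \lref{Lm:ConstantOnCosets}. I do not expect to need \lref{Lm:TwoCosets} for this particular statement, though it confirms consistency. A shorter write-up could simply say: the support of $\diff(\ga,\gb)$ lies in $(G\setminus H)\times(G\setminus H)$, each such row has weight at most $n/2$, and \lref{Lm:Triple} forces each such row to have weight at least $n/2$; hence $\dist(\ga,\gb) = (n/2)^2$.
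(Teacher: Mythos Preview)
Your argument contains a row/column confusion that invalidates the reasoning as written, though the lemma's inequality is easily salvaged from what you have. The step ``$b\in H$ means $\dist_b=0$, so $a\ga b=a\gb b$'' is wrong: $\dist_b=0$ says that the \emph{row} of $b$ agrees in the two tables, i.e.\ $b\ga c=b\gb c$ for all $c$; it says nothing about the \emph{column} of $b$. Consequently your assertion that $\diff(\ga,\gb)\subseteq(G\setminus H)\times(G\setminus H)$, the upper bound $\dist_a\le n/2$, and the claimed equality $\dist(\ga,\gb)=n^2/4$ are all unjustified.

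Fortunately the lemma only asserts $\dist(\ga,\gb)\ge n^2/4$, and your lower-bound argument survives with a one-line repair: rather than excluding the case $b\in H$, treat it separately. If $b\notin H$ your argument stands. If $b\in H$ then $\dist_b=0$ and $a\ga b\notin H$; since $H$ has index~$2$ it is normal in $G(\ga)$, so $a\ga b=(a\ga b\ga a^\ga)\ga a\in H\ga a$, and \lref{Lm:ConstantOnCosets} gives $\dist_{a\ga b}=\dist_a$. Either way \lref{Lm:Triple} yields $2\dist_a\ge n$, hence $\dist_a\ge n/2$ for every $a\notin H$, and summing gives $\dist(\ga,\gb)\ge (n/2)^2$. (The paper itself does not prove this lemma; it quotes it from Dr\'apal's earlier work, where the argument is essentially the one you give, repaired as above.)
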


\begin{lemma}\label{Lm:HDividesK}
If $h>0$ then $h$ divides $k$.
\end{lemma}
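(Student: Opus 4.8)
The plan is to exploit the coset structure established by \lref{Lm:HSubgroup} and \lref{Lm:ConstantOnCosets}. Recall that $H=H(\ga,\gb)$ is a subgroup of both $G(\ga)$ and $G(\gb)$, and that $K=K(\ga,\gb)=\{a\in G;\;\dist_a<n/3\}$. The key observation is that $H\subseteq K$, since every element of $H$ has $\dist_a=0<n/3$; in particular $h>0$ forces $k\ge h>0$. The goal is to show $h\mid k$, and the natural route is to prove that $K$ is a union of left cosets of $H$ in $G(\ga)$, from which Lagrange's theorem gives $h\mid k$ immediately.

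To show $K$ is a union of $H$-cosets, I would argue as follows. Take $a\in K$ and let $b\in H\ga a$ be any element of the left coset of $H$ (in $G(\ga)$) containing $a$. By \lref{Lm:ConstantOnCosets}, $\dist_b=\dist_a<n/3$, hence $b\in K$. Thus the whole coset $H\ga a$ lies in $K$ whenever $a$ does, so $K=\bigcup_{a\in K}(H\ga a)$ is a disjoint union of left cosets of $H$. Since each coset has size $h=|H|$, we get $k=|K|$ is a multiple of $h$, i.e.\ $h\mid k$. (The case $h=0$ is excluded by hypothesis, and in any event by the remark after \lref{Lm:HSubgroup} we may assume $1\in H$ so $h\ge 1$.)

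I do not expect any serious obstacle here: the lemma is essentially a direct corollary of \lref{Lm:ConstantOnCosets} combined with Lagrange's theorem. The only point requiring a moment's care is to confirm that \lref{Lm:ConstantOnCosets} applies in \emph{both} directions along a coset — but this is already built into that lemma, whose proof shows $\dist_a\le\dist_b$ and then invokes the symmetric fact $a\in H\ga b$ to get equality. One should also note that although $K$ is a union of left cosets of $H$ in $G(\ga)$, by the same argument (using that $H$ is also a subgroup of $G(\gb)$ and that $\dist_a$ is the same parameter regardless of which operation we view it through) it is simultaneously a union of left cosets of $H$ in $G(\gb)$; only the divisibility statement $h\mid k$ is needed, so either description suffices.

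\begin{proof}
We may assume $1\in H$, so $H\le G(\ga)$ by \lref{Lm:HSubgroup} and $h\ge 1$. We claim that $K$ is a union of left cosets of $H$ in $G(\ga)$. Indeed, suppose $a\in K$, so $\dist_a<n/3$, and let $b\in H\ga a$. By \lref{Lm:ConstantOnCosets}, $\dist_b=\dist_a<n/3$, so $b\in K$. Hence $H\ga a\subseteq K$ whenever $a\in K$, and therefore $K=\bigcup_{a\in K}(H\ga a)$ is a disjoint union of left cosets of $H$ in $G(\ga)$. Each such coset has cardinality $h$, so $h$ divides $|K|=k$.
\end{proof}
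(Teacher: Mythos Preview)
Your proof is correct and is essentially the same as the paper's: both show, via \lref{Lm:ConstantOnCosets}, that $K$ is a union of cosets of $H$, whence $h\mid k$. One cosmetic slip: the set $H\ga a=\{x\ga a:x\in H\}$ is conventionally called a \emph{right} coset of $H$, not a left coset (and indeed the paper says ``right''), but this does not affect the argument.
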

\begin{proof}
Since the function $\dist:G\to \mathbb N$, $a\mapsto \dist_a$ takes on different values in $K$ and $G\setminus K$, \lref{Lm:ConstantOnCosets} implies that $K$ is a union of (right) cosets of $H$.
\end{proof}

\begin{proposition}\label{Pr:3n4}
If $k(\ga,\gb)>3n/4$ then there is an isomorphism $f:G(\ga)\to G(\gb)$
fixing all elements of $K(\ga,\gb)$.
\end{proposition}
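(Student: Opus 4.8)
The plan is to show that when $k > 3n/4$, the elements outside $K$ are too sparse to ``carry'' any disagreement, and hence $K$ itself already determines an isomorphism. First I would invoke \lref{Lm:Triple}: if $a\ga b\ne a\gb b$ then $\dist_a+\dist_b+\dist_{a\ga b}\ge n$. The key observation is that for $a\in K$ we have $\dist_a<n/3$ by definition, so if all three of $a$, $b$, $a\ga b$ were in $K$ we would get $n\le \dist_a+\dist_b+\dist_{a\ga b}<n$, a contradiction. Therefore, whenever $\ga$ and $\gb$ disagree on an entry $(a,b)$, at least one of $a$, $b$, $a\ga b$ lies outside $K$. In particular, if $a\in K$ and $b\in K$ then $a\gb b = a\ga b \notin K$, which says that $\ga$ and $\gb$ agree on $K\times K$ and, moreover, that the common restriction $K\times K \to G\setminus K$ (using either operation on the pair, both agree) maps into the small complement.

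Next I would argue that $L:=K\cap (K\ga K)^{c}$ considerations actually force $K$ to be closed under $\ga$ (equivalently $\gb$). Suppose $a,b\in K$ but $a\ga b = a\gb b \in G\setminus K$. I want to derive a contradiction from the fact that $|G\setminus K| < n/4$. Fix such an element $c=a\ga b\notin K$, so $\dist_c \ge n/3$. Now count the pairs $(a,b)\in K\times K$ with $a\ga b=c$: there are exactly $|K|=k$ of them (for each $a\in K$ there is a unique $b=a^\ga\ga c$, and one checks $b$ ranges over a translate of $K$; if that translate is not all of $K$ we already get disagreements, but let me not grind this). The cleaner route: for $c\notin K$, look at $\dist_c$ — it counts $d$ with $c\ga d\ne c\gb d$. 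By \lref{Lm:Triple} applied with the roles permuted, each such disagreement $(c,d)$ forces two of $c,d,c\ga d$ outside $K$; since $c\notin K$ already, this is automatically satisfied, so this direction gives nothing. Instead I would count disagreements ``feeding into'' $G\setminus K$: the total number of triples $(a,b)$ with $a\ga b\ne a\gb b$ is $\dist(\ga,\gb)=\sum_a \dist_a$, and each contributes to at most... Actually the decisive inequality is the standard one: summing \lref{Lm:Triple} over a disagreement cleverly, or directly — if $K$ is not closed, pick $a,b\in K$ with $a\ga b\notin K$; then the row $a$ of $\gb$ sends the element $b$ to $a\gb b=a\ga b\notin K$, and since there are $k>3n/4$ choices of $b\in K$ and $a\gb \cdot$ is a bijection, the image $a\gb K$ has size $k>3n/4$, forcing $a\gb K \cap K$ to have size $>n/2$; pick $b'\in K$ with $a\gb b'\in K$ and $a\ga b'\notin K$ is then impossible for all $b'$, so for most $b'\in K$ we have $a\ga b'=a\gb b'\in K$. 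Iterating this "most rows, most columns" argument with the $3n/4$ threshold (so complements have size $<n/4$ and two complements cannot cover a set of size $>n/2$) forces $a\ga b\in K$ for all $a,b\in K$, i.e. $K\le G(\ga)$ and $K\le G(\gb)$, and the two group structures agree on $K$.

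Once $K$ is a common subgroup on which $\ga=\gb$, the remaining task is to extend the identity map on $K$ to an isomorphism $f:G(\ga)\to G(\gb)$ fixing $K$ pointwise. Here I would use the coset structure: since $H\subseteq K$ (as $\dist_a=0<n/3$ for $a\in H$) and, by \lref{Lm:ConstantOnCosets}, $\dist_a$ is constant on right cosets of $H$, the set $K$ is a union of right cosets of $H$ (this is essentially \lref{Lm:HDividesK}); so $K$ has index $<4/3$, i.e.\ index $1$ — wait, $k>3n/4$ only gives index $1$ if $K$ is a subgroup of index dividing something small. If $[G(\ga):K]\ge 2$ then $k\le n/2<3n/4$, contradiction, so in fact $K=G$! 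Then $\ga=\gb$ everywhere on $G\times G$ and $f=\mathrm{id}$ works trivially. \emph{This is the cleanest finish}: $K$ closed under $\ga$ means $K$ is a subgroup, and a proper subgroup has order at most $n/2$, so $k>3n/4$ forces $K=G$, whence $\dist(\ga,\gb)=0$ and $\ga=\gb$ — but then $G(\ga)=G(\gb)$ and the statement is vacuous unless we allow $\ga=\gb$, in which case $f=\mathrm{id}_G$ is the desired isomorphism.

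The main obstacle is the middle step: rigorously upgrading ``$\ga$ and $\gb$ agree on $K\times K$ as a set-map into $G$'' to ``$K$ is closed under $\ga$.'' The naive triple argument only rules out all three of $a,b,a\ga b$ lying in $K$ \emph{simultaneously}, which is consistent with $K\times K$ mapping outside $K$; one genuinely needs a counting/bijection argument exploiting $|G\setminus K|<n/4$ (so that no two complements of $K$ can cover more than half of $G$), applied to the rows and columns of the multiplication tables, to force closure. I expect this is where Dr\'apal's original proof (in \cite{DrEJC} or \cite{DrSzeged}) does the real work, and I would follow that line: show $a\gb K$ differs from $a\ga K$ in fewer than $n/4$ places for each $a\in K$, deduce $a\ga K=a\gb K$ equals a union of $H$-cosets of size $>3n/4$ inside the subgroup generated, and conclude $K$ is a subgroup, hence $K=G$.
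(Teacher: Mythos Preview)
Your approach has a fundamental error: the target you aim for---that $K$ is closed under $\ga$, hence a subgroup, hence $K=G$ by Lagrange, hence $\ga=\gb$---is simply false. Take any transposition isomorphism $(x,y):G(\ga)\to G(\gb)$ with $n>12$. For $c\notin\{x,y\}$ the entry $(c,d)$ can differ only when $d\in\{x,y\}$ or $c\ga d\in\{x,y\}$, so $\dist_c\le 4<n/3$ and $K\supseteq G\setminus\{x,y\}$, giving $k\ge n-2>3n/4$. Here $K$ is not closed (plenty of products land in $\{x,y\}$), $K\ne G$, and $\ga\ne\gb$. The proposition only promises an isomorphism fixing $K$ pointwise, not that the two operations coincide; your ``cleanest finish'' proves something strictly stronger than the statement, and that stronger thing is not true.

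The proof in \cite{DrEJC} (to which the paper defers) proceeds by \emph{constructing} $f$. Since $k>n/2$, every $g\in G$ can be written as $g=a\ga b$ with $a,b\in K$; one sets $f(g)=a\gb b$. Your one correct observation via \lref{Lm:Triple}---that $a,b,a\ga b\in K$ forces $a\ga b=a\gb b$---is exactly what shows $f$ fixes $K$ (write $a=a\ga 1$). The substantive work is elsewhere: proving that $f$ is well-defined (independent of the representation $g=a\ga b$) and a homomorphism. This is where the threshold $3n/4$, rather than $2n/3$, is genuinely needed, via a counting argument guaranteeing enough $K\times K$ representations of each element to force consistency on the $\gb$-side; compare the proof of \lref{Lm:rs0}, which replays the well-definedness step under the auxiliary hypothesis $r+s=0$, and the example immediately after \pref{Pr:3n4} showing $3n/4$ is sharp. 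Your sketch never attempts to build $f$, and the closure claim it pursues is a dead end.
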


The following example shows that \pref{Pr:3n4} is best possible. Let
$\ga,\gb$ be defined as follows, where differences are shaded.
\[
\begin{array}{c|cccccccc}
\ga&1&2&3&4&5&6&7&8\\
\hline
1&1&2&3&4&5&6&7&8\\
2&2&1&4&3&6&5&8&7\\
3&3&4&\mk1&\mk2&\mk8&\mk7&6&5\\
4&4&3&\mk2&\mk1&\mk7&\mk8&5&6\\
5&5&6&\mk8&\mk7&3&4&2&1\\
6&6&5&\mk7&\mk8&4&3&1&2\\
7&7&8&6&5&2&1&\mk3&\mk4\\
8&8&7&5&6&1&2&\mk4&\mk3\\
\end{array}
\qquad
\begin{array}{c|cccccccc}
\gb&1&2&3&4&5&6&7&8\\
\hline
1&1&2&3&4&5&6&7&8\\
2&2&1&4&3&6&5&8&7\\
3&3&4&\mk2&\mk1&\mk7&\mk8&6&5\\
4&4&3&\mk1&\mk2&\mk8&\mk7&5&6\\
5&5&6&\mk7&\mk8&3&4&2&1\\
6&6&5&\mk8&\mk7&4&3&1&2\\
7&7&8&6&5&2&1&\mk4&\mk3\\
8&8&7&5&6&1&2&\mk3&\mk4\\
\end{array}
\]
In this example, $k=6=3n/4$, but the groups are not isomorphic;
$G(\ga)\cong C_4\times C_2$ and $G(\gb)\cong C_8$.

\begin{proposition}\label{Pr:ManyFixedPoints}
Assume that $n\ge 12$, and let $f:G(\ga)\to G(\gb)$ be a non-identity isomorphism with more than $2n/3$ fixed points. Then $\dist(\ga,\gb)\ge \cst_0(\ga)$.
\end{proposition}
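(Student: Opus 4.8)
The plan is to split according to the number $m=n-|S|$ of points moved by $f$, where $S=\mathrm{Fix}(f)$ is the set of fixed points. Since $f$ is a bijection with $f\neq\mathrm{id}$ it cannot move exactly one point, so $m\geq 2$; and the hypothesis that $f$ has more than $2n/3$ fixed points is exactly $m<n/3$, i.e.\ $n\geq 3m+1$. If $m=2$ then $f$ is the transposition of the two points it moves, so $G(\gb)=G(\ga_f)$ is isomorphic to $G(\ga)$ via a transposition, and \tref{Th:DrEJC} (applicable since $n\geq 12\geq 5$) yields $\dist(\ga,\gb)\geq\cst_0(\ga)$ at once. The substance is therefore the case $m\geq 3$, which I would handle by exhibiting enough cells of $\diff(\ga,\gb)$ by hand.

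Assume $m\geq 3$. Note that $f(S)=S$, so $f^{\pm1}$ fix $S$ setwise and permute $P:=G\setminus S$, and $|S|=n-m$. The argument uses repeatedly the elementary inequality $|X\cap Y|\geq|X|+|Y|-n$ for subsets of an $n$-element set, applied to translates of $S$. One then exhibits three families of cells of $\diff(\ga,\gb)$. \emph{Family (i):} for each $x\in P$, the cells $(x,b)$ with $b\in S$ and $x\ga b\in S$; via the bijection $b\mapsto x\ga b$ there are $|(x\ga S)\cap S|\geq 2|S|-n$ of them, and each lies in $\diff(\ga,\gb)$ because $x\gb b=f(f^{-1}(x)\ga b)$, and $x\gb b=x\ga b\in S$ would give $f^{-1}(x)\ga b=f^{-1}(x\ga b)=x\ga b$, hence $f^{-1}(x)=x$, contradicting $x\in P$. \emph{Family (ii):} for each $b\in P$, the cells $(a,b)$ with $a\in S$ and $a\ga b\in S$; symmetrically there are $|(S\ga b)\cap S|\geq 2|S|-n$ of them, each in $\diff(\ga,\gb)$ since $a\gb b=f(a\ga f^{-1}(b))$ and an equality would force $f^{-1}(b)=b$. \emph{Family (iii):} the cells $(a,b)\in S\times S$, for which $a\gb b=f(a\ga b)$, so that $(a,b)\in\diff(\ga,\gb)$ iff $a\ga b\in P$; grouping by the value $z=a\ga b\in P$, the fibre over $z$ has size $|\{a\in S:a^\ga\ga z\in S\}|=|X_z\cap S|$ where $X_z=\{a^\ga\ga z:a\in S\}$ is the image of $S$ under a bijection of $G$, hence $\geq 2|S|-n$, so the family has $\geq m(2|S|-n)$ cells. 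The three families are pairwise disjoint because their cells have, respectively, first coordinate in $P$; first in $S$ and second in $P$; and both in $S$. Therefore $\dist(\ga,\gb)\geq 3m(2|S|-n)=3m(n-2m)$.

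It remains to check $3m(n-2m)\geq\cst_0(\ga)$. Since $\cst_0(\ga)\leq 6n-18$ in every case of \eqref{Eq:Delta0}, it suffices to show $3m(n-2m)\geq 6n-18$ for integers $3\leq m<n/3$ with $n\geq 12$. This rearranges to $n(m-2)\geq 2(m^2-3)$, i.e.\ $n\geq 2m+4+2/(m-2)$. For $m=3$ this reads $n\geq 12$, which is the hypothesis; for $m\geq 4$ it follows from $n\geq 3m+1$, since $(3m+1)(m-2)-2(m^2-3)=(m-1)(m-4)\geq 0$. Hence $\dist(\ga,\gb)\geq 3m(n-2m)\geq 6n-18\geq\cst_0(\ga)$, completing the case $m\geq 3$ and the proof.

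The only step needing care is the verification that the cells of families (i) and (ii) genuinely lie in $\diff(\ga,\gb)$ — the short computation with $\gb=\ga_f$ above, whose point is that a product staying inside $S$ cannot be disturbed by $f$ unless $f$ moves one of its two factors. I do not expect a real obstacle; the things to watch are to keep the three families honestly disjoint so that their sizes add, and to remember that for $m=2$ this counting by itself gives only $3\cdot 2(n-4)=6n-24$, which can be smaller than $\cst_0(\ga)$ when $n$ is odd or $G(\ga)\cong D(O)$ — which is exactly why that case is extracted and handled separately via \tref{Th:DrEJC}.
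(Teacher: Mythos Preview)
Your proof is correct. The paper itself does not supply a proof of this proposition: it is one of the results explicitly attributed to Dr\'apal's earlier papers \cite{DrEJC} and \cite{DrSzeged}, with the remark that longer proofs are omitted. So there is no in-paper argument to compare against.

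Your approach is the natural direct one: with $S=\mathrm{Fix}(f)$ and $P=G\setminus S$, you correctly partition the ``forced'' differences into three disjoint families according to which of the row, column, and $\ga$-product lies in $P$ (exactly one does in each family), and use the translate-intersection bound $|X\cap Y|\ge|X|+|Y|-n$ to get at least $n-2m$ cells per element of $P$ in each family, hence $\dist(\ga,\gb)\ge 3m(n-2m)$. The verification that families (i) and (ii) consist of genuine differences is exactly right: if the product lands in $S$ it is fixed by $f^{-1}$, forcing the moved factor to be fixed as well. The arithmetic $3m(n-2m)\ge 6n-18$ for $m\ge 3$, $n\ge\max\{12,3m+1\}$ is clean, and you correctly observe that $m=2$ must be handled separately via \tref{Th:DrEJC} because $6n-24$ can fall short of $\cst_0(\ga)$ when $n$ is odd or $G(\ga)\cong D(O)$. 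There is no circularity in invoking \tref{Th:DrEJC} here, since the transposition bound is established independently in \cite{DrEJC}.
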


\begin{corollary}\label{Cr:BigK}
Assume that $n\ge 12$. If $k(\ga,\gb)>3n/4$ then $G(\ga)\cong G(\gb)$ and $\dist(\ga,\gb)\ge\cst_0(\ga)$.
\end{corollary}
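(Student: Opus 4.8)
The plan is to obtain \cref{Cr:BigK} as an immediate consequence of \pref{Pr:3n4} and \pref{Pr:ManyFixedPoints}, with only a trivial arithmetic step and one degenerate-case observation in between. First I would apply \pref{Pr:3n4}: since $k(\ga,\gb)>3n/4$, there is an isomorphism $f:G(\ga)\to G(\gb)$ that fixes every element of $K(\ga,\gb)$. The mere existence of such an $f$ already gives $G(\ga)\cong G(\gb)$, which is the first half of the statement.

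For the second half, note that $3n/4>2n/3$, so the number of fixed points of $f$ is at least $|K(\ga,\gb)|=k(\ga,\gb)>2n/3$. As elsewhere in the paper, we may assume $G(\ga)\ne G(\gb)$ (otherwise $\dist(\ga,\gb)=0$ and there is nothing of interest to prove). Then $f$ cannot be the identity map: if $f=\mathrm{id}$ then $a\gb b = f\bigl(f^{-1}(a)\ga f^{-1}(b)\bigr)=a\ga b$ for all $a,b\in G$, forcing $\gb=\ga$. Hence $f$ is a non-identity isomorphism $G(\ga)\to G(\gb)$ with more than $2n/3$ fixed points, and since $n\ge 12$, \pref{Pr:ManyFixedPoints} yields $\dist(\ga,\gb)\ge\cst_0(\ga)$, as required.

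In short, at the level of the corollary there is essentially no obstacle; the real content is packaged inside the two cited propositions. The ``rigidity'' statement \pref{Pr:3n4} — that an agreement set accounting for more than three quarters of the rows forces a genuine isomorphism fixing all of $K$ — is the structural heart, and \pref{Pr:ManyFixedPoints} supplies the quantitative lower bound once many fixed points are known. If anything needs care here it is only the bookkeeping convention $G(\ga)\ne G(\gb)$ and the elementary inequality $3n/4>2n/3$, which guarantees that the hypothesis of \pref{Pr:ManyFixedPoints} on the fixed-point count is met whenever the hypothesis $k(\ga,\gb)>3n/4$ of \cref{Cr:BigK} holds.
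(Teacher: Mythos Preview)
Your proof is correct and is exactly the argument the paper intends: the corollary is stated without proof because it is an immediate consequence of \pref{Pr:3n4} and \pref{Pr:ManyFixedPoints}, via the trivial inequality $3n/4>2n/3$ and the observation that the isomorphism cannot be the identity when $G(\ga)\ne G(\gb)$.
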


In our search for closest groups $G(\gb)$ to $G(\ga)$, we can therefore assume that $k\le3n/4$ when $n\ge 12$.

Denote by $L_a(\ga)$ the left translation by $a$ in $G(\ga)$, that is, $L_a(\ga)(b) = a\ga b$. Let $\beta_a(\ga,\gb) =  (L_a(\ga))^{-1}L_a(\gb)$. Then $\beta_a(\ga,\gb)(b) = b$ if and only if $a\ga b=a\gb b$, and thus $\dist_a(\ga,\gb)$ is the number of points  moved by $\beta_a(\ga,\gb)$.

\begin{lemma}\label{Lm:MinimalM}
Assume that $\dist_a = \dist_a(\ga,\gb)>0$. Then $\dist_a \ge 2$. If $\beta_a(\ga,\gb)$ is an even permutation then $\dist_a\ge 3$. In particular, if $n$ is odd then $\dist_a\ge 3$.
\end{lemma}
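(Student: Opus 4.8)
The plan is to work with the permutation $\beta_a=\beta_a(\ga,\gb)=(L_a(\ga))^{-1}L_a(\gb)$, recalling from the paragraph preceding the statement that $\dist_a$ equals the number of points moved by $\beta_a$; the whole argument is a parity count on $\beta_a$.

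First I would dispose of $\dist_a\ge 2$: a permutation cannot move exactly one point, since if $\beta_a(x)\ne x$ while every other element is fixed, nothing maps to $x$, contradicting bijectivity. Hence $\dist_a\in\{0\}\cup\{2,3,\dots\}$, and the hypothesis $\dist_a>0$ forces $\dist_a\ge 2$. Next, suppose $\beta_a$ is an even permutation. If we had $\dist_a=2$, then $\beta_a$ would move exactly two points and therefore be a transposition, which is odd --- a contradiction. Combined with the previous sentence, $\dist_a\ge 3$.

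It then remains to show that when $n$ is odd the permutation $\beta_a$ is automatically even, so that the preceding paragraph applies. Since $\mathrm{sgn}$ is a homomorphism and $\mathrm{sgn}(\sigma^{-1})=\mathrm{sgn}(\sigma)$, it suffices to prove that each left translation $L_a(\ga)$ (and likewise $L_a(\gb)$) is an even permutation of $G$. For this I would use the standard cycle structure of a left translation: $L_a(\ga)$ partitions $G$ into the right cosets of the cyclic subgroup $\langle a\rangle$ of $G(\ga)$, and on each such coset it acts as a single cycle of length $|\langle a\rangle|$; thus $L_a(\ga)$ is a product of $n/|\langle a\rangle|$ cycles of length $|\langle a\rangle|$ and has sign $\bigl((-1)^{|\langle a\rangle|-1}\bigr)^{n/|\langle a\rangle|}$. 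When $n$ is odd, $|\langle a\rangle|$ divides $n$ and is therefore odd, so $|\langle a\rangle|-1$ is even and the sign is $+1$. Hence $L_a(\ga)$ and $L_a(\gb)$ are even, $\beta_a$ is even, and $\dist_a\ge 3$.

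The argument is entirely elementary; the only step that requires a moment's care is the cycle-structure/sign computation for a left translation, and that is precisely where the hypothesis that $n$ is odd enters.
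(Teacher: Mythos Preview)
Your proof is correct and follows essentially the same approach as the paper's: both argue that $\beta_a$ cannot move exactly one point, that an even $\beta_a$ cannot be a transposition, and that for odd $n$ the left translations decompose into cycles of odd length and are therefore even. You simply spell out the sign computation for $L_a(\ga)$ a bit more explicitly than the paper does.
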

\begin{proof}
The case $\dist_a=1$ is impossible since $\beta_a$ cannot move precisely $1$ point. When $\beta_a$ is even, it is not a transposition, and hence it moves at least $3$ points. When $n$ is odd, the left translations $L_a(\ga)$, $L_a(\gb)$ are products of cycles of odd length, hence $\beta_a$ is an even permutation.
\end{proof}

Finally, we investigate $\dist_a(\ga,\gb)$ depending on whether $a$ has the same order in $G(\ga)$ and $G(\gb)$. Denote by $|a|_\ga$ the order of $a$ in $G(\ga)$. If $|a|_\ga = |a|_\gb$, we say that $a$ is \emph{order matched}, otherwise it is \emph{order mismatched}.

\begin{lemma}\label{Lm:OrderMismatched}
Assume that $\sigma=|a|_\ga > |a|_\gb = \tau$. Then $\dist_a(\ga,\gb)\ge (n/\sigma)\lceil \sigma/\tau\rceil \ge n/\tau$.
\end{lemma}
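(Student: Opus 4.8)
The plan is to compare the cycle structures of the left translations $L_a(\ga)$ and $L_a(\gb)$ on the set $G$, and to use the fact that $\beta_a = (L_a(\ga))^{-1}L_a(\gb)$ is the identity precisely on the set of $b$ where the two group operations agree. Since $a$ has order $\sigma$ in $G(\ga)$, the left translation $L_a(\ga)$ is a product of $n/\sigma$ cycles, each of length exactly $\sigma$; similarly $L_a(\gb)$ is a product of $n/\tau$ cycles, each of length $\tau$. The orbits of $L_a(\ga)$ are the right cosets $\langle a\rangle_\ga\ga b$, and the orbits of $L_a(\gb)$ are the right cosets $\langle a\rangle_\gb\gb b$. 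I want to show that on each $L_a(\ga)$-orbit $C$ (a set of size $\sigma$), the permutations $L_a(\ga)$ and $L_a(\gb)$ cannot agree on too many points, so that $\beta_a$ must move at least $\lceil\sigma/\tau\rceil$ of the $\sigma$ points in $C$; summing over all $n/\sigma$ such orbits then gives $\dist_a\ge(n/\sigma)\lceil\sigma/\tau\rceil$, and the final bound $\lceil\sigma/\tau\rceil\ge\sigma/\tau$ yields $(n/\sigma)(\sigma/\tau)=n/\tau$.

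The key step is the orbit-wise count. Fix an $L_a(\ga)$-orbit $C=\{b, a\ga b, a^2\ga b,\dots\}$ of size $\sigma$. Suppose $x\in C$ is a point where $\beta_a$ fixes $x$, i.e.\ $a\ga x = a\gb x$. Then $a\gb x = a\ga x$ lies in $C$ as well (it is the $L_a(\ga)$-successor of $x$). So whenever $\beta_a$ fixes a point of $C$, the $L_a(\gb)$-image of that point also lies in $C$. Now consider the $L_a(\gb)$-orbit through $x$: it has length $\tau$, and I claim that it can meet $C$ in a run of consecutive $L_a(\ga)$-steps only until the first point where $\beta_a$ moves. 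More carefully: if $\beta_a$ fixes $x, a\ga x, \ldots, a^{j-1}\ga x$ (a block of $j$ consecutive points of $C$), then $a^i\gb x = a^i\ga x$ for $0\le i\le j$, which forces $a^{j}\ga x$ to lie in the $L_a(\gb)$-orbit of $x$; since that orbit has only $\tau$ elements, we must have $j\le\tau$ — otherwise $a^{j}\ga x = a^{\tau}\gb x \gb\cdots$, wait, more precisely once $j=\tau$ we'd get $a^{\tau}\ga x = a^{\tau}\gb x = x$ (returning to start), contradicting that $L_a(\ga)$ has order $\sigma>\tau$ on $C$, so no proper return is possible before $\sigma$ steps. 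Hence every maximal block of consecutive fixed points of $\beta_a$ within the cyclic orbit $C$ has length at most $\tau$; the $\sigma$ points of $C$ are thus partitioned (cyclically) into maximal fixed-blocks of length $\le\tau$ separated by at least one moved point each, giving at least $\lceil\sigma/\tau\rceil$ moved points in $C$.

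I expect the block-length bound to be the main obstacle: one has to be careful about the cyclic (wrap-around) structure of $C$ and about the boundary case where $\beta_a$ fixes all of $C$ — but that would force $\tau\ge\sigma$ by the argument above, contradicting $\sigma>\tau$, so at least one point of $C$ is moved and the cyclic counting argument applies cleanly. Once the orbit-wise bound $\ge\lceil\sigma/\tau\rceil$ is established, summing over the $n/\sigma$ orbits of $L_a(\ga)$ and using $\lceil\sigma/\tau\rceil\ge\sigma/\tau$ completes the proof. A cleaner packaging of the block argument may be to observe directly that the set of fixed points of $\beta_a$ inside $C$, viewed inside the cyclic group $\mathbb Z_\sigma$ (identifying $a^i\ga b$ with $i$), together with the structure of $L_a(\gb)$-orbits, forces any two fixed points at cyclic distance $\le\tau$... and then a pigeonhole on $\sigma$ points in blocks of size $\le\tau$ gives the count; I would present whichever of these two phrasings turns out shorter.
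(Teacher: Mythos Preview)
Your approach is essentially the same as the paper's: both compare the $L_a(\ga)$-cycles (each of length $\sigma$) with the $L_a(\gb)$-cycles (each of length $\tau$) and show that within any stretch of $\tau$ consecutive points of an $L_a(\ga)$-cycle there must be a point moved by $\beta_a$, then sum over the $n/\sigma$ cycles. One small slip to fix in your write-up: your contradiction at $j=\tau$ actually shows that each maximal block of consecutive fixed points has length at most $\tau-1$ (not $\tau$), and it is precisely this sharper bound that makes the cyclic pigeonhole yield $d\tau\ge\sigma$, hence $d\ge\lceil\sigma/\tau\rceil$ moved points per orbit.
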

\begin{proof}
The left translation $L_a(\ga)$ is a product of $n/\sigma$ disjoint cycles of length $\sigma$, and $L_a(\gb)$ is a product of $n/\tau$ disjoint cycles of length $\tau<\sigma$. Consider a cycle $(b_0,\dots,b_{\sigma-1})$ of $L_a(\ga)$. By definition then, $a\ga b_i = b_{i+1\mod \sigma}$. Let us focus on $b_0$. Without loss of generality, there is a cycle $(c_0,\dots,c_{\tau-1})$ of $L_a(\gb)$ such that $b_0=c_0$. Let $i$ be the least integer with $1\le i\le \tau$ such that $b_i\ne c_{i\mod \tau}$. (Such an $i$ exists, since $c_{\tau\mod \tau} = c_0=b_0\ne b_\tau$.) Then $a\ga c_{i-1} = a\ga b_{i-1} = b_i\ne c_{i\mod \tau} = a\gb c_{i-1} = a\gb b_{i-1}$.

Hence, corresponding to the segment $b_0$, $\dots$, $b_\tau$, we found a
difference $a\ga b_j\ne a\gb b_j$ with $0\le j\le \tau-1$. Repeating this
argument shows that there must be $\lceil \sigma/\tau\rceil$ differences within
each of the $n/\sigma$ cycles of $L_a(\ga)$.
\end{proof}

By \tref{Th:DrEJC}, $\cst_{\cong}(\ga)<\cst_{\not\cong}(\ga)$ when $n\ge 51$. We can reach the same conclusion for some smaller orders $n$, too:

\begin{lemma}\label{Lm:2p}
Let $n=2p$ for a prime $p\ge 11$. Let $G(\ga)$ be a group of order $n$. Then $\cst_{\cong}(\ga)<\cst_{\not\cong}(\ga)$.
\end{lemma}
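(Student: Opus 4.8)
The plan is to reduce to a single lower bound on the distance between the two isomorphism types of order $2p$ and then compare that bound with the transposition threshold $6n-20$. The only groups of order $2p$ are $C_{2p}$ and the dihedral group $D_{2p}=D(C_p)$, and by \pref{Pr:IsoDist} both $\cst_{\cong}$ and $\cst_{\not\cong}$ depend only on the isomorphism type; hence $\cst_{\not\cong}(\ga)=\dist([C_{2p}],[D_{2p}])$ for every group $G(\ga)$ of order $2p$. On the other hand \tref{Th:DrEJC} gives $\cst_{\cong}(\ga)\le\cst_0(\ga)$, and by \eqref{Eq:Delta0} we have $\cst_0(\ga)\le 6n-20=12p-20$ (with equality exactly when $G(\ga)\cong D_{2p}$). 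So it suffices to show $\dist([C_{2p}],[D_{2p}])>12p-20$, and I would in fact prove the stronger estimate $\dist([C_{2p}],[D_{2p}])\ge p^2-p+3$; since the larger root of $x^2-13x+23$ is less than $11$, this beats $12p-20$ for every integer $p\ge 11$ (the margin at $p=11$ being a single unit).

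To get the estimate, fix operations $\ga$, $\gb$ on $G$ with $G(\ga)\cong C_{2p}$, $G(\gb)\cong D_{2p}$ and $\dist(\ga,\gb)=\dist([C_{2p}],[D_{2p}])$; by \lref{Lm:Neutral} we may assume $1(\ga)=1(\gb)$ (the transposition alternative in that lemma would force $G(\ga)\cong G(\gb)$). I would partition $G\setminus\{1\}$ by $\ga$-order as $\{t\}\cup A\cup C$, where $t$ is the involution of $G(\ga)$, $A$ is the set of $p-1$ elements of $\ga$-order $p$, and $C$ is the set of $p-1$ elements of $\ga$-order $2p$; and by $\gb$-order as $B_2\cup B_p$, where $B_2$ is the set of $p$ reflections and $B_p$ the set of $p-1$ rotations. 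Since $\dist_a$ is symmetric in the two operations, \lref{Lm:OrderMismatched} gives $\dist_a\ge(n/p)\lceil p/2\rceil=p+1$ whenever $a$ has $\ga$- and $\gb$-orders $2$ and $p$ in some order, $\dist_a\ge p$ when they are $2p$ and $2$, and $\dist_a\ge 2$ when they are $2p$ and $p$. As $G(\ga)$ has only the two elements $1$ and $t$ of $\ga$-order below $p$, at least $p-1$ of the $p$ reflections have $\ga$-order in $\{p,2p\}$ and hence contribute at least $p$ each, so already $\dist(\ga,\gb)\ge p(p-1)$; but this is too weak for $p=11$, so I would refine as follows.

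The refinement is a short case analysis. If $t\notin B_2$, then all $p$ reflections contribute $\ge p$ and $t$ (now a rotation) contributes $\ge p+1$, giving $\dist(\ga,\gb)\ge p^2+p+1$. If $t\in B_2$, consider the $p-1$ elements of $B_2\setminus\{t\}$, each of $\ga$-order $p$ or $2p$: if at least one of them has $\ga$-order $p$, it contributes $p+1$ rather than $p$ and the elements of $\ga$-order $2p$ it displaces into $B_p$ contribute $\ge 2$ each, and a direct tally yields $\dist(\ga,\gb)\ge p^2-p+3$. The only remaining configuration is $B_2=\{t\}\cup C$, which forces $B_p=A$, so the Sylow $p$-subgroup $\{1\}\cup A$ of $G(\ga)$ equals the Sylow $p$-subgroup $\{1\}\cup B_p$ of $G(\gb)$ as a set $P$ with $|P|=n/2$. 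If $\dist_a=0$ for all $a\in P$ then $P\subseteq H(\ga,\gb)$, so $h=n/2$ and \lref{Lm:HalfH} gives $\dist(\ga,\gb)\ge n^2/4=p^2$. Otherwise, by \lref{Lm:HSubgroup}, $H(\ga,\gb)\cap P$ is a subgroup of $G(\ga)$ properly inside the group $P$ of prime order, hence trivial, so $\dist_a>0$ for every $a\in A$; since $|a|_\ga=|a|_\gb=p$, both $L_a(\ga)$ and $L_a(\gb)$ are products of two $p$-cycles and so even permutations, whence $\beta_a(\ga,\gb)$ is even and \lref{Lm:MinimalM} gives $\dist_a\ge 3$; together with the contribution $\ge p$ from each of the $p-1$ elements of $C\subseteq B_2$ this gives $\dist(\ga,\gb)\ge p(p-1)+3(p-1)>p^2$. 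Thus $\dist(\ga,\gb)\ge p^2-p+3$ in every case.

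The hard part is that the target inequality is razor-thin: at $p=11$ we have $p^2-p+3=113$ against $12p-20=112$, so the bound $\dist\ge p(p-1)$ coming from the order profile alone is not enough, and each of the three refinements above is genuinely needed --- the extra mismatched reflection when the involution of $G(\ga)$ is a rotation, the ``$+3$'' when $B_2\setminus\{t\}$ meets $A$, and the dichotomy between \lref{Lm:HalfH} and \lref{Lm:MinimalM} in the common-Sylow case. Verifying that all of these near-extremal configurations clear the threshold at once, rather than just the generic case, is where the work concentrates; the rest is bookkeeping together with the structural lemmas of this section and \tref{Th:DrEJC}.
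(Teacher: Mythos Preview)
Your proof is correct and rests on the same ingredients as the paper's --- the order-mismatch bound of \lref{Lm:OrderMismatched} applied to the reflections of $D_{2p}$, together with the $H$-subgroup/\lref{Lm:HalfH} dichotomy to squeeze out the extra units needed at $p=11$.

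The organization differs, though. You perform a full case analysis on where the involution $t$ of $C_{2p}$ lands and how $B_2\setminus\{t\}$ meets $A$ versus $C$, extracting the uniform bound $\dist\ge p^2-p+3$ valid for all $p$. The paper is more economical: it simply records $\dist\ge p(p-1)$ from the $p-1$ order-mismatched reflections, which already disposes of $p\ge 13$, and only for $p=11$ does it refine --- and then by looking directly at the rotation subgroup $C_p\le D_{2p}$ rather than at $t$, splitting on whether $C_p\subseteq H$ (invoking \lref{Lm:HalfH}) or not (getting $\dist_a\ge 2$ on all $p-1$ nonidentity rotations via \lref{Lm:HSubgroup}). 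Your route buys a sharper explicit bound; the paper's buys brevity by avoiding the three-way case split and being content with $\dist_a\ge 2$ rather than your parity-based $\dist_a\ge 3$ in the matched-Sylow case.
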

\begin{proof}
Up to isomorphism, there are only two groups of order $2p$, the cyclic group $C_{2p}=G(\ga)$ and the dihedral group $D_{2p}=G(\gb)$. There is a unique involution in $C_{2p}$ and there are $p$ involutions in $D_{2p}$. Hence at least $p-1$ involutions are order mismatched. By \lref{Lm:OrderMismatched}, $d_a(\ga,\gb)\ge 2p/2=p$ for every order mismatched involution $a$. We therefore have $\dist(\ga,\gb)\ge (p-1)p$. On the other hand, $\cst(C_{2p})\le 12p-24$ and $\cst(D_{2p})\le 12p-20$ by \tref{Th:DrEJC}. The inequality $(p-1)p>12p-20$ holds for every $p\ge 13$.

It remains to discuss the case $p=11$. If at least one element $a$ in the cyclic subgroup $C_p$ of $D_{2p}$ satisfies $\dist_a>0$ (hence $\dist_a\ge 2$), then the same inequality holds for every nonidentity element of $C_p$, by \lref{Lm:HSubgroup}, and thus $\dist(\ga,\gb)\ge (p-1)p+2(p-1)>12p-20$. Otherwise, $C_p=H$, and $\dist(\ga,\gb)\ge 2p^2>12p-20$ by \lref{Lm:HalfH}.
\end{proof}

\begin{lemma}\label{Lm:OrderMatched} If $\dist_a>0$ and $a$ is order matched then $\dist_a\ge 3$.
\end{lemma}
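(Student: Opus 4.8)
The plan is to refine the parity argument of \lref{Lm:MinimalM}. We already know $\dist_a\ge 2$, and $\dist_a=2$ can only happen when $\beta_a=\beta_a(\ga,\gb)$ is a transposition; so it suffices to rule out that $\beta_a$ is a transposition when $a$ is order matched. Suppose for contradiction that $\beta_a=(b,c)$ for some $b\ne c$ in $G$, i.e.\ $a\ga x=a\gb x$ for every $x\notin\{b,c\}$, while $a\ga b=a\gb c$ and $a\ga c=a\gb b$ (the two values must be swapped). Equivalently, writing $L=L_a(\ga)$ and $L'=L_a(\gb)$, we have $L'=L\circ(b,c)$, so $L'$ is obtained from the single left translation $L$ by post-composing with a transposition of two points. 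The strategy is to compare the cycle structures of $L$ and $L'$: post-multiplying a permutation by a transposition either splits one cycle into two or merges two cycles into one, depending on whether $b,c$ lie in the same $L$-cycle. In the first case $|a|_\ga$ (the length of the $L$-cycle through $1(\ga)$, or rather the lcm of all cycle lengths of $L$, which for a left translation equals the common order) and $|a|_\gb$ differ; I would use the fact that all cycles of a left translation in a group have the same length equal to the order of $a$, so $L$ has cycle type $(\sigma^{n/\sigma})$ with $\sigma=|a|_\ga$ and $L'$ has cycle type $(\tau^{n/\tau})$ with $\tau=|a|_\gb$.

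Here is the combinatorial core. If $b$ and $c$ lie in the same cycle of $L$, that cycle of length $\sigma$ splits in $L'$ into two cycles of lengths $\sigma_1+\sigma_2=\sigma$ with $\sigma_1,\sigma_2\ge 1$, while all other $L$-cycles (of length $\sigma$) are untouched. For $L'$ to be a left translation, \emph{all} its cycles must have the same length $\tau$; hence $\sigma_1=\sigma_2=\tau$ and every other $\sigma$-cycle must also equal $\tau$, forcing $\sigma=\tau$ and $\sigma=2\tau$ simultaneously — impossible (note $n/\sigma\ge 1$; if there were no ``other'' cycle then $n=\sigma$ and $L'$ has a cycle of length $\sigma_1<\sigma=\tau$, contradiction, and if $\sigma_1=\sigma_2$ we still need $\tau=\sigma_1<\sigma$). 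Symmetrically, if $b$ and $c$ lie in different $L$-cycles, these two cycles of length $\sigma$ merge in $L'$ into one cycle of length $2\sigma$, while the remaining $n/\sigma-2$ cycles stay at length $\sigma$; again uniformity of cycle lengths of $L'$ forces $2\sigma=\tau$ and (if any cycle remains) $\sigma=\tau$, impossible, while if no cycle remains then $n=2\sigma$, $L'$ is a single $2\sigma$-cycle, so $\tau=2\sigma=n$, meaning $G(\gb)$ is cyclic generated by $a$ but $G(\ga)$ has $a$ of order $\sigma=n/2$ — but that is precisely the order-mismatched situation, excluded by hypothesis. In every case we obtain a contradiction, so $\beta_a$ is not a transposition, whence $\dist_a\ge 3$.

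The only subtlety, and the step I expect to need the most care, is justifying that a left translation in a finite group really has \emph{all} cycles of the same length (namely the order of the translating element): this is standard — the left translation by $a$ generates a cyclic group acting freely on $G$, so every orbit has size $|a|$ — but it must be stated cleanly, and one must be careful that ``$|a|_\ga=|a|_\gb$'' is genuinely the hypothesis being used to kill the last sub-case (the merge-into-one-big-cycle scenario with $n=2\sigma$), which is otherwise exactly how $C_n$ sits at distance $m=2$ from a group with an element of order $n/2$, the configuration studied in Section \ref{Sc:m2}. Apart from that, the argument is an elementary case analysis on how a transposition alters cycle structure, so I would keep it brief.
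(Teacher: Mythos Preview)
Your argument is correct, but the paper's proof is a two-line parity argument that avoids the case analysis entirely. Since $a$ is order matched, $L_a(\ga)$ and $L_a(\gb)$ are both products of $n/\sigma$ disjoint $\sigma$-cycles (where $\sigma=|a|_\ga=|a|_\gb$), hence have the \emph{same} parity; therefore $\beta_a=L_a(\ga)^{-1}L_a(\gb)$ is an even permutation, and \lref{Lm:MinimalM} finishes immediately.

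Your explicit split/merge analysis is essentially a hands-on verification of this parity observation in the special case where $\beta_a$ is a single transposition: a transposition changes the number of cycles by exactly one, which is incompatible with both $L$ and $L'$ having the uniform cycle type $(\sigma^{n/\sigma})$. What your approach buys is that it makes visible \emph{where} the order-matched hypothesis bites --- precisely the boundary sub-cases $n=\sigma$ (split) and $n=2\sigma$ (merge) that feed into \pref{Pr:m2} and the $m=2$ analysis of \sref{Ss:m2}. What the paper's approach buys is brevity and a clean reduction to \lref{Lm:MinimalM}; it also shows, with no extra work, that $\beta_a$ cannot be \emph{any} odd permutation when $a$ is order matched, not just a transposition. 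Your write-up would benefit from replacing the case analysis with the one-sentence parity step.
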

\begin{proof}
The two left translations $L_a(\ga)$ and $L_a(\gb)$ have the same cycle structure, thus $\beta_a(\ga,\gb)$ is an even permutation, and we are done by \lref{Lm:MinimalM}.
\end{proof}

We can now narrow down possible isomorphism types of $G(\ga)$ and
$G(\gb)$ when $m=2$.

\begin{proposition}\label{Pr:m2}
Assume that $\dist_a(\ga,\gb)=2$. Then, without loss of generality, $|a|_\ga = n$ and $|a|_\gb = n/2$.
\end{proposition}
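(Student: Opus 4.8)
The plan is to analyze $\beta_a = \beta_a(\ga,\gb) = (L_a(\ga))^{-1}L_a(\gb)$, which by \lref{Lm:MinimalM} moves exactly $2$ points, hence is a transposition. First I would observe that by \lref{Lm:OrderMatched}, $a$ must be order mismatched: if $a$ were order matched then $\dist_a \ge 3$, contradiction. So $|a|_\ga \ne |a|_\gb$. Without loss of generality (swapping the roles of $\ga$ and $\gb$ if necessary, which does not change $\dist_a$) I may assume $\sigma = |a|_\ga > |a|_\gb = \tau$. Now \lref{Lm:OrderMismatched} gives $\dist_a \ge (n/\sigma)\lceil\sigma/\tau\rceil \ge n/\tau$, and combining with $\dist_a = 2$ forces $n/\tau \le 2$, i.e. $\tau \ge n/2$; since $\tau$ divides $n$ and $\tau < \sigma \le n$, the only possibility is $\tau = n/2$ (and then $\sigma = n$, since $\sigma$ is a divisor of $n$ strictly exceeding $n/2$). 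This already pins down $|a|_\ga = n$, $|a|_\gb = n/2$, which is exactly the claim.

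The one gap to close carefully is the ``without loss of generality'' step: I need to be sure that the conclusion is symmetric in the right way and that $n$ is even so that $\tau = n/2$ makes sense as a divisor. Evenness follows because $\tau = n/2$ must be an integer once we've shown $n/\tau \le 2$ with $\tau$ a proper divisor — or, more directly, if $n$ were odd then \lref{Lm:MinimalM} already gives $\dist_a \ge 3$, contradicting $\dist_a = 2$, so $n$ is even from the start. For the symmetry: $\dist_a(\ga,\gb) = \dist_a(\gb,\ga)$ since $\diff_a$ is defined by the symmetric condition $a\ga b \ne a\gb b$, so applying the argument above to whichever of $(\ga,\gb)$ or $(\gb,\ga)$ has the larger order at $a$ yields, after possibly renaming, $|a|_\ga = n$ and $|a|_\gb = n/2$.

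I do not expect a genuine obstacle here; the statement is essentially a bookkeeping consequence of \lref{Lm:OrderMatched}, \lref{Lm:MinimalM}, and \lref{Lm:OrderMismatched}. The only place requiring a moment's thought is verifying that $\sigma = n$ rather than merely $\sigma > n/2$ — this uses that $\sigma \mid n$, so the only divisor of $n$ in the interval $(n/2, n]$ is $n$ itself. If one wanted to be even more careful, one could note that \lref{Lm:OrderMismatched} with $\tau = n/2$ gives $\dist_a \ge (n/\sigma)\lceil 2\sigma/n\rceil$, and for this to equal $2$ one needs $\sigma = n$ (giving $(1)(2) = 2$), since any smaller divisor $\sigma$ with $n/2 < \sigma < n$ does not exist anyway. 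Thus the proof is short and the main ``work'' is simply assembling the inequalities in the correct order.
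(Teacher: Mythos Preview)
Your proof is correct and follows essentially the same route as the paper: invoke \lref{Lm:OrderMatched} to force $a$ to be order mismatched, assume $\sigma>\tau$ without loss of generality, and then squeeze the inequality from \lref{Lm:OrderMismatched} against $\dist_a=2$ to force $\sigma=n$, $\tau=n/2$. The only cosmetic difference is that the paper argues $n/\sigma=1$ first (from $(n/\sigma)\lceil\sigma/\tau\rceil\le 2$ with both factors at least $1$ and the second at least $2$), whereas you bound $\tau$ first via $n/\tau\le 2$; both arrive at the same conclusion immediately.
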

\begin{proof}
Since $\dist_a=2$, $a$ must be order mismatched, by
\lref{Lm:OrderMatched}. Let $\sigma = |a|_\ga$ and
$\tau=|a|_\gb$. Without loss of generality, $\sigma>\tau$. Then, by
\lref{Lm:OrderMismatched},
$2=\dist_a\ge(n/\sigma)\lceil\sigma/\tau\rceil$. As $\sigma>\tau$, we
must have $n/\sigma=1$ and $\lceil \sigma/\tau\rceil = 2$, hence
$n=\sigma$, $\lceil n/\tau\rceil =2$, and because $\tau$ divides $n$,
it follows that $\tau=n/2$.
\end{proof}

For a group $G(\ga)$ and integer $\ell\ge 1$, let $o_\ell(\ga)$ be the number of elements of order $\ell$ in $G(\ga)$. Motivated by \pref{Pr:m2}, we let
\begin{displaymath}
    \omega(\ga,\gb) = \min\{o_n(\ga),o_{n/2}(\gb)\} + \min\{o_{n/2}(\ga),o_n(\gb)\}.
\end{displaymath}
%We also let
%\begin{displaymath}
%    \omega(n) = \max\{\omega(\ga,\gb);\;G(\ga),\,G(\gb)\text{ are groups of order $n$}\}.
%\end{displaymath}

Let $\varphi$ denote Euler's totient function.

\begin{lemma}\label{Lm:LimitOnm2}
For groups $G(\ga)$, $G(\gb)$ of even order $n$, there are at most $h(\ga,\gb)+2\varphi(n/2)$ rows $a\in G$ with $\dist_a<3$.
\end{lemma}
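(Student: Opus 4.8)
The plan is to show that the rows $a$ with $\dist_a<3$ split into two kinds: those in $H$ (there are $h(\ga,\gb)$ of them, contributing nothing since $\dist_a=0$ there), and those with $\dist_a=2$ (since $\dist_a=1$ is impossible by \lref{Lm:MinimalM}). So it suffices to bound the number of rows $a$ with $\dist_a(\ga,\gb)=2$ by $2\varphi(n/2)$. By \pref{Pr:m2}, every such $a$ is order mismatched with $\{|a|_\ga,|a|_\gb\}=\{n,n/2\}$; more precisely, either $|a|_\ga=n$ and $|a|_\gb=n/2$, or $|a|_\ga=n/2$ and $|a|_\gb=n$. I would count the two types separately and bound each by $\varphi(n/2)$.

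Consider first the rows $a$ with $\dist_a=2$, $|a|_\ga=n$, $|a|_\gb=n/2$. The key point is that for a fixed $b$ of order $n/2$ in $G(\gb)$, there can be at most one such $a$. Indeed, if $a$, $a'$ both satisfy $\dist_a=\dist_{a'}=2$ with $|a|_\ga=|a'|_\ga=n$ and $a$, $a'$ having the same order-$n/2$ square in $G(\gb)$ — wait, I should phrase this in terms of translations. Here is the cleaner route: if $|a|_\gb=n/2$ then $L_a(\gb)$ is a product of two $n/2$-cycles, while $L_a(\ga)$ is a single $n$-cycle since $|a|_\ga=n$; the analysis in the proof of \lref{Lm:OrderMismatched} with $\sigma=n$, $\tau=n/2$ shows $\dist_a=2$ forces $L_a(\ga)$ and $L_a(\gb)$ to agree except at exactly two places, one in each $n/2$-cycle of $L_a(\gb)$, and these two exceptional transitions must be ``swapped'' so that the two $n/2$-cycles of $L_a(\gb)$ fuse into the single $n$-cycle $L_a(\ga)$ in a tightly constrained way. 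From this one deduces that $a^{\gb}$ (equivalently, the element $a\gb a$, which has order $n/4$ or generates the cyclic group $\langle a\rangle_\gb$) together with the two fixed structures determines $a$; the upshot I expect is that the map sending such an $a$ to a generator of the cyclic group $\langle a\rangle_\gb\cong C_{n/2}$ is injective, and there are exactly $\varphi(n/2)$ possible images. Symmetrically, the rows with $|a|_\ga=n/2$, $|a|_\gb=n$ are bounded by $\varphi(n/2)$ via the roles of $\ga$ and $\gb$ reversed. Adding, plus the $h$ rows of $H$, gives the bound $h+2\varphi(n/2)$.

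The main obstacle is making precise the injectivity claim — that an order-mismatched $a$ with $\dist_a=2$ is determined (within its type) by the cyclic subgroup $\langle a\rangle$ in the group where it has order $n/2$, or equivalently by which generator of a fixed $C_{n/2}$ it is. The combinatorial heart is the observation that when $L_a(\ga)$ is a single $n$-cycle and $L_a(\gb)$ a pair of $n/2$-cycles differing in only two positions, the pair $(L_a(\ga),L_a(\gb))$ is rigidly determined by $a\gb a$ and the common value-structure, so that two distinct rows $a$, $a'$ of the same type with the same square-in-$\gb$ would force $a=a'$. I would verify this by writing out the cycle of $L_a(\ga)$ as $(x, a\gb a\cdot x, (a\gb a)^2\cdot x, \dots)$ interleaved appropriately and checking that the only freedom is the choice of $a$ within the coset structure, which is pinned down by $\langle a\rangle_\gb$. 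Once that rigidity lemma is in hand, the counting is immediate.
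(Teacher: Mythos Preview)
Your opening reduction is correct: rows with $\dist_a<3$ are exactly the $h$ rows of $H$ together with the rows where $\dist_a=2$, and \pref{Pr:m2} forces $\{|a|_\ga,|a|_\gb\}=\{n,n/2\}$ for the latter. The problem is everything after that.

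Your plan is to bound each of the two ``types'' by $\varphi(n/2)$ via an injectivity argument, but the map you describe, sending $a$ to ``a generator of $\langle a\rangle_\gb\cong C_{n/2}$'', is just $a\mapsto a$; its injectivity is tautological and the image lands in the set of \emph{all} elements of order $n/2$ in $G(\gb)$, whose size is $o_{n/2}(\gb)$, not $\varphi(n/2)$. These can differ: in $C_4\times C_2$ there are four elements of order $4$ while $\varphi(4)=2$. Your ``rigidity'' sketch does not address this, and in fact the per-type bound $\varphi(n/2)$ that you are aiming for need not hold (when $n\equiv 0\pmod 4$ one can have up to $2\varphi(n/2)$ elements of a single type meeting the order constraints). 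So the argument, as written, has a genuine gap.

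The paper's proof avoids all of this. It does \emph{not} try to bound each type by $\varphi(n/2)$. Instead it uses the trivial observation that the number of $a$ with $|a|_\ga=n$ and $|a|_\gb=n/2$ is at most $\min\{o_n(\ga),\,o_{n/2}(\gb)\}$, and symmetrically for the other type; the sum is exactly the quantity $\omega(\ga,\gb)$ introduced just before the lemma. A three-line case split then gives $\omega(\ga,\gb)\le 2\varphi(n/2)$: if $G(\ga)$ is not cyclic then $o_n(\ga)=0$ and $\omega\le o_n(\gb)\le\varphi(n)\le 2\varphi(n/2)$; symmetrically if $G(\gb)$ is not cyclic; if both are cyclic then $\omega=2\min\{\varphi(n/2),\varphi(n)\}=2\varphi(n/2)$. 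Note that the extra hypothesis $\dist_a=2$ is not even used at the counting stage---only the order pair matters.
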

\begin{proof}
Consider $a\not\in H$. If $a$ is order matched, then $\dist_a\ge 3$ by
\lref{Lm:OrderMatched}. If $a$ is order mismatched and $\dist_a=2$, we
must have $\{|a|_\ga,|a|_\gb\} = \{n,n/2\}$, by
\lref{Lm:OrderMismatched}. The number of elements $a$ with
$\{|a|_\ga,|a|_\gb\}=\{n,n/2\}$ cannot exceed $\omega(\ga,\gb)$.  Thus
it suffices to show that $\omega(\ga,\gb)\le2\varphi(n/2)$.

Suppose $G(\ga)$ is not cyclic. Then
$\omega(\ga,\gb)=\min\{0,o_{n/2}(\gb)\}+\min\{o_{n/2}(\ga),o_n(\gb)\}
\le o_n(\gb)\le\varphi(n)\le2\varphi(n/2)$. A similar argument works if
$G(\gb)$ is not cyclic, so we may as well assume that both
$G(\ga)$ and $G(\gb)$ are cyclic. In that case
$\omega(\ga,\gb)=2\min\{o_{n/2}(\ga),o_n(\gb)\}
=2\min\{\varphi(n/2),\varphi(n)\}
=2\varphi(n/2)$.
\end{proof}

\section{Inequalities}\label{Sc:Inequalities}

We now start the search for closest multiplication tables of groups.

Let $G(\ga)$, $G(\gb)$ be two groups of order $n$, and let $h=h(\ga,\gb)$, $k=k(\ga,\gb)$, $m=m(\ga,\gb)$. Keeping our goal in mind, we can make the following assumptions on $n$, $h$, $k$ and $m$:
\begin{enumerate}
\item[-] $23\le n\le 50$ (the case $n\ge 51$ is covered by
  \tref{Th:DrEJC}, the case $n\le 22$ will be addressed later),
\item[-] $1\le h<n$ and $h$ divides $n$ (we can assume $1\le h$ by
  \lref{Lm:Neutral}, $h<n$ to avoid $G(\ga)=G(\gb)$, and $h$ divides
  $n$ by \lref{Lm:HSubgroup}),
\item[-] $k\le 3n/4$ and $h$ divides $k$ (by
  \cref{Cr:BigK} and \lref{Lm:HDividesK}),
\item[-] $m\ge 2$ when $n$ is even and $m\ge 3$ when $n$ is odd (by
  \lref{Lm:MinimalM}). By the definition of $k$, we also know $m<n/3$
  if $h<k$, whereas $n/3\le m\le n$ if $h=k$.
\end{enumerate}
We will consider quadruples $(n,h,k,m)$ satisfying the above
conditions. We are interested only in such quadruples for which
$\dist(\ga,\gb)\le\cst(\ga)$ occurs. Since we do not want to assume
(yet) anything about the isomorphism type of $G(\ga)$, we set
\begin{displaymath}
    \cst_0(n) = \left\{\begin{array}{ll}
        6n-18,&\text{ when $n$ is odd},\\
        6n-20,&\text{ when $n\equiv2\mod4$},\\
        6n-24,&\text{ when $n\equiv0\mod4$},
    \end{array}\right.
\end{displaymath}
and we keep only those quadruples for which it is possible that
$\dist(\ga,\gb)\le \cst_0(n)$.  We will eliminate most quadruples by a
series of inequalities.

%Generally speaking, the strength of the inequalities tends to increase
%as we progress from simple to more complicated inequalities, but some
%very simple inequalities cover cases that the more involved
%inequalities cannot cope with.

We start with a fundamental inequality based on both $H$ and
$K$. Every element of $G\setminus K$ satisfies
$\dist_a\ge\lceil n/3\rceil$, and $H\subseteq K$, thus
\begin{equation}\label{Eq:I4}
    \dist(\ga,\gb)\ge (n-k)\lceil n/3\rceil + (k-h)m.
\end{equation}
There are $309$ quadruples $[n,h,k,m]$ that satisfy this constraint.
We will gradually whittle these away until none remain (at the end of Section \ref{Sc:Stubborn}).

Let $a$ be such that $\dist_a=m$. By \lref{Lm:Triple}, there is
$b$ such that $\dist_a+\dist_b+\dist_{a\ga b}\ge n$. Hence
$\dist_b+\dist_{a\ga b}\ge n-m$, and we conclude that there exists $c$
such that $\dist_c\ge \lceil (n-m)/2\rceil$. Then by
\lref{Lm:ConstantOnCosets}, there are (at least) $h$ elements $c$ with
$\dist_c\ge \lceil (n-m)/2\rceil$, all in $G\setminus H$. The
remaining $n-2h\ge 0$ elements of $G\setminus H$ satisfy $\dist_a\ge m$,
and we have
\begin{equation}\label{Eq:I2}
    \dist(\ga,\gb) \ge h\Big\lceil\frac{n-m}{2}\Big\rceil + (n-2h)m.
\end{equation}
(282 quadruples remain.)

By \lref{Lm:HalfH},
\begin{equation}\label{Eq:I3}
    \text{if $h=n/2$ then }\dist(\ga,\gb)\ge n^2/4.
\end{equation}
(207 quadruples remain, all with $m < n/3$ and $h<k$.)

Let again $a\ga b\ne a\gb b$, and assume $\dist_a=m$. Then $\dist_b + \dist_{a\ga b}\ge n-m$. By \lref{Lm:TwoCosets}, the cosets $H\ga b$ and $H\ga (a\ga b)$ are distinct. Since $\dist_c$ is constant within every right coset of $H$ by \lref{Lm:ConstantOnCosets}, there are $2h$ elements with average value of $\dist_c$ at least $(n-m)/2$. On one of these $2$ cosets, $\dist_c\ge (n-m)/2$, which puts this coset into $G\setminus K$, as $(n-m)/2 > n/3$ (using $m<n/3$). If we temporarily assume that $n-k<2h$, the second coset cannot be located in $G\setminus K$, so we have
\begin{equation}\label{Eq:I5}
    \text{if $n-k<2h$ then }\dist(\ga,\gb)\ge h(n-m) + (n-k-h)\lceil n/3\rceil + (k-2h)m.
\end{equation}
($188$ quadruples remain, all with $n-k\ge 2h$.)

Returning to the two cosets with average value of $\dist_c$ at least $(n-m)/2$, even if both are located within $G\setminus K$, we at least have
\begin{equation}\label{Eq:I6}
    \dist(\ga,\gb)\ge h(n-m) + (n-k-2h)\lceil n/3\rceil + (k-h)m.
\end{equation}
($99$ quadruples remain.)

In the previous inequality, we have used $\dist_a>m$ on $n-k$ rows. If $m=2$, there are at most $h+2\varphi(n/2)$ rows with $\dist_a=2$, by \lref{Lm:LimitOnm2}, so there are at least $n - (h+2\varphi(n/2)) - (n-k) = k-h-2\varphi(n/2)$ rows where we used $\dist_a=2$ in \eqref{Eq:I6} but could have used $\dist_a\ge 3$. This number of rows might be negative, but we certainly have
\begin{equation}\label{Eq:I7}
    \text{if $m=2$ then }
    \dist(\ga,\gb)\ge h(n-m) + (n-k-2h)\lceil n/3\rceil + (k-h)m + k-h-2\varphi(n/2).
\end{equation}
($89$ quadruples remain.)

Finally, we eliminate the case $n=32$:

\begin{lemma}[\cite{DrDiscr}, Lemma 4.4]\label{Lm:ManyFixedPoints2} Let $G(\ga)$, $G(\gb)$ be isomorphic $2$-groups of order $n$ satisfying $\dist(\ga,\gb)<n^2/4$. Then there exists a bijection $f:G\to G$ with at least $(n/4)(3+1/\sqrt{3})$ fixed points and such that $\gb = \ga_f$.
\end{lemma}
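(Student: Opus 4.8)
\emph{Proof proposal.} The plan is to reduce the statement to a short counting inequality for the bijection realizing the isomorphism. Fix any isomorphism $f:G(\ga)\to G(\gb)$, so $\gb=\ga_f$, and put $g=f^{-1}$, an isomorphism $G(\gb)\to G(\ga)$; unwinding $\gb=\ga_f$ gives $g(x\gb y)=g(x)\ga g(y)$, so $\ga$ and $\gb$ agree at $(x,y)$ exactly when $g(x\ga y)=g(x)\ga g(y)$. Let $M$ be the set of points moved by $g$ (equivalently by $f$), $e=|M|$ and $P=G\setminus M$. First I would exhibit the three pairwise disjoint families of entries
\[
  \{(x,y)\in P\times P:x\ga y\in M\},\qquad
  \{(x,y):x\in M,\ y\in P,\ x\ga y\in P\},\qquad
  \{(x,y):x\in P,\ y\in M,\ x\ga y\in P\},
\]
all of which lie in $\diff(\ga,\gb)$: in the first, $g(x\ga y)\ne x\ga y=g(x)\ga g(y)$; in the second and third, $g(x\ga y)=x\ga y$ while $g(x)\ga g(y)$ differs from $x\ga y$ by left, resp.\ right, cancellation in $G(\ga)$ since $g$ moves $x$, resp.\ $y$. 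A fibre-by-fibre count using only $|M|=e$ shows each family has at least $e(n-2e)$ members, so
\[
  \dist(\ga,\gb)\ \ge\ 3e(n-2e)
\]
whenever $e\le n/2$.

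Feeding this into the hypothesis $\dist(\ga,\gb)<n^2/4$ gives $3en-6e^2<n^2/4$, i.e.\ $24e^2-12en+n^2>0$, a quadratic in $e$ with roots $(3\pm\sqrt 3)n/12$; hence $e<(3-\sqrt 3)n/12$ or $e>(3+\sqrt 3)n/12$. Since $(3-\sqrt 3)n/12=(n/4)(1-1/\sqrt 3)$, the first alternative gives precisely $n-e>(n/4)(3+1/\sqrt 3)$ fixed points, which is the conclusion. So the whole game comes down to placing ourselves on the small branch: it suffices to know beforehand that $G(\ga)$ and $G(\gb)$ admit \emph{some} isomorphism moving fewer than $(3+\sqrt 3)n/12<n/2$ points, for then the displayed quadratic drives $e$ all the way down below $(3-\sqrt 3)n/12$. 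For this one wants a crude a priori bound on the best isomorphism between two close $2$-groups; here the $2$-group hypothesis is genuinely used, through the fact that the smallness of $\diff(\ga,\gb)$ confines its row-, column- and product-support to a bounded union of cosets of a common subgroup --- which is what \lref{Lm:Triple}, \lref{Lm:ConstantOnCosets} and \lref{Lm:TwoCosets} deliver --- forcing any optimal isomorphism to fix more than half of $G$.

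I expect the inequality $\dist(\ga,\gb)\ge 3e(n-2e)$ and the quadratic to be routine; the genuine obstacle is this ``correct branch'' step --- ruling out that \emph{every} isomorphism between $G(\ga)$ and $G(\gb)$ moves nearly all of $G$ --- which is where one either carries out Dr\'apal's coset analysis in earnest or simply quotes his structural classification of group pairs below quarter distance from \cite{DrDiscr}. A minor preliminary, to be done before any of this, is to reduce to $1(\ga)=1(\gb)=1$ so that $H=H(\ga,\gb)$ is an honest common subgroup containing $1$; the case of distinct neutral elements puts whole rows and columns into $\diff(\ga,\gb)$ and is disposed of separately, in the style of \lref{Lm:Neutral}.
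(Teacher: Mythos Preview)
The paper does not prove this lemma; it is quoted from Dr\'apal \cite{DrDiscr} without argument, so there is no ``paper's own proof'' to compare against. Your counting inequality $\dist(\ga,\gb)\ge 3e(n-2e)$ for any isomorphism moving $e\le n/2$ points is correct, the three families are genuinely disjoint subsets of $\diff(\ga,\gb)$, and the quadratic analysis is fine.

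The difficulty is that the step you call the ``genuine obstacle'' is not a loose end but the entire content of the lemma. There is moreover a concrete quantitative gap in your sketch of it. You correctly identify that what is needed is an isomorphism with $e<(3+\sqrt 3)n/12\approx 0.394\,n$, yet the conclusion you claim from the coset lemmas is only ``fix more than half of $G$'', i.e.\ $e<n/2$. The interval $\bigl((3+\sqrt 3)n/12,\,n/2\bigr)$ is nonempty, so an isomorphism landing there would satisfy your a~priori bound and still sit on the wrong branch of the quadratic; the argument stalls. Furthermore, \lref{Lm:Triple}, \lref{Lm:ConstantOnCosets} and \lref{Lm:TwoCosets} describe the structure of $\diff(\ga,\gb)$ but do not by themselves manufacture any isomorphism, let alone one with a fixed-point bound; the nearest such statement in this paper is \pref{Pr:3n4}, whose hypothesis $k>3n/4$ is not available here. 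A telling sign that the real work lies elsewhere is that your inequality $\dist\ge 3e(n-2e)$ makes no use of the $2$-group hypothesis, whereas the lemma is stated only for $2$-groups. In effect your proposal reduces the lemma to a statement of the same strength, whose proof is precisely what \cite{DrDiscr} is devoted to.
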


\begin{corollary}\label{Cr:32}
Let $G(\ga)$ be a group of order $32$. Then $\cst_{\not\cong}(\ga) > \cst_{\cong}(\ga) = \cst_0(\ga) = 168$, and there is a transposition $g:G\to G$ such that $\cst(\ga) = \dist(\ga,\ga_g)$.
\end{corollary}
\begin{proof}
Let $n=32$. Recalling the results from the Introduction, we know that
$\cst_{\not\cong}(\ga) \ge n^2/4 > \cst_0(\ga) = 6\cdot 32-24 =
168$. Let $G(\gb)\cong G(\ga)$ be such that
$\cst(\ga)=\dist(\ga,\gb)$. Since $\cst(\ga)<n^2/4$,
\lref{Lm:ManyFixedPoints2} yields a bijection $f:G\to G$ with at least
$(n/4)(3+1/\sqrt{3}) > 2n/3$ fixed points. By
\pref{Pr:ManyFixedPoints}, $\dist(\ga,\gb)\ge\cst_0(\ga)$. We are done
by Theorem \ref{Th:DrEJC}.
\end{proof}

The remaining $82$ quadruples $(n,h,k,m)$ are as follows (quadruples with the same $n$, $h$, $m$ are grouped):
\begin{equation}
\begin{array}{lll}
    ( 23, 1, \{13,14,15,16,17\}, 3 ), &( 23, 1, \{16,17\}, 4 ), &( 24, 1, \{14,15,16,17,18\}, 2 ),\\
    ( 24, 1, \{15,16,17,18\}, 3 ), &( 24, 1, 18, 4 ), &( 24, 2, \{14,16,18\}, 2 ),\\
    ( 24, 2, \{16,18\}, 3 ), &( 24, 2, 18, 4 ), &( 24, 3, \{15,18\}, 2 ),\\
    ( 24, 3, 18, 3 ), &( 24, 3, 18, 4 ), &( 24, 4, 16, 2 ),\\
    ( 24, 4, 16, 3 ), &( 25, 1, \{16,17,18\}, 3 ), &( 26, 1, \{15,16,17,18,19\}, 2 ),\\
    ( 26, 1, \{17,18,19\}, 3 ), &( 26, 2, \{16,18\}, 2 ), &( 26, 2, 18, 3 ),\\
    ( 27, 1, \{17,18,19,20\}, 3 ), &( 27, 1, 20, 4 ), &( 27, 3, 18, 3 ),\\
    ( 28, 1, \{19,20,21\}, 2 ),  &(28, 1, \{20,21\}, 3 ), &( 28, 2, 20, 2 ),\\
    ( 28, 2, 20, 3 ), &( 28, 4, 20, 2 ), &( 29, 1, \{20,21\}, 3 ),\\
    ( 30, 1, \{19,20,21,22\}, 2 ), &( 30, 1, \{21,22\}, 3 ), &( 30, 2, \{20,22\}, 2 ),\\
    ( 30, 2, 22, 3 ), &( 30, 3, 21, 2 ), &( 31, 1, \{22,23\}, 3 ),\\
    ( 33, 1, 24, 3 ), &( 34, 1, \{23,24,25\}, 2 ), &( 34, 2, 24, 2 ),\\
    ( 35, 1, 26, 3 ), &( 36, 1, 27, 2 ), &( 38, 1, \{27,28\}, 2 ),\\
    ( 38, 2, 28, 2 ), &( 42, 1, 31, 2 ).&
\end{array}\label{Eq:Quadruples}
\end{equation}

\section{Special row differences}\label{Sc:m2}

\subsection{The case $m=2$}\label{Ss:m2}

In this subsection we describe an algorithm that determines all pairs of groups $G(\ga)$, $G(\gb)$ with $m(\ga,\gb)=2$.

By \pref{Pr:m2}, we can assume that $G(\gb)$ is a fixed cyclic group of even order $n$, and there is $a\in G$ such that $|a|_\gb = n$, $|a|_\ga = n/2$.

The automorphism group $\aut(C_n)$ acts transitively on the generators of $C_n$. Thus, if $b$ is a generator of $G(\gb)$, there is $f\in\aut(\gb)$ such that $f(a)=b$. By \lref{Lm:IsoMove}, we then have $\dist_a(\ga,\gb) = \dist_{f(a)}(\ga_f,\gb_f) = \dist_b(\ga_f,\gb)$ and $\dist(\ga,\gb) = \dist(\ga_f,\gb)$. We can therefore assume without loss of generality that $a$ is a fixed generator of $G(\gb)$.

The input of the algorithm is a cyclic group $G(\gb)=C_n$ and its generator $a$. To obtain $\dist_a(\gb,\ga)=2$, we must modify the row $a$ of $G(\gb)$ in two places; say there are $v\ne w$ such that $a\ga b = a\gb b$ except for $a\ga v = a\gb w$, $a\ga w = a\gb v$. Since $a\ga b$ is now determined for every $b\in G$, we can see if $|a|_\ga = n/2$, as desired. If not, we choose different $v$, $w$.

Assume now that the locations $v$, $w$ of differences in row $a$ were chosen so that $|a|_\ga=n/2$. Let $A$ be the subgroup generated by $a$ in $G(\ga)$, and let $b$ be any element of $G\setminus A$. Denote by $a^i$ the $i$th power of $a$ in $G(\ga)$. Since $G= A\cup (A\ga b) = A\cup (b\ga A)$, we must have $b\ga a = a^\alpha\ga b$ for some $1\le \alpha<n/2$, and $b\ga b = a^\beta$ for some $0\le \beta<n/2$. Once the parameters $\alpha$, $\beta$ are chosen, the operation $\ga$ is determined, namely:
\begin{align*}
    a^i\ga a^j &= a^{i+j},\\
    a^i\ga (a^j\ga b) &= a^{i+j}\ga b,\\
    (a^i\ga b)\ga a^j &= a^i\ga (b\ga a^j) = a^i\ga (a^{j\alpha}\ga b) = a^{i+j\alpha}\ga b,\\
    (a^i\ga b)\ga (a^j\ga b) &= a^i\ga (b\ga a^j)\ga b = a^{i+j\alpha}\ga b\ga b = a^{i+j\alpha+\beta},
\end{align*}
for $0\le i$, $j<n/2$. We do not claim that this operation defines a group, only that there is no alternative way to define $\circ$ that does produce a group (as it happens, the smallest distance is achieved when $\ga$ does define a group).

It therefore suffices to consider all choices of $v$, $w$, $\alpha$, $\beta$ and find the resulting groups closest to $G(\gb)$.
Both authors independently ran this algorithm and discovered that in all cases the nearest group $G(\ga)$ was isomorphic to $C_{n/2}\times C_2$ and satisfied
\[
\dist(\ga,\gb)=
\begin{cases}
n^2/4&\text{when }n\equiv0\mod4,\\
n^2/4-1&\text{when }n\equiv2\mod4.\\
\end{cases}
\]

Since $n^2/4-1>\cst_0(n)$ when $n>20$, the quadruples of
\eqref{Eq:Quadruples} with $m=2$ can therefore be eliminated. (43
quadruples remain.)

\subsection{Some cyclic cases}\label{Ss:Cyclic}

Among the remaining orders $n$ of \eqref{Eq:Quadruples}, if $n$ belongs to $\{23$, $29$, $31$, $33$, $35\}$, the only group of order $n$ is the cyclic group $C_n$. For these orders, the search therefore amounts to determination of $\dist([C_n],[C_n])$, a difficult task in general.

Let $G(\ga)$ be a cyclic group of order $n$. For any group $G(\gb)$, define
\begin{displaymath}
    m' = m'(\ga,\gb) = \textstyle\min\{\dist_a(\ga,\gb);\;|a|_\ga = n\}.
\end{displaymath}
Recall that $C_n$ has $\varphi(n)$ generators. Since $m'$ might be bigger than $m$, we can refine \eqref{Eq:I6} as follows,
\begin{equation}\label{Eq:Ig}
    \dist(\ga,\gb)\ge h(n-m) + (n-k-2h)\lceil n/3\rceil + (\varphi(n) - (n-k))m' + (n-\varphi(n)-h)m,
\end{equation}
where we first count elements in the two cosets of $H$, then all remaining elements of $G\setminus K$, then all remaining generators, and then the remaining elements in $G\setminus H$, if any.

To eliminate all remaining quadruples with $n\in\{29,31,33,35\}$ (resp.\ $n=23$), it suffices to set $m'=4$ (resp.\ $m'=5$) in \eqref{Eq:Ig}.

We are therefore interested in the following algorithm, with parameter
$d$: Given $G(\ga)\cong C_n$, find $G(\ga)\cong C_n$ closest to
$G(\gb)$ that has $\dist_a(\ga,\gb)=d$ for some generator $a$ of
$G(\ga)$.

The idea is similar to Subsection \ref{Ss:m2}, but we reverse the
roles of the groups $G(\ga)$ and $G(\gb)$. Let $a\in G$ be such that
$|a|_{\gb}=\ell$. We wish to have $|a|_{\ga}=n$ and
$\dist_a(\ga,\gb)=m'$. By \lref{Lm:OrderMismatched}, we can assume
that $n/\ell\le d$ (since $|a|_{\ga}=n$), that is, $\ell\ge n/d$.

Let us fix $a\in G$ with the above properties. We now need to make $d$
changes to row $a$ of $G(\gb)$, focusing on only those changes that
result in $|a|_{\ga}=n$. Once such a change is made, the group
$G(\ga)$ is determined.

\begin{remark}
When $n$ is a prime, the search can be sped up by taking advantage of
the automorphism group of $C_n$ (since all nonidentity elements are
generators), and by analyzing which permutations of $\diff_a(\ga,\gb)$
result in $|a|_{\ga}=n$. See \cite{VoMS} or \cite{VoProc} for details.
We did not employ these improvements here in order to keep the code
simpler.
\end{remark}

For every quadruple $(n,h,k,m)$ of \eqref{Eq:Quadruples} with
$n\in\{23,29,31,33,35\}$, the algorithm (with $d=3$ if
$n\in\{29,31,33,35\}$ and with $d\in\{3,4\}$ if $n=23$) returns
minimal distance at least as big as $\cst_0(n)$. (30 quadruples
remain.)

\section{General algorithm for $\dist([\ga],[\gb])$}\label{Sc:Algorithm}

Here is an algorithm that finds $d=\dist([\ga],[\gb])$. By \pref{Pr:IsoDist}, we have $d=\dist([\ga],\gb) = \min\{\dist(\ga_f,\gb);\;f:G\to G$ is a bijection, $G(\ga_f)\ne G(\gb)\}$.

\bigskip
\hrule
\bigskip

When $n<5$ a brute force algorithm is sufficient. Let us therefore assume that $n\ge 5$ and, by Lemma \ref{Lm:Neutral}, that $f(1)=1$ and thus $1\in H$.

Either $H=1$ or there exists a prime $p$ and a subgroup $\overline{H}\le H$ of $G(\gb)$ of order $p$. The main cycle of the algorithm proceeds over all subgroups $\overline{H}\le G(\gb)$ of prime order $p$ or $p=1$, with $|\overline{H}|$ in descending order. From now on we will write $H$ instead of $\overline{H}$, since the fact that $H$ might be larger is irrelevant in the search.

Assume that $\dist_{min}$ is the smallest distance found by the algorithm so far, and let $H\le G(\gb)$, $|H|=p$ be given. We need to consider all bijections $f:G\to G$ such that $G(\gc) = G(\ga_{f^{-1}})$ and $G(\ga)$ agree on at least $H$. The inverse $f^{-1}$, rather than $f$, is used for notational convenience, and we then have $f(a\gc b) = f(a)\ga f(b)$.

The algorithm is a depth-first search on all partially defined $1$-to-$1$ maps $f:G\to G$, where the maps are lexicographically ordered as follows: Let $\dom(f)$ denote the domain of $f$, and let $G=\{1,\dots,n\}$. Let $f$, $g:G\to G$ be two partially defined maps. Then we say that $g<f$ if and only if there exists $i\in\dom(f)$ such that (a) for every $j\le i$, if $j\in\dom(f)$ then $j\in\dom(g)$, (b) for every $j<i$, if $j\in\dom(f)$ then $g(j)=f(j)$, (c) $g(i)<f(i)$.

The search starts as follows: Let $x$ be a generator of $H$. Then $f(x)$ is an element of order $p$ in $G(\ga)$, because we demand that $x\in H(\gc,\gb) = H$ and that $f:G(\gc)\to G(\ga)$ is an isomorphism. The second cycle of the algorithm is therefore over all elements $y=f(x)$ such that $|y|_\ga=p$.

Once $f(x)$ is known, we can extend $f$ onto $H$. Indeed, we have $f(x\gb x) = f(x\gc x)$ by our assumption that $H=H(\gc,\gb)$, and $f(x\gc x) = f(x)\ga f(x)$ because $f:G(\gc)\to G(\ga)$ is a homomorphism. Similarly for higher powers of $x$.

To extend the domain of $f$ further, we systematically choose $b\not\in\dom(f)$, $c\not\in\im(f)$, and declare $f(b)=c$. Once again, we can now extend $f$ onto the coset $H\gb b$, as for $y\in H$ we must have $f(y\gb b) = f(y\gc b) = f(y)\ga f(b)$.

Anytime we extend the domain of $f$ by another coset of $H$, we can calculate the guaranteed distance between the partially defined group $G(\gc)$ and the group $G(\gb)$ by counting only those pairs $(a,b)$ that satisfy: $a\in\dom(f)$, $b\in\dom(f)$, $a\gc b\in\dom(f)$ and $f(a\gc b)\ne f(a)\ga f(b)$. If this distance exceeds $\dist_{min}$, we terminate this branch of the depth-first search.

Whenever we extend the domain of $f$ by another coset, we consider the automorphisms $g\in\aut(\ga)$ and $\ell\in \aut(\gb)$. By \lref{Lm:Aut}, $\dist(\ga_{\ell fg},\gb) = \dist(\ga_{f},\gb)$. It is also easy to see that $H(\ga_{\ell fg}, \gb) = H(\ga_f,\gb)$. Therefore, if $\ell fg<f$, we have seen $\ell fg$ before $f$ (in this cycle with the same $H$), $f$ cannot do better than $\ell fg$ as far as distance is concerned, so we terminate the branch.

If $\dom(f)=G$ anytime in the search, we calculate the full distance $\dist(\gc,\gb)$ and compare it to $\dist_{min}$.

\bigskip
\hrule
\bigskip

The following improvements make the algorithm faster.
\begin{enumerate}
\item[-] the distance $\dist(\gc,\gb)$ is calculated incrementally, in every step considering only rows, columns and values from the coset of $H$ on which $f$ has just been defined,
\item[-] the comparison of $\ell fg$ to $f$ is costly, and it is better to stop using it in the search from a certain (heuristically determined) depth in the search,
\item[-] assuming that the algorithm has gone through all values of $p>1$ and is now in the cycle $p=1$, the guaranteed distance can be calculated with a bonus. Namely, since we have $H=1$ at this stage, we can assume that every row not in the domain of $f$ contains $2$ (resp. $3$) differences when $n$ is even (resp. odd), by \lref{Lm:MinimalM}.
\end{enumerate}

The algorithm is sufficiently fast to deal with all orders $n\le 22$, albeit in some cases we merely verified that $\dist([\ga],[\gb])$ exceeds $\cst(\ga)$, without actually determining $\dist([\ga],[\gb])$. The case $n=22$ alone took more than a week of computing time. It was therefore of some importance that we could assume $G(\ga)\cong G(\gb)$ when $n=22$, by \lref{Lm:2p}.

The results of the search for $n\le 22$ are summarized in Theorem \ref{Th:Main}.

The algorithm can also be used to eliminate all remaining cases of \eqref{Eq:Quadruples} with $h>1$; we simply do not run the algorithm with any values $p$ less than $h$. This leaves us with the following twenty quadruples $(n,h,k,m)$:
\begin{equation}\label{Eq:Quadruples2}
\begin{array}{lll}
    ( 24, 1, \{15,16,17,18\}, 3 ),& (24, 1, 18, 4), &(25, 1, \{16,17,18\}, 3)\\
    ( 26, 1, \{17,18,19\}, 3 ),&( 27, 1, \{17,18,19,20\}, 3 ), & ( 27, 1, 20, 4 )\\
    ( 28, 1, \{20,21\}, 3 ),&( 30, 1, \{21,22\}, 3 ).&
\end{array}
\end{equation}
We eliminate them in Section \ref{Sc:Stubborn}, but first we need to introduce results on rainbow matchings in edge-colored graphs.

\section{Rainbow matchings and the graph $\Gamma_U$}\label{Sc:Graph}

Call an edge-colored graph \emph{restricted} if it has at most $3$
edges of any given color, and if at most two edges of the same color
are incident at any vertex. Recall that a \emph{rainbow
  $\ell$-matching} in an edge-colored graph is a set of $\ell$
disjoint edges colored by distinct colors. For $v>1$ and $\ell>0$,
define $\mu_\ell(v)$ to be the minimum number of edges a
restricted graph on $v$ vertices must have in order to guarantee a
rainbow $\ell$-matching. If there exists a coloring of the
complete graph on $v$ vertices that yields a restricted graph without
a rainbow $\ell$-matching, then we define $\mu_\ell(v)=\binom{v}{2}+1$.

\begin{proposition}\label{Pr:Graph} We have $\mu_1(v)=1$ for every $v\ge 2$, $\mu_2(v)=7$ if $4\le v\le 6$, $\mu_2(v)=v$ if $v\ge 7$, $\mu_3(6)=13$, $\mu_3(7)=15$, $\mu_3(8)=15$, $\mu_3(9)=16$ and $\mu_3(10)=18$.
\end{proposition}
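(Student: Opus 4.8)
The plan is to compute each of the listed values of $\mu_\ell(v)$ by establishing matching upper and lower bounds. For the upper bound on $\mu_\ell(v)$ one must show that \emph{every} restricted edge-colored graph on $v$ vertices with the stated number of edges contains a rainbow $\ell$-matching; for the lower bound one exhibits a restricted graph on $v$ vertices with one fewer edge and no rainbow $\ell$-matching. I would organize the proof by the value of $\ell$.

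The case $\ell=1$ is immediate: a single edge is a rainbow $1$-matching, and a graph with no edges has none, so $\mu_1(v)=1$ for all $v\ge 2$. For $\ell=2$ I would argue as follows. If $v\ge 7$ and the graph has $v$ edges, it contains a cycle; take a shortest cycle $C$. If $C$ has length $\ge 4$ it already contains two disjoint edges, and since each color appears on at most $3$ edges we can choose two disjoint edges of $C$ with distinct colors unless $C$ is a monochromatic $4$-cycle — but a monochromatic $4$-cycle has $4$ edges of one color at only $4$ vertices, two incident at each vertex, which is allowed, so one must instead use that $v\ge 7$ forces an edge outside $C$; a short case analysis on whether that edge shares a color with $C$ finishes it. If the shortest cycle is a triangle, at most $3$ of its edges share a color (in fact exactly the triangle could be monochromatic), and again the extra edges guaranteed by $v\ge 7$ together with the edge count give a disjoint differently-colored pair. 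For $4\le v\le 6$ the lower bound comes from a specific restricted coloring of $K_v$ (or a $6$-edge subgraph) with no rainbow $2$-matching — essentially a graph where every two disjoint edges share a color, which one checks is realizable on up to $6$ vertices with at most $3$ edges per color but fails at $7$ edges; the upper bound $\mu_2(v)\le 7$ is a finite check.

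For $\ell=3$ and $v\in\{6,7,8,9,10\}$ the argument is necessarily more computational. I would proceed by: (i) constructing for each $v$ an explicit restricted coloring achieving the lower bound — for instance for $\mu_3(6)=13$ a $12$-edge restricted graph on $6$ vertices with no rainbow triangle-free $3$-matching (note $\binom{6}{2}=15$, so $12$ edges is dense and the extremal configuration is delicate), and analogously $14$-edge graphs for $v=7,8$, a $15$-edge graph for $v=9$, and a $17$-edge graph for $v=10$; (ii) proving the upper bounds by assuming a restricted graph $\Gamma$ on $v$ vertices with the stated edge count but no rainbow $3$-matching, and deriving a contradiction. For step (ii) the natural strategy is to take a maximum rainbow matching $M$ (of size at most $2$ by assumption), look at the at least $v-4$ vertices outside $M$, and count edges: all edges either lie inside $V(M)$, or join $V(M)$ to $V\setminus V(M)$, or lie inside $V\setminus V(M)$ but then must be colored by one of the (few) colors used on $M$ or be blockable — each such edge, if disjoint from $M$ and of a new color, would extend $M$. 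Bounding the number of edges of each type using the restricted condition (at most $3$ per color, at most $2$ per color per vertex) gives an upper bound on $|E(\Gamma)|$ that contradicts the hypothesis once $v$ is as large as stated.

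The main obstacle I expect is the $\ell=3$ upper bounds, particularly the borderline cases $\mu_3(8)=15$ and $\mu_3(6)=13$: here the edge counts are so close to $\binom{v}{2}$ that the crude "count edges by type" bound is not by itself sufficient, and one has to extract structural information about how the colors of $M$ are forced to appear (e.g. that all edges inside $V\setminus V(M)$ reuse the two colors of $M$, that these reused colors saturate their quota of $3$ edges each, and that the $M$-to-outside edges are heavily constrained), then rule out the remaining finitely many configurations by hand or by a small computer search. The lower-bound constructions, while explicit, will also require care to verify simultaneously that they are restricted and genuinely avoid a rainbow $3$-matching; I would present these as labeled edge-colorings and check the two properties directly.
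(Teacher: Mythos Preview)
Your overall plan---matching upper bounds (every restricted graph with that many edges has the rainbow matching) against explicit lower-bound constructions---is exactly what the paper does. The main difference is one of emphasis and method for the upper bounds: the paper does \emph{not} give hand proofs for the $\ell=3$ values. It establishes all of $\mu_3(6),\dots,\mu_3(10)$ by an exhaustive computer search (building restricted graphs one color class at a time, with isomorph rejection via \texttt{nauty}), and even defers the borderline $\mu_2(7)=7$ case to either ``more detailed case analysis'' or the computer. So your instinct that the $\ell=3$ upper bounds are where a purely combinatorial argument runs out of steam is exactly right, and the paper simply doesn't attempt one; your ``or by a small computer search'' is in fact the paper's primary method, not a fallback.

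Two smaller points. First, your $\ell=2$, $v\ge 7$ sketch is more complicated than necessary and contains a slip: a monochromatic $4$-cycle would have four edges of one color, which is already forbidden in a restricted graph, so that case cannot arise. The paper's argument is cleaner: with $v\ge 7$ edges there is a $2$-matching; if there is no rainbow $2$-matching its two edges share a color $c$; any non-$c$ edge must then meet both matching edges, giving at most four such edges, plus at most one more edge of color $c$, for at most seven edges total---contradiction once $v\ge 8$, with $v=7$ handled separately. Second, for the lower bounds the paper uses a uniform device you might find tidier than ad hoc constructions: take a set of $\ell-1$ vertices covering all edges, so there is no $\ell$-matching of any kind; this gives $(\ell-1)(v-\ell/2)$ edges and already hits the exact value for $\mu_1$, for $\mu_2(v)$ with $v\ge 7$, and for $\mu_3(9)$, $\mu_3(10)$. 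Only $\mu_2(v)$ for $4\le v\le 6$ and $\mu_3(v)$ for $6\le v\le 8$ need bespoke extremal graphs, which the paper supplies explicitly.
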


We now describe the algorithm used to
establish Proposition~\ref{Pr:Graph}.  The aim was to find the
greatest number of edges that a restricted graph on $v$ vertices can
have without containing a rainbow $\ell$-matching.  We began with an
empty graph on $v$ vertices, and added the edges one color at a
time. We will refer to the process of adding all the edges of a
particular color as a {\em stage}. In each stage, we read in each of
the graphs from the previous stage, one at a time, added edges of
the new color in all possible ways, and output any graph which was
not isomorphic (by an isomorphism that respects the edge coloring,
but is allowed to permute colors) to a graph we had already seen. The
isomorphism testing was accomplished by \texttt{nauty} \cite{nauty}.

After a graph was read in stage $c$, we found all rainbow
$(\ell-1)$-matchings in it. Any edge disjoint from any such matching is
unavailable to be colored $c$. Typically this rule leaves very few
edges still available. We also sped up the search by making several
other assumptions. Firstly, since all isolated vertices are isomorphic,
vertex $j+1$ would not be connected to its first edge before vertex $j$
was. Secondly, for $c>1$ we insisted that there were not more edges of
color $c$ than there were of color $c-1$. Thirdly, we assumed that
there was at most one color which occurs on only one edge. This last
assumption is justified because if two colors each only occurred on
one edge then we could replace those two colors by a single color.
The result would still be a restricted graph, and would not have a
rainbow $\ell$-matching unless the original graph did.

As a partial validation of our computations, it is easy to confirm by
hand that the values quoted in Proposition~\ref{Pr:Graph} are lower
bounds on $\mu_\ell(v)$. First note that we can prevent a rainbow
$\ell$-matching by having no $\ell$-matchings at all. This can be achieved
by having a set of $\ell-1$ vertices that cover all edges, in which case
we can have up to ${\ell-1\choose 2}+(\ell-1)(v-\ell+1)=(\ell-1)(v-\ell/2)$ edges.
Thus $\mu_\ell(v)\ge 1+(\ell-1)(v-\ell/2)$ whenever $v\ge \ell-1$.  This
elementary lower bound is actually achieved for $\mu_1(v),\,v\ge1$;
$\mu_2(v),\,v\ge7$; and $\mu_3(v),\,v\in\{9,10\}$. To give a lower
bound for the other values quoted in Proposition~\ref{Pr:Graph}, we
display in \fref{f:rnbw} graphs with (a) 4 vertices, 6 edges and no
rainbow $2$-matching, (b)
%6 vertices, 12 edges and no rainbow $3$-matching, and (c)
7 vertices, 14 edges and no rainbow $3$-matching. Edge colors are
indicated by the different styles of lines. By deleting either
of the degree 2 vertices from (b) we obtain a graph with
6 vertices, 12 edges and no rainbow $3$-matching. These examples show
that $\mu_2(v)\ge7$ for $v\ge4$, $\mu_3(6)\ge13$
and $\mu_3(8)\ge\mu_3(7)\ge15$.

\begin{figure}[htb]
\[(a)\;\includegraphics[scale=0.6]{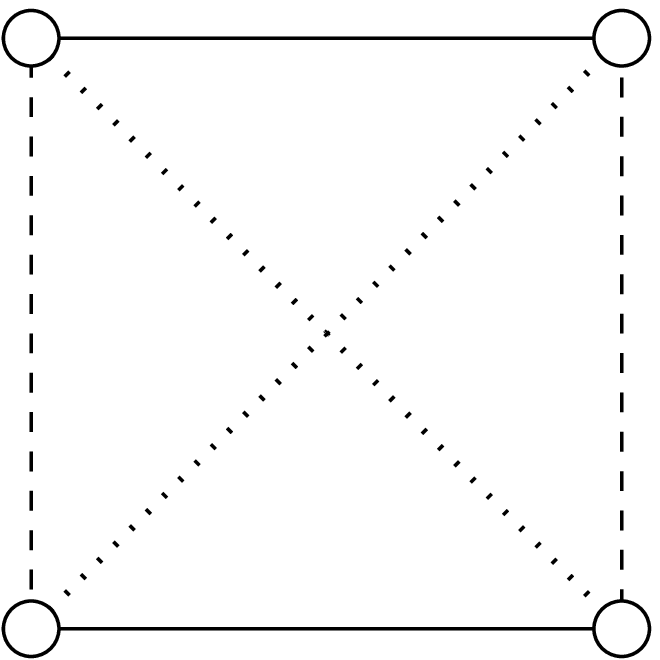}\qquad
(b)%\;\includegraphics[scale=0.5]{}\quad(c)
\;\includegraphics[scale=0.6]{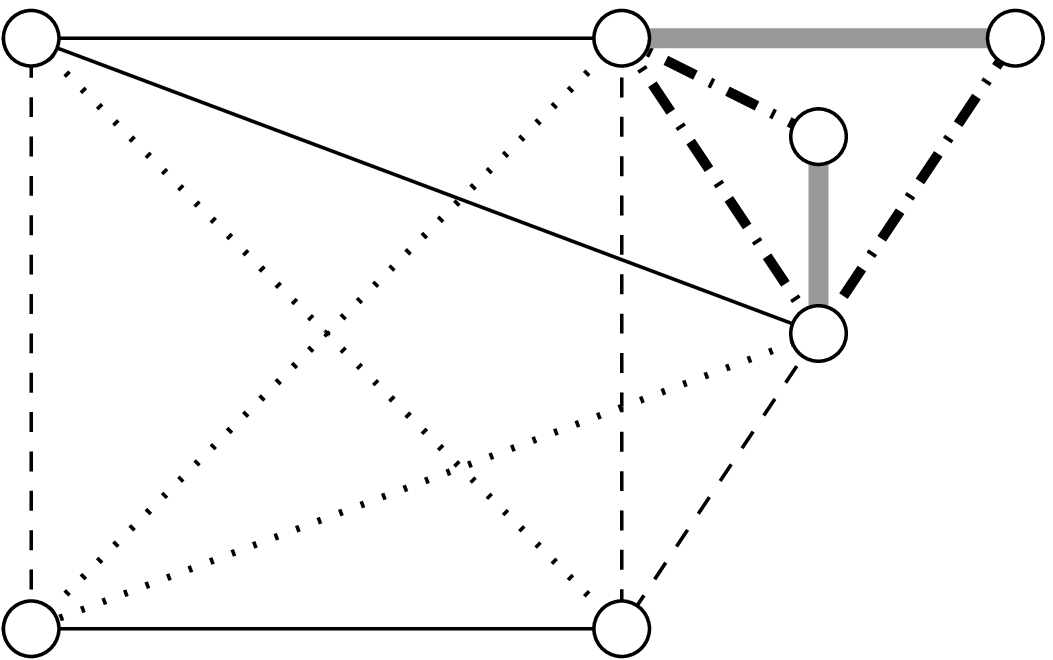}
\]
\caption{\label{f:rnbw}Restricted graphs giving lower bounds for Proposition~\ref{Pr:Graph}.}
\end{figure}

The statement in \pref{Pr:Graph} that $\mu_2(v)=v$ for $v\ge7$ is
easily seen.  We have already argued that $\mu_2(v)\ge v$.  Suppose we
have a restricted graph with $v\ge7$ vertices and $v$ edges and
no rainbow $2$-matching.  Any graph with $v>3$ vertices and $v$ edges
has a $2$-matching; in our case both edges must have
the same color $c$. Every edge of color different from $c$ must join
the two edges of the $2$-matching, and there are only 4 possible
places to put such an edge. There may be a third edge of color $c$, but
that is all. Thus our graph has at most $7$ edges. The case $v=e=7$ can
be handled by more detailed case analysis, or ruled out by our computer
programs.

Let us now return to the problem of distances of groups. The following subsets of $\diff(\ga,\gb)$ will play an important role in the analysis of the cases \eqref{Eq:Quadruples2}. Let
\begin{equation}\label{e:defRSTU}
\begin{split}
    R = R(\ga,\gb)&= \{(a,a)\in \diff(\ga,\gb);\;a\in K\},\quad r=r(\ga,\gb)=|R|,\\
    S = S(\ga,\gb)&= \{(a,b)\in \diff(\ga,\gb);\;a\in K,\,b\in K,\,a\ne b\},\quad s=s(\ga,\gb)=|S|,\\
    T = T(\ga,\gb)&= \{(a,b)\in \diff(\ga,\gb);\;a\in K,\,a\ga b\in K\},\quad t=t(\ga,\gb)=|T|,\\
    U' = U'(\ga,\gb)&= \{(a,b)\in \diff(\ga,\gb);\;a\in K,\,a\ga b\not\in K,\,b\not\in K\}.
\end{split}
\end{equation}
Note that, $R$, $S$, $T$, $U'$ are disjoint and $R\cup S\cup T\cup U'
= \diff(\ga,\gb)\cap (K\times G)$, a set that contains
at least $m\ge3$ elements in every row indexed by $K\setminus H$.  Let $U$
be any minimal subset of $U'$ subject to the condition that $R\cup
S\cup T\cup U$ contains at least $3$ elements within each row indexed
by $K\setminus H$. Let $u=u(\ga,\gb)=|U|$. We have
\begin{equation}\label{Eq:rstu}
    r+s+t+u\ge 3(k-h).
\end{equation}

Note that if $(a,b)\in S(\ga,\gb)$, then we must have $a\ga b\not\in K$
(and $a\gb b\not\in K$), since otherwise $\dist_a+\dist_b+\dist_{a\ga b} < n$
(and $\dist_a+\dist_b+\dist_{a\gb b}< n$), a contradiction of
\lref{Lm:Triple}. Similarly, if $(a,b)\in T(\ga,\gb)$ then $b\not\in
K$.

Define a multigraph $\Gamma'_U$ on vertices $V=G\setminus K$ by
declaring $\{x,y\}\subseteq V$ to be an edge if and only if $x\ne y$
and $\{x,y\}=\{b,a\ga b\}$ for some $(a,b)\in U$. Such an edge
$\{x,y\} = \{b,a\ga b\}$ will be colored $a$.

If $\{x,y\}=\{b,a\ga b\} = \{d,c\ga d\}$ is an edge of $\Gamma'_U$ for
some $(a,b)$, $(c,d)\in U$, one of the following situations occurs. If
$b=d$ then $a\ga b=c\ga b$, $a=c$, and $(a,c)=(b,d)$. Otherwise
$b=c\ga d$, $d=a\ga b$, $a\ga c\ga d=d$, and $c=a^\ga$. Therefore
$\Gamma'_U$ has at most two edges between any two given vertices. If
two distinct edges colored $a$ are incident to a vertex of
$\Gamma'_U$, they are of the form $\{b,a\ga b\}$, $\{c,a\ga c\}$ for
some $b\ne c$. Then, without loss of generality, we have $b=a\ga
c$. This means that no more than two distinct edges colored $a$ are
incident to a vertex of $\Gamma'_U$.

Let $\Gamma_U$ be the simple subgraph of $\Gamma'_U$ obtained by
suppressing any multiple edges.  By construction, $\Gamma_U$ is a
restricted graph on $n-k$ vertices.  Moreover, any edge of $\Gamma_U$
colored $a$ stems from some element $(a,b)\in U$.  Later we will use
\eqref{Eq:rstu} to find a lower bound for $u$.  In creating $\Gamma_U$
from $\Gamma'_U$, there are at least $\lceil u/2\rceil$ edges that
remain. Having built a restricted graph with at least a certain number
of edges, we will be in a position to employ \pref{Pr:Graph}.

\section{Eliminating cases with a rainbow $3$-matching in $\Gamma_U$}\label{Sc:ApplyGraph}

For the rest of this section, fix $G(\ga)$, $G(\gb)$, assume that $m(\ga,\gb)\ge 3$, let $q=\lceil n/3\rceil$, and let
\begin{displaymath}
    \pi = \dist(\ga,\gb) - \big((k-h)m + (n-k)q\big)
\end{displaymath}
be the number of differences above those guaranteed by the fundamental inequality \eqref{Eq:I4}. We will refer to $\pi$ as the \emph{profit}. If we wish to indicate the profit obtained in particular rows $r_1$, $\dots$, $r_\ell$, we use the notation $\pi(r_1,\dots,r_\ell)$.

We present a series of lemmas that eliminate most quadruples of
\eqref{Eq:Quadruples2}. While attempting to eliminate a quadruple
$(n,h,k,m)$ from \eqref{Eq:Quadruples2}, we proceed as follows: We use
Lemmas \ref{Lm:R}, \ref{Lm:LargeR} and, if $n=2p$, also \lref{Lm:2pR},
to obtain an upper bound on $r$, with default bound $r\le k-h$. Lemmas
\ref{Lm:RowS} and \ref{Lm:7S} yield an upper bound on $s$, with
default bound $s\le (k-1)(k-h)$. The dual Lemmas \ref{Lm:RowT} and
\ref{Lm:7T} yield an upper bound on $t$, with default bound
$t\le(n-k)(k-h)$. Then \eqref{Eq:rstu} provides a lower bound for $u$.
Recall that there are $n-k$ vertices and at least
$\lceil u/2\rceil$ edges in $\Gamma_U$. We then use \pref{Pr:Graph}
to determine the maximal $\ell$ such that
$\lceil u/2\rceil\ge\mu_\ell(n-k)$.  Finally, we apply
\lref{Lm:Matching}, and if this yields a sufficient profit then
$(n,k,h,m)$ is eliminated.

The challenge is not to count profit on the same row more than once. We often use the following \emph{disjunction tricks} to make sure that this does not happen. If $(a,a)\in R$ then we have $2\dist_a+\dist_{a\ga a}\ge n$ (by Lemma \ref{Lm:Triple} that we are going to use without reference) and $2\dist_a+\dist_{a\gb a}\ge n$. Thus $\pi(a\ga a)$, $\pi(a\gb a)\ge n-(q-1)$ and we are free to choose one of the two distinct rows $a\ga a$, $a\gb a$ of $G\setminus K$. If $(a,b)\in S$ then $\dist_a+\dist_b+\dist_{a\ga b}\ge n$ and $\dist_a+\dist_b+\dist_{a\gb b}\ge n$. Since $a$, $b\in K$, we must have $a\gb b\in G\setminus K$, too, $\pi(a,b,a\ga b)$, $\pi(a,b,a\gb b)\ge n-(2m+q)$, and we are free to choose one of the two distinct rows $a\ga b$, $a\gb b$ of $G\setminus K$. Finally, if $(a,b)\in T$, then again $\dist_a+\dist_b+\dist_{a\ga b}\ge n$, $\dist_a+\dist_b+\dist_{a\gb b}\ge n$, we have $a\ga b\in K$, but we might have $a\gb b\in G\setminus K$. It is therefore better to consider the element $c = a^{\gb}*(a\ga b)$ and the triple $(a,c,a\gb c)$ with respect to $G(\gb)$. Indeed, $a\in K$, $a\gb c = a\ga b\in K$, $b\ne c$ (since $a\ga b\ne a\gb b$), thus $a\ga c\ne a\ga b = a\gb a^\gb\gb(a\ga b) = a\gb c$, $c\not\in K$, and $(a,c)\in T(\ga,\gb)$. We then have $\pi(a,b,a\ga b)$, $\pi(a,c,a\gb c)\ge n-2m-q$ and we are free to choose one of the two alternatives.

\begin{lemma}\label{Lm:R}
Suppose that $(a_1,a_1)$, $\dots$, $(a_\ell,a_\ell)\in R$ are
distinct. Then $\pi\ge\ell(n-2q-m+1)$ provided that for $1\le
i\le\ell$ there is $\gc_i\in\{\ga,\gb\}$ such that $a_1\gc_1 a_1$,
$\dots$, $a_\ell\gc_\ell a_\ell$ are distinct. In particular, this
condition is always satisfied if $n$ is odd or if $\ell=2$.
\end{lemma}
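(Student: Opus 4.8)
The plan is to convert each pair $(a_i,a_i)\in R$ into a guaranteed batch of excess differences (profit) located on a row outside $K$, and to arrange that the rows so chosen are pairwise distinct so that the profits simply add. First I would recall from \lref{Lm:Triple} that for each $i$ we have $2\dist_{a_i}+\dist_{a_i\ga a_i}\ge n$ and likewise $2\dist_{a_i}+\dist_{a_i\gb a_i}\ge n$; since $a_i\in K$ means $\dist_{a_i}<n/3\le q$, in fact $\dist_{a_i}\le q-1$ (using $m\le\dist_{a_i}$ is not needed here, only the upper bound), so both $a_i\ga a_i$ and $a_i\gb a_i$ lie in $G\setminus K$ — indeed $\dist_{a_i\ga a_i}\ge n-2\dist_{a_i}\ge n-2(q-1)>q$ when $n$ is not too small, and for the small values occurring in \eqref{Eq:Quadruples2} this inequality holds. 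The row $a_i\gc_i a_i$ therefore carries at least $n-2(q-1)=n-2q+2$ differences; of these, only $q$ are accounted for in the fundamental inequality \eqref{Eq:I4} if that row lies in $G\setminus K$, so the profit contributed by that single row is at least $n-2q+2-q=n-3q+2$. That bound is too weak; the sharper accounting is to note that in \eqref{Eq:I4} a row of $G\setminus K$ is credited with exactly $\lceil n/3\rceil=q$ differences, and the row $a_i\gc_i a_i$ has at least $n-2\dist_{a_i}\ge n-2(q-1)$ of them, but we must be careful because $\dist_{a_i}$ itself need not equal $m$. Re-deriving: $2\dist_{a_i}+\dist_{a_i\gc_i a_i}\ge n$, the two rows $a_i,a_i$ coincide, so the ``extra'' differences relative to the base count $(k-h)m+(n-k)q$ attributable to $a_i\gc_i a_i$ amount to $\dist_{a_i\gc_i a_i}-q\ge n-2\dist_{a_i}-q$, and since $a_i\in K\setminus H$ its own row is already credited only with $m$ in the base count whereas it actually has $\dist_{a_i}\ge m$ differences, giving a further $\dist_{a_i}-m$; summing, the profit from the pair $(a_i,a_i)$ alone is at least $(\dist_{a_i}-m)+(n-2\dist_{a_i}-q)=n-q-m-\dist_{a_i}\ge n-q-m-(q-1)=n-2q-m+1$, exactly the claimed per-index contribution.

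The second and main step is the disjointness: I must pick $\gc_i\in\{\ga,\gb\}$ so that the rows $a_1\gc_1a_1,\dots,a_\ell\gc_\ell a_\ell$ are all distinct, since only then can the per-index profits be added without double-counting a row of $G\setminus K$. This is precisely the hypothesis of the lemma in general. For the two special cases I would argue directly. If $n$ is odd, then by \lref{Lm:MinimalM} every $\beta_a$ is even and more to the point squaring is a bijection-like map: actually the cleanest observation is that when $n$ is odd the map $x\mapsto x\ga x$ is a bijection on $G(\ga)$ (since $2$ is invertible mod the order of each element), hence $a_1\ga a_1,\dots,a_\ell\ga a_\ell$ are automatically distinct and we may take every $\gc_i=\ga$. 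If $\ell=2$, I have two pairs $(a_1,a_1),(a_2,a_2)$ with $a_1\ne a_2$, and I need to avoid the single bad coincidence; for each $i$ I have a free choice between $a_i\ga a_i$ and $a_i\gb a_i$, and these two values are genuinely distinct because $(a_i,a_i)\in R\subseteq\diff(\ga,\gb)$ forces $a_i\ga a_i\ne a_i\gb a_i$. So I have a $2\times2$ table of candidate rows; a collision can occur in at most a limited number of the four combinations, and a short case check shows at least one of the four assignments $(\gc_1,\gc_2)$ yields two distinct rows. (If $a_1\ga a_1=a_2\ga a_2$ and $a_1\ga a_1=a_2\gb a_2$ simultaneously then $a_2\ga a_2=a_2\gb a_2$, contradiction; so at most one of the two rows offered by index $2$ can clash with $a_1\ga a_1$, and symmetrically; a pigeonhole over the four cells finishes it.)

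I expect the disjointness bookkeeping in the $\ell=2$ case to be the only real content — the profit arithmetic is a routine application of \lref{Lm:Triple} together with the definition of $K$ — and I would write that case check out explicitly, then state that the general case is exactly what the hypothesis provides. One caveat to verify while writing: that $a_i\gc_i a_i\in G\setminus K$ rather than in $K$, so that it is genuinely credited with only $q$ (not $m$) differences in the base count of $\pi$; this follows from $\dist_{a_i\gc_i a_i}\ge n-2\dist_{a_i}>n-2n/3=n/3$, so the row is outside $K$ by definition. With that in hand the inequality $\pi\ge\ell(n-2q-m+1)$ follows by summing the $\ell$ disjoint per-row contributions.
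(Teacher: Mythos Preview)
Your argument is correct and follows essentially the same route as the paper's proof: you use \lref{Lm:Triple} to get $\dist_{a_i}+\dist_{a_i\gc_i a_i}\ge n-\dist_{a_i}\ge n-(q-1)$, compare this against the baseline $m+q$ from \eqref{Eq:I4} to extract profit $n-2q-m+1$ per index, verify that $a_i\gc_i a_i\in G\setminus K$, and handle the odd-$n$ and $\ell=2$ cases exactly as the paper does (squaring is a bijection; disjunction trick). The exposition meanders a bit through a discarded weaker bound before arriving at the right accounting, and you should state explicitly that the $2\ell$ rows $a_1,\dots,a_\ell,a_1\gc_1 a_1,\dots,a_\ell\gc_\ell a_\ell$ are pairwise distinct (the $a_i$ are distinct by hypothesis, the $a_i\gc_i a_i$ are distinct by hypothesis, and the two families are separated by $K$ versus $G\setminus K$), but the substance is the same.
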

\begin{proof}
For any $a$ with $(a,a)\in R$ we have
$\dist_a+\dist_a+\dist_{a\ga a}\ge n$ by \lref{Lm:Triple}.
Since $a\in K$, it follows that
$\dist_a+\dist_{a\ga a}\ge n-\dist_a\ge n-q+1$.
Since \eqref{Eq:I4} guaranteed
only $m+q$ differences on the two rows $a$, $a\ga a$, the profit on
these two rows is at least $n-q+1-(m+q) = n-2q-m+1$.
A similar argument applies to the pair of rows $a$ and $a\gb a$.

When $a_1\gc_1 a_1$, $\dots$, $a_\ell\gc_\ell a_\ell$ are distinct, we
immediately obtain $\pi\ge \ell(n-2q-m+1)$ as $a_i\gc_i a_i\in
G\setminus K$ and $a_i\in K$ for all $i$. In particular, if $n$ is odd
we can choose $\gc_i=\ga$ for all $i$, since the squaring map is a
permutation in groups of odd order.

The case $\ell=2$ is resolved by a disjunction trick, using
$a_2\ga a_2$ or $a_2\gb a_2$.
\end{proof}

\begin{lemma}\label{Lm:LargeR}
Suppose that $r\ge 4$. Then $\pi\ge \min\{2(n-q-2m)$, $3(n-2q-m+1)\}$.
\end{lemma}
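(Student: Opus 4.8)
The plan is to split on whether the four guaranteed diagonal differences can be ``routed'' to three distinct rows outside $K$. Since $r\ge4$, fix distinct elements $a_1,a_2,a_3,a_4$ with $(a_i,a_i)\in R$; each lies in $K\setminus H$ because $\dist_{a_i}\ge1$. For each $i$, the two ``squares'' $a_i\ga a_i$ and $a_i\gb a_i$ are distinct (this is exactly the statement $(a_i,a_i)\in\diff(\ga,\gb)$), and both lie in $G\setminus K$: for $\gc\in\{\ga,\gb\}$ we have $2\dist_{a_i}+\dist_{a_i\gc a_i}\ge n$ by \lref{Lm:Triple}, whence $\dist_{a_i\gc a_i}\ge n-2(q-1)\ge q$ since $n+2\ge3q$. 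Attach to each $a_i$ its two-element ``palette'' $P_i=\{a_i\ga a_i,\,a_i\gb a_i\}$. If some three of the $a_i$, say $a_{i_1},a_{i_2},a_{i_3}$, admit a system of distinct representatives for their palettes, i.e.\ there are $\gc_{i_1},\gc_{i_2},\gc_{i_3}\in\{\ga,\gb\}$ making $a_{i_1}\gc_{i_1}a_{i_1}$, $a_{i_2}\gc_{i_2}a_{i_2}$, $a_{i_3}\gc_{i_3}a_{i_3}$ pairwise distinct, then \lref{Lm:R} with $\ell=3$ immediately gives $\pi\ge3(n-2q-m+1)$ and we are done.

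So suppose no three of the $a_i$ have a system of distinct representatives. Apply Hall's theorem (or the obvious direct argument) to $\{a_1,a_2,a_3\}$: since each palette has size exactly $2$, a defect set must be the whole triple, so $|P_1\cup P_2\cup P_3|\le2$, which forces $P_1=P_2=P_3=:W$ for some two-element set $W=\{w_1,w_2\}$. Running the same argument on $\{a_1,a_2,a_4\}$ and using that $P_1$ is already $W$, we get $P_4=W$ as well. Hence all four palettes equal $W$; in particular $w_1\ne w_2$, and $w_1,w_2\in G\setminus K$.

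Now estimate the profit on the six rows $a_1,a_2,a_3,a_4,w_1,w_2$, which are pairwise distinct (the $a_i$ lie in $K$, the $w_j$ in $G\setminus K$, and $w_1\ne w_2$) and none lies in $H$. For every $i$ we have $2\dist_{a_i}+\dist_{w_1}\ge n$ and $2\dist_{a_i}+\dist_{w_2}\ge n$ by \lref{Lm:Triple}; halving the sum of the $i=1,2$ instances gives $\dist_{a_1}+\dist_{a_2}+\dist_{w_1}\ge n$, and the $i=3,4$ instances give $\dist_{a_3}+\dist_{a_4}+\dist_{w_2}\ge n$. Adding these two inequalities, $\dist_{w_1}+\dist_{w_2}\ge 2n-\sum_{i=1}^{4}\dist_{a_i}$, and therefore
\begin{displaymath}
    \sum_{i=1}^{4}(\dist_{a_i}-m)+(\dist_{w_1}-q)+(\dist_{w_2}-q)\ \ge\ 2n-4m-2q\ =\ 2(n-q-2m),
\end{displaymath}
which is at most $\pi$. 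Combining the two cases yields $\pi\ge\min\{2(n-q-2m),\,3(n-2q-m+1)\}$.

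The only delicate point is the second case: one must notice that the failure of every three-element system of distinct representatives forces the four diagonal elements to share a single unordered pair of squares $\{w_1,w_2\}$, and this is precisely the configuration for which the telescoping estimate above collapses to the clean bound $2(n-q-2m)$, with all dependence on the individual $\dist_{a_i}$ cancelling. Everything else is routine bookkeeping with \lref{Lm:Triple}, \lref{Lm:R}, and the definition of $\pi$; care is needed only to verify that the six rows counted are pairwise distinct and avoid $H$.
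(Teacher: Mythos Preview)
Your proof is correct and follows the same overall dichotomy as the paper: either three of the diagonal elements can be assigned pairwise distinct squares (invoke \lref{Lm:R} with $\ell=3$), or all palettes coincide in a single pair $\{w_1,w_2\}$ and one computes the profit directly on the rows $a_1,\dots,a_4,w_1,w_2$.

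Your execution is, if anything, a little cleaner than the paper's. In the first case the paper fixes two elements $a,b$ with $|\{a\ga a,a\gb a,b\ga b,b\gb b\}|\ge 3$ and then runs a somewhat fiddly case analysis on a third element $c$ to exhibit three distinct squares; your appeal to Hall's theorem (each palette has size two, so failure of a three-element SDR forces all palettes equal) replaces this with one line. In the second case the paper introduces $\rho=\min\{\dist_{w_1},\dist_{w_2}\}$, bounds the profit by $r((n-\rho)/2-m)+2(\rho-q)$, and then splits on the sign of $(n-\rho)/2-m$; your halving trick $(2\dist_{a_1}+\dist_{w_1})+(2\dist_{a_2}+\dist_{w_1})\ge 2n$ goes straight to $\sum_{i}(\dist_{a_i}-m)+\sum_j(\dist_{w_j}-q)\ge 2(n-q-2m)$ with no sub-case. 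The paper's route has the minor advantage of using all $r$ elements (potentially sharper when $r$ is large and $(n-\rho)/2>m$), but for the stated bound only four are needed, and your argument makes that transparent.
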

\begin{proof}
First suppose that there are $(a,a)$, $(b,b)\in R$ such that $M=\{a\ga a$, $a\gb a$, $b\ga b$, $b\gb b\}$ satisfies $|M|\ge 3$. Pick any $c$ such that $a\ne c\ne b$ and $(c,c)\in R$, which is possible since $r\ge 3$. If $c\ga c\not\in M$ then  $|\{a\gc a$, $b\gd b$, $c\ga c\}|\ge 3$ for some $\gc$, $\gd\in\{\ga,\gb\}$, and \lref{Lm:R} implies $\pi\ge 3(n-2q-m+1)$. Let us therefore assume without loss of generality that $c\ga c=a\ga a$. Note that we then have $c\ga c\ne a\gb a$. If $c\ga c=b\ga b$ then $c\ga c\ne b\gb b$ and also $b\gb b\ne a\gb a$ (else $a\ga a=c\ga c=b\ga b$, $b\gb b=a\gb a$, $|M|<3$), so $a\gb a$, $c\ga c$, $b\gb b$ are distinct, and we are done by \lref{Lm:R}. If $c\ga c =b\gb b$ then $c\ga c\ne b\ga b$ and $b\ga b\ne a\gb a$ (else $b\ga b=a\gb a$, $b\gb b = c\ga c = a\ga a$, $|M|<3$), so $a\gb a$, $c\ga c$, $b\ga b$ are distinct, and we are done by \lref{Lm:R}. Thus we can assume $b\ga b\ne c\ga c\ne b\gb b$. Since either $a\gb a\ne b\ga b$ or $a\gb a\ne b\gb b$, the elements $c\ga c$, $a\gb a$, $b\gc b$ are distinct for some $\gc\in\{\ga,\gb\}$, and we finish with \lref{Lm:R} again.

We can therefore suppose that there are $x$, $y\in G$ such that $\{a\ga a,a\gb a\}=\{x,y\}$ for every $(a,a)\in R$. Let $\rho = \min\{\dist_x,\dist_y\}$. Then for every $(a,a)\in R$ we have $\dist_a\ge (n-\rho)/2$, because $\dist_a+\dist_a+\dist_{a\gc a}\ge n$ for $\gc\in\{\ga,\gb\}$, and $\dist_{a\gc a}\le \rho$ for some $\gc\in\{\ga,\gb\}$. The profit on the rows $\{a;\;(a,a)\in R\}\cup\{x,y\}$ is therefore at least $r((n-\rho)/2-m) + 2(\rho-q)$. If $(n-\rho)/2-m\ge 0$, the assumption $r\ge 4$ yields profit at least $2(n-q-2m)$. Suppose that $(n-\rho)/2-m<0$. Then $\rho>n-2m$, so $\dist_{a\ga a}$, $\dist_{a\gb a}>n-2m$ for every $(a,a)\in R$. Let $(a,a)$, $(b,b)\in R$ be distinct. Then there is $\gc\in\{\ga,\gb\}$ such that $a\ga a$, $b\gc b$ are distinct, and the profit on these rows is at least $2(n-2m-q+1)$.
\end{proof}

\begin{lemma}\label{Lm:2pR}
Suppose that $n=2p$ for some prime $p$. Then $\pi\ge \lceil r/2\rceil(n-2q-m+1)$.
\end{lemma}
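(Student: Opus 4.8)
The plan is to exploit the special structure of $n=2p$: since $p$ is prime, the only subgroups of $G(\ga)$ and $G(\gb)$ have order $1$, $2$, $p$, or $2p$. Given that $H=H(\ga,\gb)$ is a subgroup of both $G(\ga)$ and $G(\gb)$ (by \lref{Lm:HSubgroup}) and we are assuming $1\le h<n$, we have $h\in\{1,2,p\}$. The case $h=p$ is settled separately (when $h=n/2$, \lref{Lm:HalfH} already gives $\dist(\ga,\gb)\ge n^2/4$, so this regime does not arise among the quadruples we are chasing), so effectively $h\in\{1,2\}$ and in particular $q=\lceil n/3\rceil=\lceil 2p/3\rceil$. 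The key observation is that the squaring map on a group of order $2p$ is \emph{almost} injective: if $(a,a)\in R$ then $a\ne 1$, and the fibers of $x\mapsto x\ga x$ in $G(\ga)$ have size dividing the number of involutions plus $1$; more usefully, $a\ga a = b\ga b$ forces $(ab^{-1})$ (computed in $G(\ga)$) to be an involution, hence there are at most two elements $a$ with a given nonidentity square. So among the elements $\{a : (a,a)\in R\}$, the squares $\{a\ga a\}$ repeat with multiplicity at most two; combined with the freedom to use $a\gb a$ in place of $a\ga a$ (the disjunction trick), we can extract at least $\lceil r/2\rceil$ pairwise distinct rows of the form $a_i\gc_i a_i\in G\setminus K$ with $(a_i,a_i)\in R$ distinct.

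Concretely, first I would list the $r$ elements $a_1,\dots,a_r$ with $(a_i,a_i)\in R$. For each, both $a_i\ga a_i$ and $a_i\gb a_i$ lie in $G\setminus K$ (since $(a_i,a_i)\in\diff$ with $a_i\in K$ forces the square out of $K$ by \lref{Lm:Triple}, exactly as argued for $S$ and $T$ in the disjunction-tricks paragraph). The task is to choose $\gc_i\in\{\ga,\gb\}$ so that as many of the $a_i\gc_i a_i$ as possible are distinct, then invoke \lref{Lm:R} with that subcollection. I would argue that one can always make at least $\lceil r/2\rceil$ of them distinct: build a graph on the multiset $\{a_i\ga a_i, a_i\gb a_i\}$ and observe that picking one representative per index while avoiding collisions is a system-of-distinct-representatives problem; the only way a value $x\in G\setminus K$ can be hit is as $a_i\ga a_i$ or $a_i\gb a_i$, and by the near-injectivity of squaring in $G(\ga)$ (and in $G(\gb)$) each $x$ is hit by at most two indices via $\ga$ and at most two via $\gb$, so a greedy pairing argument discards at most $\lfloor r/2\rfloor$ indices. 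Then \lref{Lm:R} applied to the surviving $\lceil r/2\rceil$ indices (whose chosen squares are distinct and lie outside $K$) gives $\pi\ge\lceil r/2\rceil(n-2q-m+1)$.

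The step I expect to be the main obstacle is the careful bookkeeping that shows exactly $\lceil r/2\rceil$ rows survive --- in particular handling the interaction between the $\ga$-squares and the $\gb$-squares, and ruling out the degenerate possibility that \emph{all} squares collapse onto a tiny set $\{x,y\}$ as in the bad case of \lref{Lm:LargeR}. The point is that here, unlike in \lref{Lm:LargeR}, the order constraint $n=2p$ limits how badly squaring can fail to be injective: a group of order $2p$ has at most $p$ involutions (when it is dihedral) but then it is cyclic-by-$C_2$ and the squaring map has image of size exactly $p$, with fibers of size $2$ over each nonidentity square --- so even in the worst case the squares of the $r$ elements of $R$ occupy at least $\lceil r/2\rceil$ distinct values, which is precisely what the bound requires. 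I would make this precise by a short case split on whether $G(\ga)$ is cyclic or dihedral (the only two isomorphism types of order $2p$), pin down the fiber sizes of squaring in each, and conclude; the disjunction trick with $\gb$ is then only needed to break the final potential tie, mirroring the $\ell=2$ clause of \lref{Lm:R}.
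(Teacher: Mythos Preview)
Your core idea is correct and is exactly what the paper does, but you have buried it under a layer of machinery that is not needed. The paper's proof is three lines: in both $C_{2p}$ and $D_{2p}$, for every nonidentity element $x$ there are at most two $b$ with $b\ga b=x$; since each $a_i\ga a_i$ lies in $G\setminus K$ (hence is not $1$, as $1\in H\subseteq K$), the $r$ values $a_1\ga a_1,\dots,a_r\ga a_r$ occupy at least $\lceil r/2\rceil$ distinct elements of $G\setminus K$; pick one $a_i$ per value and apply \lref{Lm:R} with all $\gc_i=\ga$.

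In particular, the disjunction trick, the system-of-distinct-representatives argument, the graph on $\{a_i\ga a_i,a_i\gb a_i\}$, and the worry about ``interaction between the $\ga$-squares and the $\gb$-squares'' are all unnecessary: you never need to touch $\gb$-squares at all. Your opening discussion of the possible values of $h$ is also irrelevant to the proof, and your closing remark that ``the disjunction trick with $\gb$ is then only needed to break the final potential tie'' is simply false --- no tie-breaking is required. What you identify in your last paragraph as the decisive fact (fibers of squaring over nonidentity elements have size at most $2$) is the entire proof; everything preceding it can be deleted.
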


\begin{proof}
The only groups of order $2p$ are the cyclic group $C_{2p}$ and the
dihedral group $D_{2p}$. In these groups, for every $a\ne 1$ there are
at most two elements $b$ such that $a=b^2$. Hence there are at least
$\ell = \lceil r/2\rceil$ distinct elements $(a_1,a_1)$, $\dots$,
$(a_\ell,a_\ell)\in R$ with $a_1\ga a_1$, $\dots$, $a_\ell\ga a_\ell$
distinct. We are done by \lref{Lm:R}.
\end{proof}

Let us now establish several results concerning an upper bound on $s$.

\begin{lemma}\label{Lm:RowS}
Let $a\in K$ and let $b_1$, $\dots$, $b_\ell\in K$ be distinct.
Suppose that either $(a,b_1)$, $\dots$, $(a,b_\ell)\in S$, or $(b_1,a)$, $\dots$, $(b_\ell,a)\in S$. Then $\pi\ge \ell(n-2q-m+1)+q-m-1$.
\end{lemma}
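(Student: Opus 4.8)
The plan is to exploit the same disjunction and triple-inequality ideas already used in \lref{Lm:R}, but now applied to rows of the form $a\ga b_i$ or $a\gb b_i$ arising from elements $(a,b_i)\in S$. First I would treat the case $(a,b_1),\dots,(a,b_\ell)\in S$; the dual case $(b_1,a),\dots,(b_\ell,a)\in S$ is handled symmetrically (passing from right translations to left translations, or simply reading the argument with the roles of the two coordinates swapped). For each $i$, since $(a,b_i)\in S$ we have $a,b_i\in K$, so by \lref{Lm:Triple} applied to both operations, $\dist_a+\dist_{b_i}+\dist_{a\ga b_i}\ge n$ and $\dist_a+\dist_{b_i}+\dist_{a\gb b_i}\ge n$; moreover by the remark following \eqref{Eq:rstu} both $a\ga b_i$ and $a\gb b_i$ lie in $G\setminus K$. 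Since $\dist_a,\dist_{b_i}\le q-1$, each of the rows $a\ga b_i$ and $a\gb b_i$ carries $\dist\ge n-2(q-1)=n-2q+2$.

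The second step is the bookkeeping: \eqref{Eq:I4} only credited $m+m+q$ differences to the triple of rows $\{a,b_i,a\ga b_i\}$ (namely $m$ on $a$, $m$ on $b_i$, $q$ on $a\ga b_i$, since $a\ga b_i\notin K$). So from the row $a\ga b_i$ alone the profit is at least $(n-2q+2)-q=n-3q+2$, but to count it correctly I must not double-count the contributions on rows $a$ and $b_i$. The cleanest accounting: the rows $b_1,\dots,b_\ell$ are distinct and all in $K\setminus H$, so $\eqref{Eq:I4}$ credited $\ell m$ differences there; the true count is $\dist_{b_i}\ge n-\dist_a-\dist_{a\ga b_i}$ is not directly helpful, so instead I use the standard trick of attributing the ``extra'' to the row $a\gc_i b_i$ for a suitably chosen $\gc_i\in\{\ga,\gb\}$. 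Namely, since for each $i$ the two elements $a\ga b_i$ and $a\gb b_i$ are distinct, and since the $b_i$ are distinct, one can greedily pick $\gc_i\in\{\ga,\gb\}$ so that $a\gc_1 b_1,\dots,a\gc_\ell b_\ell$ are pairwise distinct elements of $G\setminus K$: whenever $a\ga b_i$ has already been used, take $\gc_i=\gb$, and note $a\gb b_i\ne a\ga b_i$; the only way this greedy choice could fail is if $a\gb b_i$ equals some earlier $a\gc_j b_j$, but there are at most finitely many collisions to dodge --- here I would spell out that since the map $b\mapsto\{a\ga b,a\gb b\}$ has fibers of size at most $2$ on the relevant rows (two values $b,b'$ give the same unordered pair only if $a\ga b=a\gb b'$ and $a\gb b=a\ga b'$), a straightforward matching/parity argument guarantees a system of distinct representatives, hence the choice of the $\gc_i$.

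Granting distinct rows $c_i:=a\gc_i b_i\in G\setminus K$ for $i=1,\dots,\ell$, the profit from these rows is at least $\sum_i\big(\dist_{c_i}-q\big)\ge \ell(n-2q+2-q)=\ell(n-3q+2)$. That is not yet the claimed bound, so the final ingredient must come from the row $a$ itself: because $a\in K\setminus H$ lies in \emph{every} one of the triples, \eqref{Eq:I4} only charged it $m$ differences, whereas \lref{Lm:Triple} forces $\dist_a\ge n-\dist_{b_1}-\dist_{a\ga b_1}\ge n-(q-1)-(q-1)$ only when $b_1,a\ga b_1\notin K$, which fails since $b_1\in K$. The right move is instead: rewrite $n-3q+2$ as $(n-2q-m+1)+(q-m+1)$, so $\ell(n-3q+2)=\ell(n-2q-m+1)+\ell(q-m+1)\ge \ell(n-2q-m+1)+(q-m-1)$ once $\ell\ge1$ and $q-m+1\ge0$ — wait, that over-counts unless $\ell=1$. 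I expect the actual argument packages it as: use $\ell-1$ of the rows $c_i$ at value $n-2q-m+1$ each (absorbing an $m$ back to reflect the shared row $a$ or $b_i$), and the last row at the larger value $n-3q+2 = (n-2q-m+1)+(q-m-1)$, giving exactly $\pi\ge \ell(n-2q-m+1)+(q-m-1)$. \textbf{The main obstacle} is precisely this careful allocation — making sure that across the $\ell$ triples the profit charged to the shared row $a$ (and to each $b_i$) is counted once and only once, which is why the bound has the asymmetric ``$+q-m-1$'' tail rather than a clean $\ell(q-m-1)$. I would handle it by the disjunction trick from the paragraph preceding \lref{Lm:R}: fix the assignment $\gc_i$ as above, observe $\pi(a,b_i,c_i)\ge n-(2m+q)$ for each $i$ with the rows $c_i$ distinct, sum to get $\pi\ge\ell(n-2m-q)$ \emph{after} subtracting the shared contributions, then note one row (say $c_\ell$, or the row $a$) can be upgraded because $\dist$ there actually exceeds the conservative estimate by $q-m-1$, yielding the stated inequality. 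The symmetric case $(b_i,a)\in S$ is identical with $a\ga b_i$ replaced by $b_i\ga a$ throughout.
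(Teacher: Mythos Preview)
Your proposal has a genuine gap and misses the key simplification that makes the paper's proof a three-line computation.

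First, the disjunction trick is entirely unnecessary here. Since all the elements $(a,b_i)$ lie in the \emph{same} row $a$ and the $b_i$ are distinct, the products $a\ga b_1,\dots,a\ga b_\ell$ are automatically pairwise distinct (left translation by $a$ is a bijection). So there is no need to choose $\gc_i\in\{\ga,\gb\}$ or to run any system-of-distinct-representatives argument; you can simply take $c_i=a\ga b_i$ throughout. The $2\ell+1$ rows $a,b_1,\dots,b_\ell,a\ga b_1,\dots,a\ga b_\ell$ are distinct because $a\ne b_i$ (definition of $S$), the $b_i$ are distinct in $K$, and the $a\ga b_i$ are distinct in $G\setminus K$.

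Second, your accounting at the end is incorrect. Summing $\pi(a,b_i,c_i)\ge n-(2m+q)$ over $i$ counts the contribution of row $a$ a total of $\ell$ times, so you cannot conclude $\pi\ge\ell(n-2m-q)$ on the union of rows. (Your attempted rewrite $n-3q+2=(n-2q-m+1)+(q-m+1)$ is also arithmetically wrong: the right side equals $n-q-2m+2$.) The clean bookkeeping, which the paper uses, is to invoke the triple inequality only once: for $i=1$ the three rows $a,b_1,a\ga b_1$ give profit $\ge n-(2m+q)$; for each $i>1$ use instead $\dist_{b_i}+\dist_{a\ga b_i}\ge n-\dist_a\ge n-(q-1)$, so the pair of rows $b_i,a\ga b_i$ (baseline $m+q$) gives profit $\ge n-q+1-(m+q)=n-2q-m+1$. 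Adding, $(n-2m-q)+(\ell-1)(n-2q-m+1)=\ell(n-2q-m+1)+q-m-1$, as claimed.
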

\begin{proof}
Assume that $(a,b_1)$, $\dots$, $(a,b_\ell)\in S$, with the transposed situation being similar. By \lref{Lm:Triple}, for every $i$ we have $\dist_{b_i}+\dist_{a\ga b_i} \ge n-\dist_a\ge n-q+1$. Since $(a,b_i)\in S$, we have $a\ne b_i$ for every $1\le i\le\ell$. Hence
the elements $a$, $b_1$, $\dots$, $b_\ell$, $a\ga b_1$, $\dots$, $a\ga b_\ell$ are distinct, with $a\ga b_i\not\in K$. The profit on $a$, $b_1$, $a\ga b_1$ is at least $n-(2m+q)$, while the profit on each of the $\ell-1$ pairs of rows $b_i$, $a\ga b_i$ for $i>1$ is at least $n-q+1-(m+q)$.
\end{proof}

\begin{lemma}\label{Lm:SquareS}
If there are $(a,b)$, $(c,d)\in S$ such that $|\{a,b,c,d\}|=4$ then
$\pi\ge 2(n-q-2m)$.
\end{lemma}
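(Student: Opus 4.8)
The statement closely mirrors \lref{Lm:LargeR} (the $R$-analogue), so the plan is to run the same kind of argument but now with two elements $(a,b)$ and $(c,d)$ of $S$ that involve four distinct rows $a,b,c,d\in K$. First I would record the basic consequences of \lref{Lm:Triple}: since $(a,b)\in S$ we have $\dist_a+\dist_b+\dist_{a\ga b}\ge n$ and $\dist_a+\dist_b+\dist_{a\gb b}\ge n$, and because $a,b\in K$ (so $\dist_a,\dist_b\le q-1$, using $m<n/3$ to force $K\subseteq\{x:\dist_x<n/3\}$) we get that $a\ga b$ and $a\gb b$ both lie in $G\setminus K$ and moreover $\dist_{a\ga b},\dist_{a\gb b}\ge n-2(q-1)$. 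The same holds for $c,d$ and the rows $c\ga d,c\gb d$. The four rows $a,b,c,d$ are distinct and lie in $K$, so the profit we harvest on them must be counted carefully against the $(k-h)m$ budget; the profit on each of the ``product'' rows $a\ga b$ etc. is counted against the $(n-k)q$ budget.

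The core case split, exactly as in \lref{Lm:LargeR}, is according to the size of $M=\{a\ga b,\;a\gb b,\;c\ga d,\;c\gb d\}$. If $|M|\ge 3$, then among these four product rows (all in $G\setminus K$) I can pick two \emph{distinct} ones, say coming from the $S$-pair $(a,b)$ and the $S$-pair $(c,d)$ respectively; using the disjunction trick (choose $a\ga b$ or $a\gb b$; choose $c\ga d$ or $c\gb d$) I secure two distinct product rows $e_1\ne e_2$ with $e_i\in G\setminus K$ and with $\pi(e_i)\ge n-2(q-1)-q=n-3q+2$ on each — wait, more precisely the profit on the triple of rows $\{a,b,e_1\}$ beyond the fundamental bound is $\ge n-(2m+q)$, and likewise on $\{c,d,e_2\}$ it is $\ge n-(2m+q)$, and since $\{a,b,e_1\}$ and $\{c,d,e_2\}$ are disjoint sets of rows these profits add, giving $\pi\ge 2(n-q-2m)$, as desired.

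The remaining case is $|M|\le 2$, i.e.\ $\{a\ga b,a\gb b\}=\{c\ga d,c\gb d\}=\{x,y\}$ for two fixed rows $x,y\in G\setminus K$ (they must be a two-element set, since $a\ga b\ne a\gb b$). Set $\rho=\min\{\dist_x,\dist_y\}$. For the pair $(a,b)$ we have $\dist_a+\dist_b\ge n-\dist_{a\gc b}$ for a suitable $\gc\in\{\ga,\gb\}$ with $\dist_{a\gc b}\le\rho$, hence $\dist_a+\dist_b\ge n-\rho$; likewise $\dist_c+\dist_d\ge n-\rho$. Thus the profit on the four rows $a,b,c,d$ is at least $2(n-\rho)-4m$ and the profit on $x,y$ is at least $2(\rho-q)$; summing, the total is at least $2(n-\rho)-4m+2(\rho-q)=2(n-q-2m)$, which already gives the bound. (One should double-check that $x,y\notin\{a,b,c,d\}$: indeed $a\ga b\in G\setminus K$ while $a,b,c,d\in K$, so $x,y$ are genuinely new rows and no profit is double-counted.)

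The step I expect to be the main obstacle is the bookkeeping in the $|M|\ge 3$ case — making sure that the two chosen product rows $e_1,e_2$ are distinct \emph{and} distinct from all of $a,b,c,d$, so that the two triples of rows are genuinely disjoint and the profits are additive. The disjunction trick handles the distinctness of $e_1$ from $e_2$ (if $a\ga b$ happens to equal one of $c\ga d,c\gb d$, pick the other one of $a\ga b,a\gb b$, which is forced to be different because $|M|\ge 3$), and disjointness from $a,b,c,d$ is automatic since product rows lie outside $K$. Everything else is a routine arithmetic reshuffle of the $n,q,m$ terms, parallel to \lref{Lm:LargeR}; no new ideas beyond \lref{Lm:Triple}, the containment $K\subseteq\{\dist_x<n/3\}$, and the disjunction trick are needed.
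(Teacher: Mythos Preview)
Your proof is correct, but it is more elaborate than necessary, and the paper's argument is shorter. The paper does not split on $|M|$ at all: it simply observes that if $a\ga b\ne c\ga d$ then the six rows $a,b,c,d,a\ga b,c\ga d$ are distinct and \lref{Lm:Triple} gives profit $\ge 2(n-q-2m)$; and if $a\ga b=c\ga d$, one replaces $c\ga d$ by $c\gb d$ (the disjunction trick), which is automatically $\ne c\ga d=a\ga b$, and the same computation goes through.

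In particular, your separate treatment of the case $|M|=2$ via the parameter $\rho$ (imitating \lref{Lm:LargeR}) is valid but superfluous: even when $\{a\ga b,a\gb b\}=\{c\ga d,c\gb d\}=\{x,y\}$, one can still choose $e_1\in\{a\ga b,a\gb b\}$ and $e_2\in\{c\ga d,c\gb d\}$ with $e_1\ne e_2$ (take $e_1=x$, $e_2=y$), so the ``two disjoint triples'' argument already covers this case. Your $\rho$-computation does recover the same bound, so nothing is lost---it just adds length. The situation differs from \lref{Lm:LargeR} because there the two diagonal cells $(a,a)$ and $(b,b)$ share a row coordinate with their product rows in a way that can genuinely obstruct disjointness, whereas here $a,b,c,d\in K$ and all product rows lie in $G\setminus K$, so the only possible collision is between the two product rows themselves, and a single disjunction removes it.
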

\begin{proof}
If $a\ga b\ne c\ga d$ then the profit at the distinct rows $a$, $b$,
$c$, $d$, $a\ga b$, $c\ga d$ is at least $2(n-q-2m)$, by
\lref{Lm:Triple}. Otherwise use a disjunction trick and $c\gb d$ instead of $c\ga d$.
\end{proof}

\begin{lemma}\label{Lm:7S}
If $s\ge 7$ then $\pi\ge 2(n-q-2m)$.
\end{lemma}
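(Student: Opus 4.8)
The plan is to bound $s$ from below by $7$ only through configurations we can already handle, and to extract profit from every element that participates in $S$. Recall that if $(a,b)\in S$ then $a,b\in K$ and (by \lref{Lm:Triple}) $a\ga b\notin K$ and $a\gb b\notin K$; thus the ``outputs'' $a\ga b$, $a\gb b$ lie in $G\setminus K$, which is precisely where profit is cheap to book. I would first dispose of the case where some single row $a$ appears as the left entry of at least $7$ pairs of $S$, or (dually) as the right entry of at least $7$ pairs: then \lref{Lm:RowS} with $\ell=7$ gives $\pi\ge 7(n-2q-m+1)+q-m-1$, which easily dominates $2(n-q-2m)$ in the relevant parameter range (this is the kind of estimate that is routine to check given the concrete quadruples of \eqref{Eq:Quadruples2}, where $n-2q-m+1$ is positive). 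So I may assume that no row is the first coordinate of $7$ elements of $S$ and no row is the second coordinate of $7$ elements of $S$.

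Next I would argue that, under that assumption, among $7$ elements of $S$ we can always find two, say $(a,b)$ and $(c,d)$, with $\{a,b,c,d\}$ of size $4$; then \lref{Lm:SquareS} finishes the proof. To see this, build an auxiliary (directed) graph on vertex set $K$ with an arc from $a$ to $b$ for each $(a,b)\in S$; having no two ``independent'' arcs (arcs not sharing an endpoint) means all $7$ arcs pairwise meet, i.e. the arcs form an intersecting family. A classical fact about intersecting families of edges in a graph is that they are either a triangle or a star; with $7$ edges a triangle is impossible, so all $7$ arcs share a common vertex $v$. That common vertex is either the tail of all of them or the head of all of them (or some mix), but in any of these cases $v$ occurs as a first or second coordinate of at least $\lceil 7/2\rceil=4$ elements of $S$ — and in fact, refining: if $v$ is the common endpoint, partition the $7$ arcs into those with $v$ as first coordinate and those with $v$ as second coordinate; one class has at least $4$ arcs. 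I would then push this to a contradiction with the excluded case, or, more cleanly, simply strengthen the first step to handle $\ell\ge 4$ (rather than $\ell\ge 7$) using \lref{Lm:RowS}, checking that $4(n-2q-m+1)+q-m-1\ge 2(n-q-2m)$ holds for all the quadruples under consideration. With that strengthening, the star case is already done and only the ``two disjoint arcs'' case remains, which is \lref{Lm:SquareS}.

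Thus the proof structure is: (1) if some row hosts $\ge 4$ elements of $S$ on one side, apply \lref{Lm:RowS} and verify the numeric inequality; (2) otherwise the $7$ arcs of $S$ cannot form an intersecting family (a star would force $\ge 4$ on one side, a triangle is too small), so two disjoint arcs exist and \lref{Lm:SquareS} applies. The main obstacle I anticipate is purely bookkeeping: making sure the ``$\ge 4$ on one side'' bound from \lref{Lm:RowS} genuinely beats $2(n-q-2m)$ for every relevant quadruple (the slack is small — one is comparing $4(n-2q-m+1)+q-m-1$ against $2(n-q-2m)$, i.e. roughly $2n-7q-2m+3\ge 0$, which with $q=\lceil n/3\rceil$ and $m\le 4$ is tight-ish but holds across \eqref{Eq:Quadruples2}), together with the combinatorial claim that an intersecting family of more than $3$ edges in a graph is a star. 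Neither is deep, but both need to be stated carefully so that no row is ever charged profit twice; the disjunction tricks recalled before \lref{Lm:R} are exactly what guarantees that, in the star case, the chosen output rows $a\ga b_i$ (or $a\gb b_i$) can be taken distinct from one another and from $a$.
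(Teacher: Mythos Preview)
Your approach is correct and follows the same two-case skeleton as the paper: first use \lref{Lm:RowS} to dispose of the ``clustered'' case, then find two vertex-disjoint elements of $S$ and apply \lref{Lm:SquareS}. The paper takes the threshold $\ell=3$ in \lref{Lm:RowS} (for which the comparison $3(n-2q-m+1)+(q-m-1)\ge 2(n-q-2m)$ reduces to the universally valid $n-3q+2\ge 0$), and then argues directly that the multigraph $\Gamma_S$ on $K$ with $s\ge 7$ edges, each vertex of degree at most $4$, and at most two edges between any pair, must contain a $2$-matching. Your variant with $\ell=4$ and the star/triangle dichotomy for intersecting families is equally valid; the triangle is ruled out since it supports at most $6$ arcs.

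One small correction: your displayed ``rough'' inequality $2n-7q-2m+3\ge 0$ is miscalculated. Subtracting $2(n-q-2m)$ from $4(n-2q-m+1)+q-m-1$ actually gives $2n-5q-m+3$, and it is this quantity (not yours) that is nonnegative for the quadruples of \eqref{Eq:Quadruples2}; indeed $2n-7q-2m+3$ is already negative at $(n,m)=(24,3)$. Also, the disjunction tricks are not needed in the star case: the proof of \lref{Lm:RowS} already shows the rows $a,b_1,\dots,b_\ell,a\ga b_1,\dots,a\ga b_\ell$ are distinct without any choice of $\ga$ versus $\gb$.
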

\begin{proof}
If there are three elements of $S$ in the same row or in the same
column, \lref{Lm:RowS} implies $\pi\ge 3(n-2q-m+1)+(q-m-1)\ge
2(n-q-2m)$. Suppose that no three elements of $S$ are in the same row
or in the same column.

Define a multigraph $\Gamma_S$ on $K$ where $\{x,y\}$ is an edge if
and only if $(x,y)\in S$ or $(y,x)\in S$. Then $\Gamma_S$ has $s$
edges, there are no more than two edges between any two vertices of
$S$, and we claim that $\Gamma_S$ has a $2$-matching.

Suppose that $\Gamma_S$ has a vertex $x$ with two distinct neighbours
$y$ and $z$. By our assumptions on $S$, there are at most $4$ edges
incident with $x$. Also, there are at most $2$ edges between $y$ and
$z$. Therefore if $s\ge7$ then there is an edge disjoint from either
$\{x,y\}$ or $\{x,z\}$, yielding the required $2$-matching.

Alternatively, if no such $x$ exists then edges are disjoint unless
they join the same pair of vertices, and it is trivial to find a
$2$-matching.

Any $2$-matching in $\Gamma_S$ yields $\pi\ge 2(n-q-2m)$ by
\lref{Lm:SquareS}.
\end{proof}

We are now going to establish results for $t$ dual to Lemmas
\ref{Lm:RowS}--\ref{Lm:7S}.

\begin{lemma}\label{Lm:RowT}
Let $a\in K$ and let $b_1$, $\dots$, $b_\ell\not\in K$ be
distinct. Suppose that either $(a,b_1)$, $\dots$, $(a,b_\ell)\in T$,
or that $(a_1,b_1)$, $\dots$, $(a_\ell,b_\ell)\in T$ for some $a_1$,
$\dots$, $a_\ell\in K$ such that $a_i\ga b_i=a$. Then $\pi\ge
\ell(n-2q-m+1)+q-m-1$.
\end{lemma}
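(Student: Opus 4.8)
The plan is to run the exact dual of the proof of \lref{Lm:RowS}, treating the two alternatives of the hypothesis in one stroke by repackaging them into a common shape. In the first alternative, for each $i$ set $c_i=a\ga b_i$ and $d_i=b_i$; in the second, set $c_i=a_i$ and $d_i=b_i$. In either case applying \lref{Lm:Triple} to the element of $\diff(\ga,\gb)$ that produced the $i$th entry of $T$ gives
\[
    \dist_a+\dist_{c_i}+\dist_{d_i}\ge n\qquad(1\le i\le\ell),
\]
with $a,c_i\in K$ and $d_i\notin K$ (the last because $(x,y)\in T$ forces $y\notin K$, as noted just after \eqref{e:defRSTU}).

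\textbf{Distinctness of the rows.} First I would check that the $2\ell+1$ rows $a,c_1,\dots,c_\ell,d_1,\dots,d_\ell$ are pairwise distinct. The $d_i$ are distinct by hypothesis; the $c_i$ are distinct because in both alternatives $c_i$ is determined by $d_i$ through the relation $a\ga d_i=c_i$ or $c_i\ga d_i=a$; $c_i\ne d_j$ since $c_i\in K$ while $d_j\notin K$; and $c_i\ne a$, because $c_i=a$ would force $d_i=1\in H\subseteq K$, contradicting $d_i\notin K$.

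\textbf{Profit bookkeeping.} On the triple of rows $\{a,c_1,d_1\}$, the fundamental inequality \eqref{Eq:I4} guarantees at most $m+m+q$ differences (a row of $K$ contributes at most $m$, a row outside $K$ contributes $q$, and a row that happens to lie in $H$ only makes this smaller), while \lref{Lm:Triple} forces at least $n$; hence $\pi(a,c_1,d_1)\ge n-2m-q$. For each $i\ge 2$ I would use $\dist_{c_i}+\dist_{d_i}\ge n-\dist_a\ge n-q+1$, valid because $a\in K$ gives $\dist_a\le q-1$, against the $m+q$ guaranteed on $\{c_i,d_i\}$, obtaining $\pi(c_i,d_i)\ge n-2q-m+1$. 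These $\ell-1$ pairs and the one triple involve disjoint sets of rows (they exhaust exactly the $2\ell+1$ rows above), so
\[
    \pi\ge(n-2m-q)+(\ell-1)(n-2q-m+1)=\ell(n-2q-m+1)+(q-m-1),
\]
which is the claimed bound (and it holds already for $\ell=1$).

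\textbf{Main obstacle.} There is no real obstacle here: the content is the same two-line estimate already used for $S$, and the only points needing attention are the distinctness of the $2\ell+1$ rows and the remark that a row falling into $H$ never costs anything (its guaranteed count drops to $0\le m$, and the triple/pair inequalities only improve). If anything subtle appears, it would be making sure that in the second alternative the common product is indeed $a\in K$, which is immediate since $(a_i,b_i)\in T$ means $a_i\ga b_i\in K$.
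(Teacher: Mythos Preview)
Your proposal is correct and is essentially the paper's own proof, repackaged with the unifying notation $c_i,d_i$ so that the two alternatives are handled simultaneously rather than in two nearly identical paragraphs. The distinctness checks and the profit bookkeeping match the paper's argument line for line (including the use of $\dist_a\le q-1$ from $a\in K$); the only cosmetic omission is that $a\ne d_j$ should be noted explicitly, which is immediate since $a\in K$ and $d_j\notin K$.
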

\begin{proof}
Let $(a,b_1)$, $\dots$, $(a,b_\ell)\in T$. By \lref{Lm:Triple}, for every $i$ we have $\dist_{b_i}+\dist_{a\ga b_i} \ge n-\dist_a\ge n-q+1$. We cannot have $a=a\ga b_i$ for some $i$, else $b_i=1$, $(a,b_i)\not\in\diff(\ga,\gb)$, so $(a,b_i)\not\in T$. Hence the elements $a$, $b_1$, $\dots$, $b_\ell$, $a\ga b_1$, $\dots$, $a\ga b_\ell$ are distinct, with $a\ga b_i\in K$. The profit on $a$, $b_1$, $a\ga b_1$ is at least $n-(2m+q)$, while the profit on each of the $\ell-1$ pairs of rows $b_i$, $a\ga b_i$ for $i>1$ is at least $n-q+1-(m+q)$.

Now assume that $(a_i,b_i)\in T$, $a_i\ga b_i=a$ for some $a_i\in K$, $1\le i\le \ell$. By \lref{Lm:Triple}, for every $i$ we have $\dist_{a_i}+\dist_{b_i} \ge n-\dist_a\ge n-q+1$. We cannot have $a=a_i$ for some $i$, else $a=a_i\ga b_i=a\ga b_i$, $b_i=1$, $(a_i,b_i)\not\in T$. Hence the elements $a$, $a_1$, $\dots$, $a_\ell$, $b_1$, $\dots$, $b_\ell$ are distinct. The profit on $a_1$, $b_1$, $a=a_1\ga b_1$ is at least $n-(2m+q)$, while the profit on each of the $\ell-1$ pairs of rows $a_i$, $b_i$ for $i>1$ is at least $n-q+1-(m+q)$.
\end{proof}

\begin{lemma}\label{Lm:SquareT}
If there are $(a,b)$, $(c,d)\in T$ such that $|\{a,c,a\ga b,c\ga d\}|=4$ then $\pi\ge 2(n-q-2m)$.
\end{lemma}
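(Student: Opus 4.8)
The plan is to mirror the proof of \lref{Lm:SquareS}, exploiting the duality between $S$ and $T$. Write $(a,b)$, $(c,d)$ for the two elements of $T$ in the hypothesis. Since $(a,b),(c,d)\in T$, we have $a,c,a\ga b,c\ga d\in K$ and, by the remark following \eqref{e:defRSTU}, $b,d\notin K$; by hypothesis $a,c,a\ga b,c\ga d$ are four distinct rows, all lying in $K$. Hence $a,b,a\ga b$ are three distinct rows (as $b\notin K$ while $a,a\ga b\in K$, and $a\ne a\ga b$), and likewise $c,d,c\ga d$ are distinct. Crucially, the only way the six rows $a,b,a\ga b,c,d,c\ga d$ can fail to be pairwise distinct is $b=d$, since $b$ and $d$ are the only two of them lying outside $K$.

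In the generic case $b\ne d$ I would simply add the contributions of the two triples: the disjunction trick for $(a,b)\in T$ gives $\pi(a,b,a\ga b)\ge n-2m-q$, and similarly $\pi(c,d,c\ga d)\ge n-2m-q$, and since the six rows are distinct these profits are incurred on disjoint sets of rows, so $\pi\ge 2(n-2m-q)=2(n-q-2m)$.

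The case $b=d$ is the only real obstacle, and I would dispose of it with the disjunction trick for $T$ applied to $(c,d)$: set $c'=c^\gb\gb(c\ga d)$, so that $c\gb c'=c\ga d\in K$, $c'\notin K$, $(c,c')\in\diff(\ga,\gb)$, and $c'\ne d$ (because $(c,d)\in\diff(\ga,\gb)$ forces $c\gb d\ne c\ga d$). Working with the triple $(c,c',c\gb c')$ relative to $G(\gb)$ still yields $\pi(c,c',c\gb c')\ge n-2m-q$, its third row being again $c\gb c'=c\ga d$; and since $c'\notin K$ and $c'\ne d=b$, the six rows $a,b,a\ga b,c,c',c\ga d$ are pairwise distinct. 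Adding the two profits once more gives $\pi\ge 2(n-q-2m)$. The only facts needing verification beyond this are the ones already checked in the disjunction-trick paragraph (namely $c'\notin K$ and $(c,c')\in T$), together with the routine observation that the estimate $\pi(a,b,a\ga b)\ge n-2m-q$ survives the harmless possibility $a\ga b=1$ (the offending row then contributes $0$ but rows $a,b$ already give $\ge n-m-q$). I expect no further difficulty.
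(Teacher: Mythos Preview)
Your proof is correct and follows essentially the same route as the paper's: split into the cases $b\ne d$ and $b=d$, and in the latter replace the triple $(c,d,c\ga d)$ by $(c,c',c\gb c')$ with $c'=c^\gb\gb(c\ga d)$ via the disjunction trick, so that all six rows become distinct. Your aside about $a\ga b=1$ is harmless but unnecessary.
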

\begin{proof}
If $b\ne d$ then $|\{a,b,c,d,a\ga b,c\ga d\}|=6$ and we are done by \lref{Lm:Triple}. So let us assume that $b=d$. We can apply a disjunction trick and consider $e=c^\gb\gb(c\ga b)\in G\setminus K$, obtaining $e\ne b$, $\pi(c,e,c\gb e)\ge n-(2m+q)$. By our assumption, $\{a,a\ga b\}\cap\{c,c\gb e\}=\emptyset$. We therefore have additional profit of at least $n-(2m+q)$ on the rows $a$, $b$, $a\ga b$.
\end{proof}

\begin{lemma}\label{Lm:7T}
If $t\ge 7$ then $\pi\ge 2(n-q-2m)$.
\end{lemma}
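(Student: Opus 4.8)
The plan is to run the argument of \lref{Lm:7S} almost verbatim, with $T$ in place of $S$ and the dual lemmas \lref{Lm:RowT} and \lref{Lm:SquareT} in place of \lref{Lm:RowS} and \lref{Lm:SquareS}.

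First I would clear away the degenerate configurations. If three distinct elements of $T$ share their first coordinate $a\in K$, or if three distinct elements $(a_i,b_i)\in T$ share the same value $a_i\ga b_i$, then \lref{Lm:RowT} applied with $\ell=3$ gives $\pi\ge 3(n-2q-m+1)+q-m-1$, and a one-line check shows this is at least $2(n-q-2m)$: the difference of the two sides is $n-3q+2\ge 0$ since $3q=3\lceil n/3\rceil\le n+2$. Hence I may assume that no ``row'' of $T$ (fixed first coordinate) and no ``product class'' of $T$ (fixed value of $a\ga b$) contains more than two elements.

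Next I would form the multigraph $\Gamma_T$ on the vertex set $K$, placing an edge $\{x,y\}$ for each $(a,b)\in T$ with $\{x,y\}=\{a,a\ga b\}$; there are no loops since $(a,1)\notin\diff(\ga,\gb)$. For a fixed pair $\{y,z\}$, an edge arising from $(a,b)$ forces $(a,a\ga b)=(y,z)$ or $(z,y)$, and in either case $b$ is determined, so $\Gamma_T$ has at most two edges between any two vertices; combining this with the previous paragraph (at most two edges coming from ``$a=x$'' and at most two from ``$a\ga b=x$''), every vertex of $\Gamma_T$ has degree at most $4$. The crux is to exhibit a $2$-matching in $\Gamma_T$. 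If some vertex $x$ has two distinct neighbours $y,z$, then the edges meeting both $\{x,y\}$ and $\{x,z\}$ are precisely the (at most $4$) edges incident to $x$ together with the (at most $2$) copies of $\{y,z\}$, hence at most $6$ in all; since $t\ge 7$, some edge avoids $\{x,y\}$ or avoids $\{x,z\}$, giving a $2$-matching together with $\{x,y\}$ or $\{x,z\}$. If no vertex has two distinct neighbours, then $\Gamma_T$ is, up to multiplicities, a matching, which with $t\ge 7$ edges and at most two per pair forces at least two vertex-disjoint edges. Either way we obtain a $2$-matching $\{a,a\ga b\}$, $\{c,c\ga d\}$ whose four endpoints are distinct, i.e.\ $|\{a,c,a\ga b,c\ga d\}|=4$, and \lref{Lm:SquareT} then yields $\pi\ge 2(n-q-2m)$.

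I expect the only step requiring care to be the bookkeeping for $\Gamma_T$: checking that the two ways an edge can be incident to a vertex (``$a=x$'' versus ``$a\ga b=x$''), together with the non-degeneracy assumptions, genuinely cap the degree at $4$, and that a $2$-matching in $\Gamma_T$ delivers a pair of $T$-elements meeting the hypothesis of \lref{Lm:SquareT}. Once that is pinned down, the rest is the same routine counting already used for $\Gamma_S$.
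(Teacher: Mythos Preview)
Your proposal is correct and follows essentially the same approach as the paper: first dispose of three elements of $T$ sharing a row or a product via \lref{Lm:RowT}, then build the multigraph $\Gamma_T$ on $K$, argue exactly as in \lref{Lm:7S} that it has a $2$-matching, and finish with \lref{Lm:SquareT}. You have in fact spelled out a couple of details (the degree bound $4$ at each vertex and the check that $3(n-2q-m+1)+q-m-1\ge 2(n-q-2m)$ reduces to $n-3q+2\ge 0$) that the paper leaves implicit.
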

\begin{proof}
If there are three elements of $T$ in the same row or with the same
product, \lref{Lm:RowT} implies $\pi\ge 3(n-2q-m+1)+(q-m-1)\ge
2(n-q-2m)$. Suppose that no three elements of $T$ are in the same row
or have the same product.

Define a multigraph $\Gamma_T$ on $K$ where $\{x,y\}$ is an edge if
and only if there is $z$ such that either $(x,z)\in T$ and $x\ga z=y$,
or $(y,z)\in T$ and $y\ga z=x$. Then $\Gamma_T$ has $t$ edges and
there are no more than two edges between any two vertices of
$T$. Arguing as in the proof of \lref{Lm:7S}, we can show that
$\Gamma_T$ has a $2$-matching.

Hence there are $(a,b)$, $(c,d)\in T$ such that $|\{a,c,a\ga b,c\ga d\}|=4$, and we are done by \lref{Lm:SquareT}.
\end{proof}

Finally, we return to the graph $\Gamma_U$ based on the set $U$.

\begin{lemma}\label{Lm:Matching}
If $\Gamma_U$ has a rainbow $\ell$-matching then $\pi\ge\ell(n-2q-m)$.
\end{lemma}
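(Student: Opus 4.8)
The plan is to use the rainbow $\ell$-matching in $\Gamma_U$ to exhibit profit on $3\ell$ distinct rows, namely $n-2q-m$ units of profit per matched edge. Let $\{x_1,y_1\},\dots,\{x_\ell,y_\ell\}$ be the edges of the rainbow matching, colored by distinct elements $a_1,\dots,a_\ell$. By the way $\Gamma_U$ was constructed from $\Gamma'_U$, each edge $\{x_i,y_i\}$ colored $a_i$ arises from some $(a_i,b_i)\in U\subseteq U'$ with $\{x_i,y_i\}=\{b_i,a_i\ga b_i\}$. Since $(a_i,b_i)\in U'$ we have $a_i\in K$, $b_i\notin K$ and $a_i\ga b_i\notin K$; in particular $b_i\ne a_i\ga b_i$, so both of $x_i,y_i$ lie in $G\setminus K$, and the row $a_i$ lies in $K$.

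For each $i$, apply \lref{Lm:Triple} to the triple $(a_i,b_i)$: since $a_i\ga b_i\ne a_i\gb b_i$ we get $\dist_{a_i}+\dist_{b_i}+\dist_{a_i\ga b_i}\ge n$. As $a_i\in K$, we have $\dist_{a_i}\le q-1$, hence $\dist_{b_i}+\dist_{a_i\ga b_i}\ge n-q+1$, i.e. the two rows $b_i$ and $a_i\ga b_i$ together carry at least $n-q+1$ differences. The fundamental inequality \eqref{Eq:I4} only accounts for $q$ differences on each of these two rows of $G\setminus K$, i.e. $2q$ in total, but it also accounts for $m$ differences on the row $a_i\in K\setminus H$ — wait, the cleanest bookkeeping is to charge the profit to the two rows $b_i$, $a_i\ga b_i$ alone: \eqref{Eq:I4} guarantees $q$ on each, so $2q$ together, while we have at least $n-q+1$, giving profit at least $n-q+1-2q=n-3q+1$ on this pair. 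This is not quite the claimed bound, so instead I should also fold in the saving from row $a_i$: row $a_i$ contributes $\dist_{a_i}\ge m$, which is already counted in \eqref{Eq:I4}, so no extra profit there. Rechecking against the analogous \lref{Lm:RowS}/\lref{Lm:RowT} computations, the right charging is to the three rows $a_i,b_i,a_i\ga b_i$ simultaneously: these carry at least $n$ differences, \eqref{Eq:I4} guaranteed $m+q+q=m+2q$ on them, so profit at least $n-m-2q=n-2q-m$ per triple.

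The key point is now disjointness of all $3\ell$ rows across $i=1,\dots,\ell$. The rows $b_i$ and $a_i\ga b_i$ are distinct for each fixed $i$ (as noted above), and they lie in $G\setminus K$; the rows $\{b_i,a_i\ga b_i\}=\{x_i,y_i\}$ are pairwise disjoint across different $i$ because the matching is a matching. The rows $a_i$ lie in $K$, hence are automatically disjoint from all the $b_j,a_j\ga b_j\in G\setminus K$; and the $a_i$ are pairwise distinct because the matching is \emph{rainbow}, i.e. the colors $a_1,\dots,a_\ell$ are distinct. Therefore the $3\ell$ rows $a_1,b_1,a_1\ga b_1,\dots,a_\ell,b_\ell,a_\ell\ga b_\ell$ are all distinct, and summing the per-triple profit gives $\pi\ge\ell(n-2q-m)$, as claimed.

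The main obstacle is exactly this disjointness accounting — making sure that no row is charged twice when aggregating over the $\ell$ triples. The rainbow condition is what rescues us: it forces the colors $a_i$ (which are the $K$-rows) to be distinct, while the matching condition forces the $(G\setminus K)$-rows to be distinct; without the rainbow hypothesis two edges could share the color $a$ and we would double-count row $a$. One small technical check to dispatch along the way is that $a_i\ga b_i\ne a_i$ (otherwise $b_i=1$ and $(a_i,b_i)\notin\diff(\ga,\gb)$), which is automatic since $b_i\notin K\supseteq H\ni 1$. Everything else is a direct application of \lref{Lm:Triple} together with the definition of profit and the bound $\dist_{a_i}\le q-1$ valid for $a_i\in K$.
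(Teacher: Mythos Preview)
Your argument is correct and follows exactly the same route as the paper's proof: translate the rainbow $\ell$-matching into $\ell$ pairwise disjoint triples $\{a_i,b_i,a_i\ga b_i\}$ with $a_i\in K$ and $b_i,a_i\ga b_i\in G\setminus K$, then apply \lref{Lm:Triple} to each triple and compare against the baseline $m+2q$ from \eqref{Eq:I4}. The only issue is cosmetic: the mid-proof detour through the weaker $n-3q+1$ bound and the ``wait'' should be excised in a clean write-up, leaving just the three-row charging that you settle on.
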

\begin{proof}
The existence of a rainbow $\ell$-matching in $\Gamma_U$ is equivalent to the existence of $\ell$ pairwise disjoint sets $\{a_i,b_i,a\ga b_i\}$, where $(a_i,b_i)\in U$, so $a_i\in K$, $b_i$, $a\ga b_i\in G\setminus K$. The rest follows from \lref{Lm:Triple}.
\end{proof}

To illustrate the procedure outlined at the beginning of this section, let us eliminate $(n,h,k,m)=(24,1,16,3)$. Since $\cst_0(24) = 120$, $q=\lceil n/3\rceil=8$, and $(n-k)q+(k-h)m=109$, we need a profit of at least $12$. \lref{Lm:R} with $r=2$ (thus $\ell=2$) yields precisely $\pi\ge 12$. We can therefore assume $r\le 1$, which \lref{Lm:LargeR} cannot improve. \lref{Lm:RowS} yields a sufficient $\pi\ge 16$ with $\ell=2$ (but $\ell=1$ does not suffice), so $s\le1(k-h)=15$. Since \lref{Lm:7S} yields $\pi\ge 20$, we can improve the bound to $s\le 6$. Similarly, \lref{Lm:RowT} with $\ell=2$ yields $t\le15$, which \lref{Lm:7T} improves with $\pi\ge 20$ to $t\le 6$. Then \eqref{Eq:rstu} allows us to assume that $u\ge3(k-h)-1-6-6=32$, and thus that $\Gamma_U$ has at least $\lceil 32/2\rceil = 16$ edges. Since $\mu_3(n-k)=\mu_3(8)=15$ by \pref{Pr:Graph}, $\Gamma_U$ contains a rainbow $3$-matching. Then $\pi\ge 3(n-2q-m)=15>12$ by \lref{Lm:Matching}, which is what we need, and $(24,1,16,3)$ is eliminated.

A straightforward calculation shows that the only remaining cases of \eqref{Eq:Quadruples} are
\begin{equation}\label{Eq:Quadruples3}
    (24,1,\{17,18\},3),\quad(25,1,\{17,18\},3),\quad (26,1,19,3),\quad (27,1,\{19,20\},3).
\end{equation}
For these surviving cases the above procedure at least yields upper bounds on $r$, $s$, $t$ and a lower bound on $u$ as follows:
\begin{align*}
    &(24,1,17,3):\quad r\le 3,\,s\le 6,\,t\le 6,\,u\ge 33,\\
    &(24,1,18,3):\quad r\le 17,\,s\le 34,\,t\le 34,\,u\ge 0,\\
    &(25,1,17,3):\quad r\le 2,\,s\le 6,\,t\le 6,\,u\ge 34,\\
    &(25,1,18,3):\quad r\le 3,\,s\le 6,\,t\le 6,\,u\ge 36,\\
    &(26,1,19,3):\quad r\le 6,\,s\le 6,\,t\le 6,\,u\ge 36,\\
    &(27,1,19,3):\quad r\le 2,\,s\le 6,\,t\le 6,\,u\ge 40,\\
    &(27,1,20,3):\quad r\le 3,\,s\le 38,\,t\le 38,\,u\ge 0.
\end{align*}

\section{Stubborn cases}\label{Sc:Stubborn}

It is easy to check that the profit obtained from a rainbow $3$-matching in $U$ is not sufficient to eliminate any of the cases \eqref{Eq:Quadruples3}. We will need more delicate profits, for instance obtained from a rainbow $2$-matching in $U$ and an element $(a,b)\in S$ such that $a$, $b$, $a\ga b$ are disjoint from the vertices and colors of the rainbow $2$-matching. We start with two dual lemmas that in certain circumstances provide upper bounds on $s$ and $t$.

\begin{lemma}\label{Lm:s3}
If $s\ge 3$ and $q\ge m+1$ then $\pi\ge 2n-3q-3m+1$.
\end{lemma}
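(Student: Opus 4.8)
The plan is to extract, from the hypothesis $s\ge 3$, three elements of $S$ and show that they force profit in enough distinct rows. The subtlety, as flagged at the start of Section~\ref{Sc:Stubborn}, is that we must count each row at most once, so the argument will split according to how the three elements of $S$ are arranged. First I would invoke \lref{Lm:RowS}: if some row or column of $S$ contains $\ell=2$ of the three chosen pairs, then $\pi\ge 2(n-2q-m+1)+q-m-1 = 2n-3q-3m+1$ directly, which is exactly the bound we want. So we may assume that no two of our three pairs share a first coordinate or a second coordinate. Writing the three pairs as $(a_1,b_1),(a_2,b_2),(a_3,b_3)\in S$ with the $a_i$ distinct and the $b_i$ distinct, each $a_i,b_i\in K$, and by the observation following \eqref{e:defRSTU} each product $c_i:=a_i\ga b_i\notin K$ (and also $a_i\gb b_i\notin K$).

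Next I would apply \lref{Lm:Triple} to each triple: $\dist_{a_i}+\dist_{b_i}+\dist_{c_i}\ge n$, so since $a_i,b_i\in K$ we get $\dist_{c_i}\ge n-2m$ and the profit contributed on the single row $c_i$ (relative to the $q$ guaranteed there by \eqref{Eq:I4}) is at least $n-2m-q$; similarly on the three rows $a_i,b_i,c_i$ together the profit is at least $n-(2m+q)$. The disjunction trick for $S$ gives us the freedom to replace $c_i$ by $a_i\gb b_i$, both lying in $G\setminus K$. Now the combinatorial heart of the argument: among $\{a_1,a_2,a_3,b_1,b_2,b_3\}$, a six-element multiset of elements of $K$, we want to find an index pair, say $1$ and $2$, together with choices $\gc_1,\gc_2\in\{\ga,\gb\}$, so that the six rows $a_1,b_1,a_1\gc_1 b_1,a_2,b_2,a_2\gc_2 b_2$ are all distinct. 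The rows $a_1,b_1,a_2,b_2$ are distinct by our reduction (the $a$'s are distinct, the $b$'s are distinct; an $a_i$ could in principle equal a $b_j$, and I would handle the case $a_i=b_j$ as a sub-case, perhaps appealing to \lref{Lm:RowS} with the roles of row/column interchanged or to \lref{Lm:SquareS}). The products $a_1\gc_1 b_1$ and $a_2\gc_2 b_2$ lie in $G\setminus K$ while $a_1,b_1,a_2,b_2\in K$, so the only collision to rule out is $a_1\gc_1 b_1 = a_2\gc_2 b_2$; with two independent binary choices this can be defeated unless all four products $a_1\ga b_1,a_1\gb b_1,a_2\ga b_2,a_2\gb b_2$ coincide, and I would show that configuration is impossible (it would force $a_1\ga b_1=a_1\gb b_1$, contradicting $(a_1,b_1)\in\diff(\ga,\gb)$). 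Having six distinct rows, the profit from the pair is $\pi\ge (n-2m-q)+(n-m-2q+1)=2n-3q-3m+1$, using the ``one full triple plus one extra pair'' bookkeeping of \lref{Lm:RowS}.

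The hypothesis $q\ge m+1$ is what guarantees $a_i\gb b_i$ and $c_i$ genuinely lie outside $K$ and that these single-row profits $n-2m-q$ are nonnegative (it also keeps the disjunction trick honest), so it should enter exactly at the point where we certify the six rows are in the claimed parts of the partition $K\cup(G\setminus K)$. The main obstacle I anticipate is the bookkeeping in the degenerate sub-cases where some $a_i$ equals some $b_j$ or where products collide; I expect these to be dispatched either by a direct re-indexing so that \lref{Lm:RowS} (or its transposed form) applies with $\ell=2$, or by \lref{Lm:SquareS} when four of the six symbols are already distinct. In every branch the bound obtained is $2n-3q-3m+1$ or better, which is precisely the claim.
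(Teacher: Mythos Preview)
Your outline is in the right spirit and touches the same ingredients as the paper's proof, but it is both more complicated than necessary and contains two concrete missteps.

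First, the paper's argument needs only \emph{two} elements of $S$, not three. Since $s\ge 3$, among any three distinct pairs in $S$ one can find two, say $(a,b)$ and $(c,d)$, with $|\{a,b,c,d\}|\ge 3$ (if every two of them had $|\{a,b,c,d\}|=2$ they would all lie in $\{(x,y),(y,x)\}$, only two possibilities). The paper then simply splits on $|\{a,b,c,d\}|=4$ (apply \lref{Lm:SquareS}) versus $|\{a,b,c,d\}|=3$ (apply \lref{Lm:RowS} with $\ell=2$ when $a=c$ or $b=d$; the overlaps $a=d$ or $b=c$ are handled by a minor variant of the same argument). Your three-element reduction eventually funnels into exactly these sub-cases, but with extra bookkeeping and the $a_i=b_j$ branch still left unresolved.

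Second, you mis-locate the hypothesis $q\ge m+1$. The products $a\ga b$ and $a\gb b$ lie in $G\setminus K$ automatically for $(a,b)\in S$, as noted just after \eqref{e:defRSTU}; this does not use $q\ge m+1$. The hypothesis is actually consumed in the size-4 case: \lref{Lm:SquareS} gives $\pi\ge 2(n-q-2m)$, and $2(n-q-2m)\ge 2n-3q-3m+1$ is equivalent to $q\ge m+1$. Relatedly, your ``six distinct rows'' computation $(n-2m-q)+(n-m-2q+1)$ invokes the ``one full triple plus one extra pair'' bookkeeping of \lref{Lm:RowS}, but that bookkeeping is for the situation where the second pair \emph{shares} the row $a$ with the first triple. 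With six genuinely distinct rows each triple contributes $n-2m-q$, so you should simply cite \lref{Lm:SquareS}. Your stated bound happens to be $\le 2(n-q-2m)$ precisely when $q\ge m+1$, so the conclusion survives, but the justification as written is not correct.
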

\begin{proof}
If there are $(a,b)$, $(c,d)\in S$ with $|\{a,b,c,d\}|=4$, we are done by \lref{Lm:SquareS} and $q\ge m+1$. Otherwise there are $(a,b)$, $(c,d)\in S$ with $|\{a,b,c,d\}|=3$. If either $a=c$ and $b\ne d$, or $a\ne c$ and $b=d$, then $\pi\ge 2n-3q-3m+1$ by \lref{Lm:RowS} with $\ell=2$. The cases when $a=d$ or $b=c$ yield the same profit by an argument similar to \lref{Lm:RowS}. We cannot have $a=b$ or $c=d$ by the definition of $S$.
\end{proof}

\begin{lemma}\label{Lm:t3}
If $t\ge 3$ and $q\ge m+1$ then $\pi\ge 2n-3q-3m+1$.
\end{lemma}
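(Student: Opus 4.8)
The plan is to mirror the proof of \lref{Lm:s3}, replacing the role of $S$ by $T$ and using the dual structural lemmas. Recall that for $(a,b)\in T$ the relevant three rows are $a$, $b$, $a\ga b$, with $a$, $a\ga b\in K$ and $b\not\in K$, and that \lref{Lm:RowT} and \lref{Lm:SquareT} are the exact analogues of \lref{Lm:RowS} and \lref{Lm:SquareS}. So the argument has the same case split according to how much two elements of $T$ overlap.

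First I would pick two distinct elements $(a,b)$, $(c,d)\in T$; since $t\ge 3$ such a pair exists, and in fact we have three elements to play with if needed. The natural invariant to track is the multiset $\{a, a\ga b, c, c\ga d\}$ of the four ``$K$-rows'' together with the two ``non-$K$-rows'' $b$, $d$. If $|\{a,c,a\ga b,c\ga d\}|=4$, then \lref{Lm:SquareT} gives $\pi\ge 2(n-q-2m)$, and since $q\ge m+1$ we have $2(n-q-2m)\ge 2n-3q-3m+1$ (because $q-m\ge 1$), so we are done in this case. Otherwise two of those four $K$-rows coincide. I would then enumerate the coincidence patterns: $a=c$ with $a\ga b\ne c\ga d$, giving $(a,b),(a,d)\in T$ in the same row, so \lref{Lm:RowT} with $\ell=2$ applies directly and yields $\pi\ge 2(n-2q-m+1)+(q-m-1)=2n-3q-3m+1$; dually $a\ga b=c\ga d$ with $a\ne c$, which is precisely the ``same product'' situation covered by the second half of \lref{Lm:RowT} with $\ell=2$, giving the same bound. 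The remaining mixed pattern is $a=c\ga d$ (or symmetrically $c=a\ga b$); here one should argue as in \lref{Lm:RowT}: the element $a$ plays simultaneously the role of a $K$-row via $(a,b)\in T$ and the role of a product via $(c,d)\in T$ with $c\ga d=a$, so one chains $\dist_b+\dist_{a\ga b}\ge n-q+1$ and $\dist_c+\dist_d\ge n-q+1$, checks (using $b,d\not\in K$ and $a\in K$) that the rows $a$, $b$, $a\ga b$, $c$, $d$ are all distinct unless forced otherwise, and reads off profit at least $2n-3q-3m+1$. The case $a=c$ with $a\ga b=c\ga d$ forces $b=d$, impossible for distinct elements of $T$; similarly $a\ga b=c\ga d$ together with $a=c$ is the same degenerate situation, so these do not arise.

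The main obstacle I expect is bookkeeping rather than anything deep: making sure that in each coincidence pattern the rows on which profit is claimed really are pairwise distinct (so the profits add and no row is double-counted), and checking the degenerate identifications like $b=1$ that would throw a pair out of $T$ — exactly the kind of sanity checks already performed inside \lref{Lm:RowT}. Since $q\ge m+1$ is in force throughout, every ``$2(n-q-2m)$'' bound that appears is at least the target $2n-3q-3m+1$, so the only real content is funnelling each case into either \lref{Lm:SquareT} or (an instance of the argument of) \lref{Lm:RowT}. I would keep the write-up short by citing \lref{Lm:RowT} and \lref{Lm:SquareT} for the clean cases and spelling out only the single mixed pattern $a=c\ga d$ in full, in one or two sentences, exactly as the proof of \lref{Lm:s3} handles $a=d$ and $b=c$ by "an argument similar to \lref{Lm:RowS}".
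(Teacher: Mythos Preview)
Your outline follows the paper's proof closely and the clean cases ($|\{a,c,a\ga b,c\ga d\}|=4$, $a=c$, $a\ga b=c\ga d$) are handled exactly as the paper does. However, there is a genuine gap in the mixed case $a=c\ga d$.

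You write that one ``checks \dots\ that the rows $a$, $b$, $a\ga b$, $c$, $d$ are all distinct unless forced otherwise'' and then reads off the profit. But the coincidence $b=d$ is \emph{not} excluded: one can have $(a,b),(c,b)\in T$ with $c\ga b=a$, $c\ne a$, $c\ne a\ga b$. In that subcase only four rows $a,b,c,a\ga b$ are available, and your two inequalities $\dist_b+\dist_{a\ga b}\ge n-q+1$ and $\dist_c+\dist_d\ge n-q+1$ both charge the same row $b=d$; summing them does not give a bound on $\dist_a+\dist_b+\dist_c+\dist_{a\ga b}$ strong enough to reach $2n-3q-3m+1$. Mentioning that ``we have three elements to play with'' does not close this: a third element of $T$ could share rows with the first two in the same way, and you have not argued otherwise.

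The paper resolves this subcase with the disjunction trick for $T$: replace $(a,b)$ by $(a,e)$ with $e=a^{\gb}\gb(a\ga b)$, so that $(a,e)\in T$, $a\gb e=a\ga b$, and crucially $e\ne b$. Now the two triples $(a,e,a\ga b)$ and $(c,b,c\ga d)=(c,b,a)$ share only the row $a$, giving five distinct rows and the required profit $(n-2m-q)+(n-2q-m+1)$. You should either invoke this trick explicitly or supply an alternative argument for the $b=d$ subcase; ``unless forced otherwise'' does not suffice here.
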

\begin{proof}
If there are $(a,b)$, $(c,d)\in T$ with $|\{a,c,a\ga b, c\ga d\}|=4$, we are done by \lref{Lm:SquareT} and $q\ge m+1$. Otherwise there are $(a,b)$, $(c,d)\in T$ with $|\{a,c,a\ga b,c\ga d\}|=3$. The cases when $a=c$ or $a\ga b=c\ga d$ are handled by \lref{Lm:RowT}.

If either $a=c\ga d$ or $c=a\ga b$, we can assume without loss of generality that $a = c\ga d$. If $b\ne d$ then $a$, $b$, $c$, $d$, $a\ga b$ are distinct, and the profit on the rows $a$, $b$, $a\ga b$ is at least $n-(2m+q)$. Since $\dist_c+\dist_d\ge n-\dist_{c\ga d}\ge n-q+1$, the profit on the rows $c$, $d$ is at least $n-2q-m+1$, and the total profit is at least $2n-3q-3m+1$.

Finally suppose that $a=c\ga d$, $b=d$, and the elements $a$, $b$, $c$, $a\ga b$ are distinct. Using a disjunction trick for $(a,b)$, let us consider $(a,e=a^\gb\gb(a\ga b))\in T$ and $(c,b=d)\in T$, focusing on the rows $a$, $e$, $a\gb e = a\ga b$, $c$, $b=d$, $c\ga d$, which are distinct, except that $a=c\ga d$. We finish as above.
\end{proof}

\begin{lemma}\label{Lm:U}
We have $u\le (n-k)(n-k-1)$.
\end{lemma}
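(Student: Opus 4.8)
The bound $u\le(n-k)(n-k-1)$ is a pure counting statement about the set $U'$ from which $U$ is selected, so the plan is to bound $|U'|$ directly and note $u=|U|\le|U'|$. Recall that
\[
U'=\{(a,b)\in\diff(\ga,\gb);\;a\in K,\,a\ga b\not\in K,\,b\not\in K\}.
\]
First I would observe that the map $(a,b)\mapsto(b,a\ga b)$ sends each element of $U'$ to an ordered pair of elements of $G\setminus K$. The key point is that this map is injective: if $(a,b)$ and $(a',b')$ lie in $U'$ with $b=b'$ and $a\ga b=a'\ga b'$, then $a\ga b=a'\ga b$ forces $a=a'$ by right-cancellation in $G(\ga)$, so the pair $(b,a\ga b)$ determines $(a,b)$. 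Hence $|U'|$ is at most the number of ordered pairs $(x,y)$ with $x,y\in G\setminus K$.

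Next I would rule out $x=y$. Suppose $(a,b)\in U'$ with $b=a\ga b$; then $a=1$ by cancellation, but $1\in H\subseteq K$ (we always assume $1\in H$), contradicting $a\notin$ — wait, $a\in K$ is allowed — however $a=1$ gives $a\ga b=1\ga b=b$, and $(1,b)\notin\diff(\ga,\gb)$ since $1\ga b=b=1\gb b$; thus $(1,b)\notin U'$. So no element of $U'$ has $b=a\ga b$, meaning the image of the injection lies in the set of \emph{ordered pairs of distinct} elements of $G\setminus K$, of which there are exactly $(n-k)(n-k-1)$. Therefore $|U'|\le(n-k)(n-k-1)$, and since $U\subseteq U'$ we get $u\le|U'|\le(n-k)(n-k-1)$.

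The argument is essentially routine; the only point requiring a moment's care is confirming that the two coordinates $b$ and $a\ga b$ of the image both lie in $G\setminus K$ — but that is built into the definition of $U'$ — and that the image pair has distinct coordinates, which is the $a\neq1$ observation above. There is no real obstacle here; the statement is a crude but convenient upper bound, weaker than what one could extract with more work, and it is used only as a fallback default bound (alongside the sharper Lemmas \ref{Lm:RowS}--\ref{Lm:7S} and their $T$-analogues) in the case analysis of Section \ref{Sc:Stubborn}.
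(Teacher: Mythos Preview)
Your proof is correct and follows essentially the same argument as the paper: both use the injection $(a,b)\mapsto(b,a\ga b)$ from $U$ (or $U'$) into ordered pairs of distinct elements of $G\setminus K$, with distinctness coming from $a\ne 1$. The only cosmetic difference is that you bound $|U'|$ first and then invoke $U\subseteq U'$, whereas the paper applies the injection directly to $U$; the substance is identical.
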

\begin{proof}
An element $(c,d)\in U$ determines the ordered pair $(d,c\ga d)\in (G\setminus K)\times (G\setminus K)$ with $d\ne c\ga d$ (since $c\ne 1$) and vice versa.
\end{proof}

We now elaborate on the idea of rainbow matchings in $U$ disjoint from
elements of $R$, $S$ and/or $T$.

For $(a,b)\in R\cup S\cup T$, let $U\swr (a,b)=\{(c,d)\in U;\;\{c,d,c\ga d\}\cap \{a,b,a\ga b\}=\emptyset\}$. For $(a,b)$, $(c,d)\in R\cup S\cup T$, let $U\swr(a,b)(c,d) = \{(e,f)\in U;\;\{a,b,a\ga b,c,d,c\ga d\}\cap \{e,f,e\ga f\}=\emptyset\}$.

\begin{lemma}\label{Lm:Uwr}
For $(a,b)\in R\cup S\cup T$, we have
\begin{displaymath}
    |U\swr(a,b)|\ge
    \left\{\begin{array}{ll}
        u - (2n-2k+1),&\text{ if }(a,b)\in R,\\
        u - (2n-2k+4),&\text{ if }(a,b)\in S\cup T.
    \end{array}\right.
\end{displaymath}
\end{lemma}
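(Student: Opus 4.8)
The plan is to estimate how many pairs $(c,d)\in U$ are excluded from $U\swr(a,b)$ --- that is, satisfy $\{c,d,c\ga d\}\cap W\ne\emptyset$ with $W=\{a,b,a\ga b\}$ --- and then subtract this count from $u$. Three structural facts do the work. First, every $(c,d)\in U$ lies in $U'$, so $c\in K$ while $d$ and $c\ga d$ lie in $G\setminus K$; hence $(c,d)$ is excluded precisely when $c\in W\cap K$, or when one of $d$, $c\ga d$ lies in $W\cap(G\setminus K)$. Second, by the minimality of $U$ every row of $G$ meets $U$ in at most three pairs: were some row to contain four elements of $U$, we could delete one of them, still leaving $R\cup S\cup T\cup U$ with at least three elements in that row and none removed from any other row, contradicting minimality. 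Third, from the analysis that precedes the definition of $\Gamma_U$, at most two elements of $U$ give rise to the same unordered pair $\{d,c\ga d\}$ (the only two candidates being $(y\ga x^\ga,x)$ and $(x\ga y^\ga,y)$ for a pair $\{x,y\}$); consequently, for any fixed $x\in G\setminus K$ there are at most $2(n-k-1)$ pairs $(c,d)\in U$ with $x\in\{d,c\ga d\}$.

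It remains to locate $W\cap K$ and $W\cap(G\setminus K)$ in the three cases. If $(a,b)\in R$, then $a=b$, and \lref{Lm:Triple} together with $a\in K$ gives $\dist_{a\ga a}\ge n-2\dist_a>n/3$, so $a\ga a\notin K$; thus $W\cap K=\{a\}$ and $W\cap(G\setminus K)=\{a\ga a\}$. If $(a,b)\in S$, then $a\ga b\notin K$ by the observation following \eqref{e:defRSTU} and $a\ne b$, so $W\cap K=\{a,b\}$ has two elements while $W\cap(G\setminus K)=\{a\ga b\}$. If $(a,b)\in T$, then $b\notin K$ by the same observation, and $a\ne a\ga b$ because $(a,b)\in\diff(\ga,\gb)$ forces $b\ne1$, so $W\cap K=\{a,a\ga b\}$ has two elements while $W\cap(G\setminus K)=\{b\}$.

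Finally, add up the contributions. Each element of $W\cap K$ is the first coordinate of at most three pairs of $U$, and the unique element of $W\cap(G\setminus K)$ occurs as $d$ or as $c\ga d$ for at most $2(n-k-1)$ pairs of $U$. Hence at most $3+2(n-k-1)=2n-2k+1$ pairs of $U$ are excluded when $(a,b)\in R$, and at most $3+3+2(n-k-1)=2n-2k+4$ pairs are excluded when $(a,b)\in S\cup T$; subtracting from $u$ gives the two inequalities. The only delicate point is the accounting --- one must check that each of the three ways a pair can be excluded is charged against exactly one of the two bounds, and that the ``at most two parallel copies'' fact is applied to elements of $U$ rather than to edges of the simple graph $\Gamma_U$ --- but every double count only strengthens the inequality, so there is no genuine obstacle.
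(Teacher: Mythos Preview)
Your proof is correct and follows the same overall strategy as the paper: bound the number of pairs $(c,d)\in U$ excluded from $U\swr(a,b)$ by classifying how $\{c,d,c\ga d\}$ meets $W=\{a,b,a\ga b\}$. Only the bookkeeping differs. For $(a,b)\in S$, say, the paper splits into the four sub-cases $c=a$, $c=b$, $d=a\ga b$, $c\ga d=a\ga b$ and bounds them by $2$, $3$, $n-k$, $n-k-1$ respectively, the last two via the Latin-square property. You instead group the first two as ``$c\in W\cap K$'' (at most $3$ per row, hence $6$) and the last two as ``$a\ga b\in\{d,c\ga d\}$'' (at most $2(n-k-1)$ via the double-edge property of $\Gamma'_U$). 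The totals coincide because your row bound is one unit looser (you use $3$ for the row that already contains the element of $R\cup S\cup T$, where the paper uses $2$) while your edge bound is one unit tighter ($2(n-k-1)$ versus $(n-k)+(n-k-1)$); the same trade-off occurs in the $R$ and $T$ cases. Your version has the virtue of reusing the $\Gamma'_U$ analysis already set up in \sref{Sc:Graph}; the paper's argument is self-contained at this point and does not invoke the multigraph.
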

\begin{proof}
Assume that $(a,b)\in S$. Then an element $(c,d)\in U$ does not belong to $U\swr(a,b)$ if and only if one of the following occurs: $c=a$, $c=b$, $d=a\ga b$, $c\ga d=a\ga b$. Now, $c=a$ can occur for at most $2$ elements of $U$, by the definition of $U$, given that row $a$ contains $(a,b)\in S$. We have $c=b$ at most $3$ times. We have $d=a\ga b$ at most $n-k$ times, because the column $a\ga b$ contains at most $n-k$ values from $G\setminus K$. Finally, $c\ga d=a\ga b$ occurs at most another $n-k-1$ times, because the value $a\ga b$ can occur at most once in every column of $G\setminus K$, and we have already accounted for all elements of $U$ in column $a\ga b$. The result for $(a,b)\in S$ follows.

Assume that $(a,b)\in T$. Then an element $(c,d)\in U$ does not belong to $U\swr(a,b)$ if and only if one of the following occurs: $c=a$, $c=a\ga b$, $d=b$, $c\ga d=b$. The rest is analogous to the case $(a,b)\in S$.

Assume that $(a,b)=(a,a)\in R$. Then an element $(c,d)\in U$ does not belong to $U\swr(a,b)$ if and only if one of the following occurs: $c=a$, $d=a\ga a$, $c\ga d = a\ga a$. The rest is analogous to the case $(a,b)\in S$.
\end{proof}

\begin{lemma}\label{Lm:Uwr2}
If $(a,b)$, $(c,d)\in S\cup T$ then $|U\swr(a,b)(c,d)| \ge u - (4n-4k+8)$. If $(a,b)$, $(c,d)\in S$ and $|\{a,b,c,d\}|=3$ then $|U\swr(a,b)(c,d)| \ge u - (4n-4k+5)$.
%If $(a,b)$, $(c,d)\in T$ and $|\{a,c,a\ga b,c\ga d|=3$ then $|U\swr(a,b)(c,d)|\ge u - (4n-4k+3m-4)$.
\end{lemma}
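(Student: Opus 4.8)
The plan is to derive both inequalities from \lref{Lm:Uwr} by a union bound, with a small extra saving in the second case coming from an overlap between the two excluded sets. First I would unpack the proof of \lref{Lm:Uwr}: for $(a,b)\in S\cup T$ it exhibits $U\setminus U\swr(a,b)$ as a union of four explicitly described subsets of $U$, one for each of the four ``coincidence conditions'' on a generic $(g,f)\in U$, of sizes at most $2$, $3$, $n-k$ and $n-k-1$ respectively (for $(a,b)\in S$ these conditions are $g=a$, $g=b$, $f=a\ga b$ and $g\ga f=a\ga b$; the first is bounded by $2$ because row $a$ carries $(a,b)\in S$, so by minimality of $U$ that row has at most two $U$-entries; the case $(a,b)\in T$ is analogous). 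Since $U\swr(a,b)(c,d)=U\swr(a,b)\cap U\swr(c,d)$ straight from the definitions, we have $U\setminus U\swr(a,b)(c,d)=\big(U\setminus U\swr(a,b)\big)\cup\big(U\setminus U\swr(c,d)\big)$, so summing the eight per-condition bounds gives $|U\setminus U\swr(a,b)(c,d)|\le 2\big(2+3+(n-k)+(n-k-1)\big)=4n-4k+8$, which is the first inequality.

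For the second inequality, assume $(a,b),(c,d)\in S$ with $|\{a,b,c,d\}|=3$. As $a\ne b$ and $c\ne d$, exactly one of the coincidences $a=c$, $a=d$, $b=c$, $b=d$ occurs (any two of them would force $|\{a,b,c,d\}|\le 2$). In each case one of the four conditions attached to $(a,b)$ is literally the same condition as one attached to $(c,d)$ -- always of the form ``$g=x$'' for the element $x$ at which the coincidence occurs -- and so the corresponding subset of $U$ has been counted twice in the sum $4n-4k+8$; any further accidental coincidences among the remaining conditions only shrink the true excluded set, so they can be ignored. I would then bound the true size of the shared subset $\{(g,f)\in U;\;g=x\}$ using the minimality of $U$: if $a=c$, then row $a=c$ carries the two distinct $S$-elements $(a,b)$ and $(a,d)$, so $U$ has at most one entry there, and the naive allotment $2+2$ is replaced by $\le 1$; if $a=d$ or $b=c$, the shared row $x$ still carries an $S$-element, so $U$ has at most two entries there, and the naive allotment $2+3$ is replaced by $\le 2$; if $b=d$, the naive allotment $3+3$ is replaced by $\le 3$. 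In every case the saving is at least $3$, hence $|U\setminus U\swr(a,b)(c,d)|\le 4n-4k+5$, which is the claim.

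The routine arithmetic aside, the one point that needs care is the case analysis in the second part: matching up correctly which of the four defining conditions of $(a,b)$ and of $(c,d)$ coincide in each of the four coincidence cases, checking that a second forced coincidence would violate $|\{a,b,c,d\}|=3$, and consistently invoking the sharpenings from the minimality of $U$ (``at most two $U$-entries in a row carrying an element of $R\cup S\cup T$'', and ``at most one $U$-entry in a row carrying two elements of $S$''). I expect this bookkeeping to be the main, and essentially the only, obstacle.
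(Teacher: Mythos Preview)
Your proposal is correct and follows essentially the same approach as the paper: a union bound via \lref{Lm:Uwr} for the first inequality, and a case-by-case analysis of the shared ``$g=x$'' condition for the second. Your treatment is in fact slightly more explicit than the paper's, which invokes a ``without loss of generality'' to reduce to the cases $a=c$ and $a=d$; you handle all four coincidences $a=c$, $a=d$, $b=c$, $b=d$ directly, and your savings of at least $3$ in each case match the paper's final count of $7+2(n-k)+2(n-k-1)=4n-4k+5$.
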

\begin{proof}
For $(a,b)$, $(c,d)\in S\cup T$, apply a variation of \lref{Lm:Uwr} twice. The worst case estimate $|U\swr(a,b)(c,d)| \ge u - (4n-4k+8)$ is obtained when $|\{a,b,a\ga b,c,d,c\ga d\}|=6$.

Suppose that $(a,b)$, $(c,d)\in S$ and $|\{a,b,c,d\}|=3$. An element
$(e,f)\in U$ does not belong to $U\swr(a,b)(c,d)$ if and only if one
of the following occurs: $e\in\{a,b,c,d\}$, $f\in\{a\ga b,c\ga d\}$,
or $e\ga f\in\{a\ga b,c\ga d\}$. Since
$|\{a,b,c,d\}|=3$, we can assume without loss of generality that
either $a=c$, $b$, $d$ are distinct, or $a=d$, $b$, $c$ are
distinct. (Note that $a=b$ is impossible since $(a,b)\in S$.) If
$a=c$, $b$, $d$ are distinct, then $e=a$ occurs at most once (since
$(a,b)$, $(c,d)\in S$), $e=b$ at most $3$ times, and $e=d$ at most $3$
times. If $a=d$, $b$, $c$ are distinct, then $e=a$ occurs at most
twice, $e=b$ at most $3$ times, and $e=c$ at most twice. Hence in both
cases, $e\in\{a,b,c,d\}$ occurs for at most $7$ elements $(e,f)\in U$.

As before, we eliminate up to $2(n-k)$ elements $(e,f)\in U$ with
$f\in\{a\ga b,c\ga d\}$, and a further $2(n-k-1)$ with
$e\ga f\in\{a\ga b,c\ga d\}$.
\end{proof}

Note that in all cases \eqref{Eq:Quadruples3} we have $k>2n/3$. The following lemma will therefore apply to these cases.

\begin{lemma}\label{Lm:rs0}
Assume that $n\ge 12$ and $k>2n/3$. Then $r+s>0$ or $G(\ga)$, $G(\gb)$ are isomorphic via a transposition.
\end{lemma}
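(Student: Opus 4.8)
\emph{Proof proposal.} The plan is to show that the hypothesis $r+s=0$ forces $\ga=\gb$; since throughout we assume $G(\ga)\ne G(\gb)$, this means the situation $r+s=0$ simply does not arise, so the first alternative of the dichotomy always holds here. I begin with two observations. First, $R\cup S=\{(a,b)\in\diff(\ga,\gb);\,a,b\in K\}$, so $r+s=0$ is exactly the statement that $a\ga b=a\gb b$ whenever $a,b\in K$; equivalently, $L_a(\ga)$ and $L_a(\gb)$ agree on $K$ for every $a\in K$. Second, writing $D=G\setminus K$ we have $|D|=n-k<n/3$, and since $k=|K|>n/2$, for every $g\in G$ the bijection $d\mapsto d^\ga\ga g$ gives $|\{d\in G;\,d^\ga\ga g\in K\}|=k$, whence $|\{d\in K;\,d^\ga\ga g\in K\}|\ge 2k-n>0$; for any such $d$ one may write $g=d\ga(d^\ga\ga g)$ with both factors in $K$.

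The crucial step will be to upgrade the agreement to $L_a(\ga)=L_a(\gb)$ on all of $G$, for every $a\in K$. Fix $a\in K$; as $a\ga c=a\gb c$ already holds for $c\in K$, I would take $c\in D$ and suppose for contradiction that $a\ga c\ne a\gb c$. If $d\in K$ satisfies both $d^\ga\ga c\in K$ and $a\ga d\in K$, then, setting $e=d^\ga\ga c$, associativity together with three applications of $r+s=0$ yields $a\ga c=a\ga(d\ga e)=(a\ga d)\ga e=(a\ga d)\gb e=(a\gb d)\gb e=a\gb(d\gb e)=a\gb c$ (the three replacements of a $\ga$-product of two elements of $K$ by the corresponding $\gb$-product are legitimate because $a\ga d,e\in K$, because $a\ga d=a\gb d$, and because $d\gb e=d\ga e=c$), a contradiction. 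Hence every $d\in K$ with $d^\ga\ga c\in K$ must satisfy $a\ga d\in D$; but $|\{d\in G;\,a\ga d\in D\}|=|D|=n-k$, while there are at least $2k-n$ such $d$, and $2k-n>n-k$ exactly when $k>2n/3$. This contradiction shows $a\ga c=a\gb c$.

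To conclude, I would take an arbitrary $g\in G$ and write $g=b\ga c$ with $b,c\in K$ (possible by the count in the first paragraph); then $g=b\gb c$ as well, and, applying the previous step to $b$ and to $c$, $L_g(\ga)=L_b(\ga)L_c(\ga)=L_b(\gb)L_c(\gb)=L_{b\gb c}(\gb)=L_g(\gb)$. Thus $\ga=\gb$, contrary to $G(\ga)\ne G(\gb)$, so $r+s>0$. The part I expect to be most delicate is the pigeonhole comparison in the middle paragraph: one has to verify that "$a\ga d\in K$" is precisely the obstruction, so that ruling it out forces $a\ga d\in D$ for \emph{all} the relevant $d\in K$, and then that the two counts $n-k$ and $2k-n$ collide in exactly the stated range $k>2n/3$ (the hypothesis $n\ge 12$ is in fact not needed for this argument).
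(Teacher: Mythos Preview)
Your argument is correct, and it actually proves more than the lemma states: you show that under $k>2n/3$ the hypothesis $r+s=0$ forces $K\subseteq H$, hence (since $H$ is a subgroup by \lref{Lm:HSubgroup} and $|H|>2n/3$) $H=G$ and $\ga=\gb$. So the second alternative of the dichotomy is in fact vacuous, and the hypothesis $n\ge 12$ is not used. The associativity chain and the pigeonhole count are both sound; the only cosmetic point is that the closing step via left translations is unnecessary once you have $K\subseteq H$.

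This is a genuinely different route from the paper's. The paper does \emph{not} conclude $\ga=\gb$: it invokes Dr\'apal's structural result (\cite[Proposition~3.1]{DrEJC}, the source of \pref{Pr:3n4}), observing that its proof goes through for $k>2n/3$ provided one patches part~(iv) using $r+s=0$, and thereby obtains an isomorphism $f:G(\ga)\to G(\gb)$ fixing every element of $K$. It then appeals to \cite[Proposition~6.1]{DrEJC} (cf.\ \pref{Pr:ManyFixedPoints}, where $n\ge 12$ enters) to deduce $\dist(\ga,\gb)\ge\cst_0(\ga)$, with equality forcing $f$ to be a transposition. Your approach is more elementary and self-contained, avoiding the external citations entirely; the paper's approach, by contrast, ties the lemma into the general ``isomorphism with many fixed points'' machinery used elsewhere and yields the explicit transposition description that matches how the result is phrased.
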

\begin{proof}
Assume that $r+s=0$. The proof of \cite[Proposition 3.1]{DrEJC} (our \pref{Pr:3n4}) goes through with $k>2n/3$ (rather than $k>3n/4$), except for part (iv), as explicitly noted already by Dr\'apal in \cite{DrEJC}. With our assumption $r+s=0$, we can replace the proof of (iv) with the following: Let $g\in G$. Then there are $a$, $b\in K$ such that $g=a\ga b$, since $k>n/2$. Assume $g=a_i\ga b_i$ for some $a_i$, $b_i\in K$, $1\le i\le 2$. If $a_1\gb b_1\ne a_2\gb b_2$ then there is $i$ such that $a_i\ga b_i\ne a_i\gb b_i$, and for this $i$ we have $(a_i,b_i)\in R\cup S$, a contradiction. Thus $a_1\gb b_1 = a_2\gb b_2$.

We can now conclude from \cite[Proposition 3.1]{DrEJC} that there is an isomorphism $f:G(\ga)\to G(\gb)$ such that $f(a)=a$ for every $a\in K$. Then by \cite[Proposition 6.1]{DrEJC}, $\dist(\ga,\gb)\ge \cst_0(\ga)$, and if equality holds, $f$ must be a transposition.
\end{proof}

The following example shows that \lref{Lm:rs0} is best possible. Let $\ga,\gb$
be defined by
\[
\begin{array}{c|ccccccccc}
\ga& 1& 2& 3& 4& 5& 6& 7& 8& 9\\
\hline
1& 1& 2& 3& 4& 5& 6& 7& 8& 9\\
2& 2& 3& 1& 5& 6& 4& 8& 9& 7\\
3& 3& 1& 2& 6& 4& 5& 9& 7& 8\\
4& 4& 5& 6& 7& 8& 9& \mk1& \mk2& \mk3\\
5& 5& 6& 4& 8& 9& 7& \mk2& \mk3& \mk1\\
6& 6& 4& 5& 9& 7& 8& \mk3& \mk1& \mk2\\
7& 7& 8& 9& \mk1& \mk2& \mk3& \mk4& \mk5& \mk6\\
8& 8& 9& 7& \mk2& \mk3& \mk1& \mk5& \mk6& \mk4\\
9& 9& 7& 8& \mk3& \mk1& \mk2& \mk6& \mk4& \mk5
\end{array}
\qquad
\begin{array}{c|ccccccccc}
\gb& 1& 2& 3& 4& 5& 6& 7& 8& 9\\
\hline
1& 1& 2& 3& 4& 5& 6& 7& 8& 9\\
2& 2& 3& 1& 5& 6& 4& 8& 9& 7\\
3& 3& 1& 2& 6& 4& 5& 9& 7& 8\\
4& 4& 5& 6& 7& 8& 9& \mk2& \mk3& \mk1\\
5& 5& 6& 4& 8& 9& 7& \mk3& \mk1& \mk2\\
6& 6& 4& 5& 9& 7& 8& \mk1& \mk2& \mk3\\
7& 7& 8& 9& \mk2& \mk3& \mk1& \mk5& \mk6& \mk4\\
8& 8& 9& 7& \mk3& \mk1& \mk2& \mk6& \mk4& \mk5\\
9& 9& 7& 8& \mk1& \mk2& \mk3& \mk4& \mk5& \mk6
\end{array}
\]
where the differences are shaded. Then $k=2n/3$ and yet the groups
are not isomorphic; $G(\ga)\cong (C_3)^2$ and $G(\gb)\cong C_9$.
By taking direct products of these two groups with other groups we
can make arbitrarily large non-isomorphic pairs where $k=2n/3$
and $r=s=0$.

\begin{lemma}\label{Lm:Profit1}
Suppose that $k=n-q+2$ and $x$, $y\in G\setminus K$, $x\ne y$. Then
there is $(v,w)\in \diff(\ga,\gb)$ such that $\{v,w,v\ga w\}\cap
\{x,y\}=\emptyset$, $v\in G\setminus K$, and either $w\in K$ or $v\ga
w\in K$.
\end{lemma}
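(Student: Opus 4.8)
The plan is to produce a pair $(v,w)$ with the stronger property $w\in K$, found by a pigeonhole argument that exploits the hypothesis $k=n-q+2$: this says exactly that $|G\setminus K|=n-k=q-2$. For any $v\in G\setminus K$ we have $\dist_v\ge\lceil n/3\rceil=q$, so among the $\ge q$ columns $w$ with $(v,w)\in\diff(\ga,\gb)$ at most $q-2=|G\setminus K|$ lie outside $K$; hence row $v$ has at least two differences $(v,w)$ with $w\in K$, and since $1\in H\subseteq K$ but $x,y\notin K$, each such $w$ is automatically distinct from $x$ and from $y$. Because $q-2\ge 3$, I may choose $v$ in the set $V:=(G\setminus K)\setminus\{x,y\}$; it then suffices to find a $K$-column $w$ of row $v$ with $v\ga w\notin\{x,y\}$, for then the triple $\{v,w,v\ga w\}$ misses $\{x,y\}$.

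Fix such a $v$. Left translation by $v$ in $G(\ga)$ is a bijection, so at most one $K$-column $w$ of row $v$ satisfies $v\ga w=x$ and at most one satisfies $v\ga w=y$; hence if row $v$ has three or more differences in $K$-columns we are done. So assume the lemma fails: then for every $v\in V$ row $v$ has exactly two differences in $K$-columns, and by the previous sentence the products of $v$ with those two columns are exactly $x$ and $y$. Counting the differences in row $v$ now gives $\dist_v=2+|\{w\notin K:(v,w)\in\diff(\ga,\gb)\}|\le 2+|G\setminus K|=q$; combined with $\dist_v\ge q$ this forces $\dist_v=q$ and, by equality, every element of $G\setminus K$ indexes a difference in row $v$.

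This rigidity is enough to reach a contradiction by showing that $V$ is closed under $\ga$. Let $v,v'\in V$, possibly equal. Then $(v,v')\in\diff(\ga,\gb)$, since $v'\in G\setminus K$ and every element of $G\setminus K$ indexes a difference in row $v$. If $v\ga v'\in K$ then $(v,v')$ is itself the desired pair, because $v\in G\setminus K$, $v\ga v'\in K$, and $\{v,v',v\ga v'\}\cap\{x,y\}=\emptyset$ (as $v,v'\in V$ and $v\ga v'\in K$), contradicting our assumption; hence $v\ga v'\in G\setminus K$. Moreover $v\ga v'\ne x$, since the unique column $w$ with $v\ga w=x$ lies in $K$ while $v'\notin K$; similarly $v\ga v'\ne y$. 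Thus $v\ga v'\in V$. Now pick any $v\in V$: all of $v,\,v\ga v,\,v\ga v\ga v,\dots$ lie in $V$, so some power of $v$ equals $1$, giving $1\in V$ and contradicting $1\in K$ together with $K\cap V=\emptyset$. Hence the lemma cannot fail.

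The genuine content is the last paragraph. A naive count of how many of the (at least two) $K$-column differences in a single row can be spoiled by $x$ or $y$ does not suffice, since that count is exactly $2$. One instead has to notice that once the lemma fails every row indexed by $V$ is as short as possible, which is what lets one feed the differences $(v,v')$ with $v,v'\in V$ into the dichotomy ``either $v\ga v'\in K$, and we win outright, or $v\ga v'\in V$'', the latter alternative collapsing because a nonempty subset of a group closed under multiplication must contain the identity. The preliminary steps are routine bookkeeping; identifying the failing configuration precisely enough to conclude $\dist_v=q$ for $v\in V$ is the only place care is needed.
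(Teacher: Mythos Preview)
Your proof is correct and shares the paper's key insight: the set $V=(G\setminus K)\setminus\{x,y\}$ cannot be closed under $\ga$ because $1\notin V$. The paper argues directly---pick $v,w\in L:=V$ with $v\ga w\notin L$, then split into the cases $v\ga w\in K$ and $v\ga w\in\{x,y\}$, handling the second by the same pigeonhole count you use. You instead assume failure, deduce the rigid structure $\dist_v=q$ and ``every $G\setminus K$ column is a difference'' for each $v\in V$, and then show $V$ is $\ga$-closed.

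Your organization actually buys something the paper's does not: when the paper lands in its first case ($v,w\in L$, $v\ga w\in K$) it simply declares ``we are done'' without checking that $(v,w)\in\diff(\ga,\gb)$, which is part of the conclusion. In your argument the analogous step (``if $v\ga v'\in K$ then $(v,v')$ is the desired pair'') is fully justified, because you have already established from the failure hypothesis that every column indexed by $G\setminus K$ is a difference in row $v$, so $(v,v')\in\diff(\ga,\gb)$ is automatic. So your contradiction framing is not just cosmetic---it makes the membership in $\diff(\ga,\gb)$ come for free at exactly the point where the direct argument needs an extra step.

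One small remark: you invoke $q-2\ge3$ (equivalently $|V|\ge1$) without comment. This is not a consequence of the stated hypothesis $k=n-q+2$ alone, and indeed the lemma is vacuous or false when $|G\setminus K|\le2$. It holds in the only places the lemma is applied (the quadruples with $n\in\{24,27\}$, where $q\ge8$), and the paper's proof tacitly needs the same thing, so this is not a defect in your argument relative to the paper's---just something worth flagging.
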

\begin{proof}
The set $L=G\setminus(K\cup\{x,y\})$ is not closed under $\ga$ since
it does not contain $1$, so there are $v$, $w\in L$ such that $v\ga
w\not\in L$. If $v\ga w\in K$, we are done. Otherwise $v\ga w\in
\{x,y\}$, and we can assume without loss of generality that $v\ga w=x$.
Since $v\in G\setminus K$, $\dist_v\ge q = n - k + 2$, but
$|(G\setminus K)\cup\{v^\ga\ga x,v^\ga\ga y\}|\le n-k+1$ (as $v^\ga\ga
x = v^\ga \ga v\ga w = w\in G\setminus K$), so there is $z\in K$ with
$(v,z)\in\diff(\ga,\gb)$, and $v\ga z\not\in\{x,y\}$. Then $\{v,z,v\ga
z\}\cap \{x,y\}=\emptyset$, $v\in G\setminus K$, $z\in K$, and $(v,z)$
does the job.
\end{proof}

We now eliminate all the quadruples of \eqref{Eq:Quadruples3}, sorting them according to the difference $n-k$.

\emph{Case $(n,h,k,m) = (25,1,17,3)$.} To eliminate this case, we need a profit of at least $\cst_0(n)-(n-k)q -(k-h)m + 1 = 13$, and we can assume $r\le 2$, $s\le 6$, $u\ge 34$. If $s>0$ and $(a,b)\in S$ then $|U\swr(a,b)|\ge u - (2n-2k+4) \ge 14$ by \lref{Lm:Uwr}, so there is $(c,d)\in U$ such that $a$, $b$, $a\ga b$, $c$, $d$, $c\ga d$ are distinct, yielding the profit of at least $(n-q-2m) + (n-2q-m)=14>13$.
We can therefore assume that $s=0$ and $u\ge 40$. By \lref{Lm:rs0}, $r>0$ and there is $(a,a)\in R$. Then $|U\swr(a,a)|\ge u-(2n-2k+1) \ge 23$ by \lref{Lm:Uwr}. Since $\mu_2(n-k)=\mu_2(8)=8\le \lceil 23/2\rceil$, there is a rainbow $2$-matching in $U$ disjoint from $\{a,a\ga a\}$, and we obtain a sufficient profit of at least $(n-2q-m+1) + 2(n-2q-m) = 13$.

\emph{Case $(n,h,k,m) = (27,1,19,3)$.} We need a profit of at least $19$, and we can assume $r\le 2$, $s\le 6$, $u\ge 40$. If $s>0$ and $(a,b)\in S$ then $|U\swr(a,b)|\ge u-(2n-2k+4)\ge 20$ by \lref{Lm:Uwr}, $\mu_2(n-k)=\mu_2(8)=8\le \lceil 20/2\rceil$, so there is a rainbow $2$-matching disjoint from $\{a,b,a\ga b\}$, yielding a sufficient profit of $(n-q-2m)+2(n-2q-m) = 24$. We can therefore assume that $s=0$ and $u\ge 46$. By \lref{Lm:rs0}, $r>0$ and there is $(a,a)\in R$. Then $|U\swr(a,a)|\ge u-(2n-2k+1)\ge 29$ by \lref{Lm:Uwr}. Since $\mu_2(n-k)=8\le \lceil 29/2\rceil$, there is a rainbow $2$-matching disjoint from $\{a,a\ga a\}$, and we obtain a sufficient profit of at least $(n-2q-m+1)+2(n-2q-m) = 19$.

\emph{Case $(n,h,k,m) = (24,1,17,3)$.} We need a profit of at least $17$, and we can assume $r\le 3$, $s\le 6$, $t\le 6$, $u\ge 33$. If $s>0$ and $(a,b)\in S$ then $|U\swr(a,b)|\ge 15$ by \lref{Lm:Uwr}, $\mu_2(n-k) = \mu_2(7) = 7 \le \lceil 15/2\rceil$, so there is a rainbow $2$-matching in $U$ disjoint from $\{a,b,a\ga b\}$, for a sufficient profit of at least $(n-q-2m)+2(n-2q-m)=20$. Similarly if $t>0$. We can therefore assume that $s=0$, $t=0$ and $u\ge 45$. There is $(a,a)\in R$ by \lref{Lm:rs0}, $|U\swr(a,a)| \ge 30$ by \lref{Lm:Uwr}, $\mu_3(n-k) = \mu_3(7) = 15 = \lceil 30/2\rceil$, so there is a rainbow $3$-matching in $U$ disjoint from $\{a,a\ga a\}$, giving a sufficient profit of at least $(n-2q-m+1)+3(n-2q-m) = 21$.

\emph{Case $(n,h,k,m) = (26,1,19,3)$.} We need a profit of at least $20$, and we can assume $r\le 6$, $s\le 6$, $t\le 6$, $u\ge 36$. If $s>0$ and $(a,b)\in S$ then $|U\swr(a,b)|\ge 18$ by \lref{Lm:Uwr}, $\mu_2(n-k) = \mu_2(7) = 7 \le \lceil 18/2\rceil$, so there is a rainbow $2$-matching in $U$ disjoint from $\{a,b,a\ga b\}$, for a sufficient profit of at least $(n-q-2m)+2(n-2q-m)=21$. Similarly if $t>0$. If $s=0=t$ then $u\ge 52$, a contradiction of \lref{Lm:U}, which yields $u\le 42$.

\emph{Case $(n,h,k,m) = (25,1,18,3)$.} We need a profit of at least $19$, and we can assume $r\le 3$, $s\le 6$, $t\le 6$, $u\ge 36$. Suppose that $s\ge 3$. If there are $(a,b)$, $(c,d)\in S$ such that $|\{a,b,c,d\}|=4$ then \lref{Lm:SquareS} yields a sufficient profit of at least $2(n-q-2m)=20$. Otherwise, as in the proof of \lref{Lm:s3}, there are $(a,b)$, $(c,d)\in S$ such that $|\{a,b,c,d\}|=3$ and $\pi(a,b,c,d,a\ga b,c\ga d)\ge 2n-3q-3m+1 = 15$. Moreover, \lref{Lm:Uwr2} implies that $|U\swr(a,b)(c,d)|\ge 3$, so there is $(e,f)\in U$ such that $\{e,f,e\ga f\}\cap \{a,b,c,d,a\ga b,c\ga d\}=\emptyset$. Since $\pi(e,f,e\ga f)\ge n-2q-m = 4$, we have $\pi\ge 15+4=19$, as desired. We can therefore assume that $s\le 2$ and $u\ge 40$. Using \lref{Lm:Uwr2} once more, we may now deduce that $t\le2$. Hence $u\ge 44$, contradicting $u\le 42$ from \lref{Lm:U}.

%Here is the similar argument, but I don't think we need it.
%If $t\ge 3$, there are $(a,b)$, $(c,d)\in T$ such that $\pi(a,b,c,d,a\ga b, c\ga d)\ge 2n-3q-3m+1=15$. Moreover, \lref{Lm:Uwr2} yields $|U\swr(a,b)(c,d)|\ge 4$, and we obtain $\pi\ge 19$ as above. We can therefore assume $t\le 2$ and $u\ge 44$, contradicting $u\le 42$ from \lref{Lm:U}.

\emph{Case $(n,h,k,m) = (27,1,20,3)$.} We need a profit of at least $25$, and we can assume $r\le 3$. Suppose that $s\ge 7$. Then by \lref{Lm:7S}, there are $(a,b)$, $(c,d)\in S$ such that $\pi(a,b,c,d,a\ga b,c\ga d)\ge 2(n-q-2m) = 24$. Using $(x,y) = (a\ga b,c\ga d)$ in \lref{Lm:Profit1}, we obtain $(v,w)\in\diff(\ga,\gb)$ such that $\{v,w,v\ga w\}\cap \{x,y\}=\emptyset$, $v\in G\setminus H$, and either $w\in K$ or $v\ga w\in K$. We have not yet used any of the rows $v$, $w$, $v\ga w$ that happen to be in $G\setminus K$ in our calculation of the profit. We have therefore counted at most $q + q + (q-1) = 3q-1$ differences on the rows $v$, $w$, $v\ga w$ so far, however, we have $\dist_v+\dist_w+\dist_{v\ga w}\ge n =3q$ because $(v,w)\in\diff(\ga,\gb)$. We can now increase the profit of $24$ by $1$, and we are done. Similarly, if $t\ge 7$, there are $(a,b)$, $(c,d)\in T$ such that $\pi(a,b,c,d,a\ga b,c\ga d)\ge24$ by \lref{Lm:RowT}, and we can apply \lref{Lm:Profit1} with $(x,y)=(b,d)$ to increase the profit by $1$. We can therefore assume $s\le 6$, $t\le 6$ and $u\ge 42$.
If $s\ge 3$, there are $(a,b)$, $(c,d)\in S$ with $\pi(a,b,c,d,a\ga b,c\ga d)\ge 2n-3q-3m+1 = 19$ by \lref{Lm:s3}, $|U\swr(a,b)(c,d)|\ge 6$ by \lref{Lm:Uwr2}, $(e,f)\in U$ with $\{e,f,e\ga f\}\cap \{a,b,c,d,a\ga b,c\ga d\}=\emptyset$, and $\pi(e,f,e\ga f)\ge n-2q-m = 6$, for a sufficient profit of $19+6=25$. We can therefore assume $s\le 2$ and $u\ge 46$, contradicting $u\le 42$ from \lref{Lm:U}.

\emph{Case $(n,h,k,m) = (24,1,18,3)$}. We need a profit of at least $22$.

Define $\lambda$ to be the maximum integer for which there exist distinct
$x,y\in G$ such that $\dist_x\ge\dist_y\ge\lambda$. Suppose that $\lambda\ge17$.
By \lref{Lm:Profit1} there is $(v,w)\in\diff(\ga,\gb)$ with
$\{v,w,v\ga w\}\cap\{x,y\}=\emptyset$ and $|K\cap\{w,v\ga w\}|\ge1$
so $\pi(v,w,v\ga w,x,y)\ge n-2q-m+2(\lambda-q)\ge23$. Thus we may assume that
$\lambda\le16$.

Let $\Omega$ be a maximal subset of $R\cup S\cup T$ under the constraint that
there should be a maximum of 3 elements of $\Omega$ within any row. Let
$\Sigma$ be the sum over $\Omega$ of $\dist_a+\dist_b-2m$ for elements
$(a,b)\in R\cup S$, and $\dist_a+\dist_{a\ga b}-2m$ for $(a,b)\in T$.

We claim that $\Sigma\ge|\Omega|(n-2m-\lambda)$. Each $(a,b)\in R\cup S$
satisfies
$\dist_a+\dist_b\ge n-\min\{\dist_{a\ga b},\dist_{a\gb b}\}\ge n-\lambda$.
So it suffices to show that each $(a,b)\in T$ satisfies
$\dist_a+\dist_{a\ga b}\ge n-\lambda$. Since $(a,b)\in T$, we have
$\dist_a+\dist_{a\ga b}\ge n-\dist_b$. By a disjunction trick, $(a,c)\in T$ where $c=a^\gb\gb(a\ga b)$, so
$\dist_a+\dist_{a\ga b}\ge n-\dist_c$.
Since $b$, $c$ are distinct elements of $G\setminus K$, we have
$\lambda\le\min\{\dist_b,\dist_c\}$, from which the claim follows.

Next we claim that $\Sigma\le8(37-2\lambda)$. Consider $a\in K\setminus H$.
By construction, $a$ is a row coordinate for at most 3 cells in $\Omega$.
By \lref{Lm:RowS}, there are at most 2 cells in $S$ for which $a$ is
the column coordinate, otherwise we realize a sufficient profit
of $3(n-2q-m+1)+q-m+1=24$. Similarly, using \lref{Lm:RowT}, there
are at most 2 cells $(c,d)$ in $T$ for which $a=c\ga d$. It is also
possible that $a$ is the column coordinate for a single cell in $R$.
It follows that $\Sigma\le 8\Sigma'$, where $\Sigma'$ is the sum over
$a\in K\setminus H$ of $\dist_a-m$. As the profit from $K\cup\{x,y\}$
is at least $\Sigma'+2(\lambda-q)$ we are done unless
$\Sigma'\le 21+2q-2\lambda=37-2\lambda$. This proves the claim.

Combining the previous two claims we find that
$|\Omega|\le8(37-2\lambda)/(n-2m-\lambda)=16+8/(18-\lambda)\le20$, since
$\lambda\le16$. As $\Omega\cup U$ contains three differences in every row
indexed by $K\setminus H$, it follows that $u\ge3(k-h)-|\Omega|\ge31$.
This contradicts \lref{Lm:U}, finishing the last case.

\section{Constructions}\label{Sc:Constructions}

We have now established all distances mentioned in Theorem \ref{Th:Main}. It remains to present the constructions that realize the minimal distances $\cst(\ga)=\dist(\ga,\gb)$ in situations when $\cst(\ga)<\cst_0(\ga)$.

\subsection{Cyclic and dihedral constructions}\label{Ss:Quarter}

The following two constructions \eqref{Eq:CyclicConstruction} and \eqref{Eq:DihedralConstruction} were introduced in \cite{DrConstr1}. Given a certain group $G(\ga)$ of even order $n$, they produce a group $G(\gb)$ at distance $n^2/4$ from $G(\ga)$.

Recall the graphs $\mathcal G(n)$ and $\mathcal G'(n)$ from the Introduction. It turns out that whenever two groups $G(\ga)$, $G(\gb)$ of order $n=8$ or $n=16$ are at distance $n^2/4$, there is a group $G(\gc)$ obtained from $G(\ga)$ by one of the two constructions and such that $G(\gb)\cong G(\gc)$. This follows from the fact that the graph $\mathcal G(8)$ (calculated in \cite{VoMS} and independently here) coincides with $\mathcal G'(8)$, and from the fact that the graph $\mathcal G(16)$ (calculated here for the first time) coincides with $\mathcal G'(16)$ (calculated by B\'alek \cite{Ba} and independently here).

For a fixed positive integer $m$ and the set $M = \{-m+1$, $-m+2$,
$\dots$, $m-1$, $m\}$, define $\sigma:\mathbb{Z}\to \{-1$, $0$, $1\}$ by
\begin{equation*}
    \sigma(i) = \left\{\begin{array}{ll}
        1,&i>m,\\
        0,&i\in M,\\
        -1,&i<1-m.
    \end{array}\right.
\end{equation*}

\emph{The cyclic construction.}
Let $G(\ga)$ be a group of order $n$, $S\unlhd G$, $G/S=\langle \alpha \rangle$ a cyclic group of order $2m$ and $1\ne h\in S\cap Z(G)$. Then $G(\ga)$ is the disjoint union $\bigcup_{i\in M}\alpha^i$, and we can define a new multiplication $\gb$ on $G$ by
\begin{equation}\label{Eq:CyclicConstruction}
    x\gb y = x\ga y\ga h^{\sigma(i+j)},
\end{equation}
where $x\in\alpha^i$, $y\in\alpha^j$, and $i$, $j\in M$. Then $G(\gb)$ is a group and $\dist(\ga,\gb)=n^2/4$.

\emph{The dihedral construction.}
Let $G(\ga)$ be a group of order $n$, $S\unlhd G$, $G/S$ a dihedral group of order $4m$ (where we allow $m=1$), and $\beta$, $\gamma$ involutions of $G/S$ such that $\alpha=\beta\gamma$ is of order $2m$. Let $G_0=\bigcup_{i\in M}\alpha^i$ and $G_1=G\setminus G_0$. Let $1\ne h\in S\cap Z(G_0)$ be such that $hxh=x$ for some (and hence every) $x\in G_1$. Then there are $e\in\beta$ and $f\in\gamma$ so that $G$ is the disjoint union $\bigcup_{i\in M}(\alpha^i\cup e\alpha^i)$ or $\bigcup_{j\in M}(\alpha^j\cup \alpha^jf)$, and we can define a new multiplication $*$ on $G$ by
\begin{equation}\label{Eq:DihedralConstruction}
    x\gb y=x\ga y\ga h^{(-1)^r\sigma(i+j)},
\end{equation}
where $x\in\alpha^i\cup e\alpha^i$, $y\in(\alpha^j\cup\alpha^jf)\cap G_r$, $i$, $j\in M$, and $r\in\{0,\,1\}$. Then $G(\gb)$ is a group and $\dist(\ga,\gb)=n^2/4$.

\subsection{Other constructions}\label{Ss:Other}

The following three constructions furnish the distances of
\tref{Th:Main} with $\dist(\ga,\gb)<\cst_0(\ga)$ and $n\ne 2^k$.

\emph{Construction 1.} Suppose $n\equiv 2\bmod4$ and $n\ge6$.  Let $O$ be an abelian group of order $n/2$. We have two groups defined on the set $O\times C_2$, namely $D(O)$ and the usual direct product on $O\times C_2$. The distance between these two groups is $n(n-2)/2$. When $n\in\{6,10\}$, this is $\cst(D_n)$ so $\cnb(D_n)$ contains a group isomorphic to $C_{n/2}\times C_2\cong C_n$ (although $\cnb(D_{10})$ also contains a group isomorphic to $D_{10}$, because $n(n-2)/2=6n-20=\delta_0(D_{10})$).

\def\grone{\odot} \def\grtwo{\circledast}

\emph{Construction 2.} We construct two abelian group operations $\grone$, $\grtwo$ on the set $C_a\times C_b$ where $a$ is odd.
\[
(s,t)\grone(u,v)=
\begin{cases}
(s+u,t+v+1)&\text{if $s+u\ge a$}\\
(s+u,t+v)&\text{otherwise}.
\end{cases}
\]
Clearly $\grone$ is isomorphic to $C_{ab}$ by the map $(s,t)\mapsto s+at$.

To form $\grtwo$ we take the usual group on $C_a\times C_b$ and apply the isomorphism
\[
(s,t)\mapsto\begin{cases}
(s,t+1)&\text{if $s\ge\frac12(a+1)$}\\
(s,t)&\text{otherwise}.
\end{cases}
\]
It is routine to check that $d(\grone,\grtwo)=n^2(1-a^{-2})/4$. In particular, $d(\grone,\grtwo)=2n^2/9$ when $a=3$, the nearest (proportional) distance between non-isomorphic groups \cite{ILY}. Note that $2n^2/9<\delta_0(n)$ for $n\le 21$, and indeed $\cst(C_{3b})=2n^2/9$ for $2\le b\le 7$. The above construction proves this for $b\in\{2,4,5,7\}$.

Construction 2 shows directly that the following achieve $2n^2/9$:
\begin{align*}
&\dist(C_6,C_6),
\dist(C_9,C_3^2),
\dist(C_{12},C_{12}),
\dist(C_{15},C_{15}),\\
&%\dist(C_{18},C_{18}),  %% NO!
\dist(C_{18},C_6\times C_3),
\dist(C_{21},C_{21}).
\end{align*}
Taking appropriate extensions of the example that realises $\dist(C_6, C_6)$,
we can show that $2n^2/9$ is also achieved in these cases:
\begin{align*}
&\dist(C_6\times C_2, C_6\times C_2),
\dist(C_6\times C_3, C_6\times C_3),
\dist(D_{12},D_{12}),\dist(\Dic_{12},\Dic_{12}).
\end{align*}
Similarly, $\dist(D_{18},(C_3)^2\sdp C_2)$ is achieved by an extension
of the example that yields $\dist(C_9,(C_3)^2)$.
The above is a complete catalogue of cases where
two groups are at distance precisely $2n^2/9$, except for the ad hoc
constructions for $\dist(C_9,C_9)$, $\dist(C_{18},C_{18})$
and $\dist(D_{18},D_{18})$ below.

Construction 2 can also be used directly to realize $\dist(C_{10},C_{10})$ and
$\dist(C_{14},C_{14})$.

\emph{Construction 3. (Ad hoc)}

$\dist(C_7,C_7)$: The distance between $C_7=\{0,\dots,6\}$ and its $(12)(56)$
isomorph is $18$.

$\dist(C_9,C_9)$: The distance between $C_9=\{0,\dots,8\}$ and its $(36)(47)(58)$
isomorph is $2n^2/9=18$.

Appropriate extensions of this last example realize
both $\dist(C_{18},C_{18})$ and $\dist(D_{18},D_{18})$.

\begin{remark} The computer calculations used in this paper were as follows: The graphs $\mathcal G'(8)$ and $\mathcal G'(16)$ were calculated by the first author using the \texttt{GAP} \cite{GAP} package \texttt{LOOPS} \cite{LOOPS} and modified code from \cite{VoEJC}. The inequalities of Section \ref{Sc:Inequalities} were independently verified by both authors, resulting in the list \eqref{Eq:Quadruples}. The algorithm for $m=2$ of Subsection \ref{Ss:m2} was implemented by both authors independently, and so was the algorithm for distances of cyclic groups of Subsection \ref{Ss:Cyclic}. The general algorithm for $\dist([\ga],[\gb])$ was run by the second author for all $n\le 22$ (which took several months on a single processor computer), and by the first author for $n\le 15$. Both authors verified the values $\mu_3(6)$--$\mu_3(10)$ of Proposition \ref{Pr:Graph} with independent programs. Finally, the upper bounds on $r$, $s$, $t$ and lower bounds on $u$ of Section \ref{Sc:ApplyGraph} were also performed independently by the two authors.
\end{remark}

%%%  Squashing the bibliography together a bit
%  \let\oldthebibliography=\thebibliography
%  \let\endoldthebibliography=\endthebibliography
%  \renewenvironment{thebibliography}[1]{%
%    \begin{oldthebibliography}{#1}%
%      \setlength{\parskip}{0.4ex plus 0.1ex minus 0.1ex}%
%      \setlength{\itemsep}{0.4ex plus 0.1ex minus 0.1ex}%
%  }%
%  {%
%    \end{oldthebibliography}%
%  }

\end{document}